\DeclareSymbolFont{cyrletters}{OT2}{wncyr}{m}{n}
\DeclareMathSymbol{\Sha}{\mathalpha}{cyrletters}{"58}
\newcommand{\cdotspace}{\hspace{-1.5pt}\cdot\hspace{-1.5pt}}
\newcommand{\ord}{\mathrm{ord}}
\newcommand{\res}{\textup{res}}
\newcommand{\sst}{\mathrm{st}}
\newcommand{\tupH}{\textup{H}}
\definecolor{Green}{rgb}{0.0, 0.5, 0.0}
\newcommand{\p}{\mathfrak{p}}
\numberwithin{equation}{section}
\begin{document}

\title[Bianchi Stark--Heegner cycles]{Stark--Heegner cycles attached to Bianchi modular forms}

\begin{abstract}
Let $f$ be a Bianchi modular form, that is, an automorphic form for $\mathrm{GL}(2)$ over an imaginary quadratic field $F$, and let $\pri$ be a prime of $F$ at which $f$ is new. Let $K$ be a quadratic extension of $F$, and $L(f/K,s)$ the $L$-function of the base-change of $f$ to $K$. Under certain hypotheses on $f$ and $K$, the functional equation of $L(f/K,s)$ ensures that it vanishes at the central point. The Bloch--Kato conjecture predicts that this should force the existence of non-trivial classes in an appropriate global Selmer group attached to $f$ and $K$. In this paper, we use the theory of double integrals developed by Barrera Salazar and the second author to construct certain $\pri$-adic Abel--Jacobi maps, which we use to propose a construction of such classes via \emph{Stark--Heegner cycles}. This builds on ideas of Darmon and in particular generalises an approach of Rotger and Seveso in the setting of classical modular forms.
\end{abstract}

\author{Guhan Venkat}
\address{Guhan Venkat\newline
Morningside Center of Mathematics\\
Academy of Mathematics and Systems Science\\
Chinese Academy of Sciences\\
Beijing, 100190\\
China}
\email{guhan@amss.ac.cn}

\author{Chris Williams}
\address{Chris Williams\newline 
University of Warwick, Mathematics Institute, Zeeman Building, CV4 7AL, UK}
\email{christopher.d.williams@warwick.ac.uk}

\thanks{The authors' research was partially supported by the NSERC Discovery Grants Program 05710, a CRM-Laval postdoctoral fellowship (Venkat) and by a Heilbronn research fellowship (Williams).}
\subjclass[2010]{11F41, 11F67, 11F85, 11S40}
\keywords{Bianchi modular forms, Stark--Heegner cycles, $\p$-adic Abel-Jacobi maps, $\mathcal{L}$-invariant}
\maketitle

\setcounter{tocdepth}{1}
\renewcommand{\baselinestretch}{0.4}
\tableofcontents
\renewcommand{\baselinestretch}{1.0}\normalsize


\section{Introduction} \label{sec:intro}
The Birch and Swinnerton-Dyer (BSD) conjecture predicts that if $E/\QQ$ is an elliptic curve, then its analytic rank (the order of vanishing of its $L$-function at $s=1$) and algebraic rank (the $\ZZ$-rank of its group of rational points) coincide, providing a deep relationship between arithmetic and analysis. In cases where the $L$-function is known to vanish at $s=1$, it is natural to then try to construct points of infinite order on the curve. When the analytic rank is precisely 1, an example of this is given by the system of \emph{Heegner points} \cite{GZ86}, which were a key input to the celebrated proof of BSD in this case \cite{Kol88}. Extensive efforts have since been made to search for points that could play a similar role in higher-rank settings. A particularly promising avenue of research has been to use $p$-adic methods, and in \cite{Dar01} Darmon gave a construction of \emph{Stark--Heegner points}, $p$-adic points on the curve that are conjecturally global and satisfy reciprocity laws under Galois automorphisms. 

More generally, we can replace $E$ with an automorphic Galois representation $\rho$ of a number field $K$, and BSD with the \emph{Bloch--Kato conjectures}, which similarly predict that the order of vanishing of the $L$-function of $\rho$ at critical values should equal the rank of a Selmer group attached to $\rho$. Any proof of this statement is likely to hinge on the construction of explicit classes in this Selmer group in cases where the $L$-function is known to vanish. In this paper, we give a conjectural construction of such classes in the case where $\rho$ is attached to a modular form over an imaginary quadratic field $F$ (a \emph{Bianchi modular form}) and $K$ is a quadratic extension of $F$. The construction uses $p$-adic methods in the style of Darmon; in particular, we also conjecture a reciprocity law under Galois automorphisms. This generalises earlier work of Trifkovi\'c \cite{Tri06} and Rotger--Seveso \cite{RS12}, who conjectured such constructions for elliptic curves over $F$ and classical modular forms respectively.

\subsection{Set-up}
Let $p$ be a rational prime and fix embeddings $\iota_{\infty} : \overline{\QQ} \rightarrow \mathbb{C}$ and $\iota_{p}: \overline{\QQ} \rightarrow \overline{\QQ}_{p}$. Let $F$ be an imaginary quadratic field with ring of integers $\mathcal{O}_{F}$ such that $p$ is unramified in $F$, and let $\mathfrak{p}$ be the prime above $p$ in $\mathcal{O}_{F}$ corresponding to $i_{p}$. For simplicity of notation\footnote{We comment more on this assumption in \S\ref{sec:class number} below.}, let $F$ have class number one. Let $f \in S_{k,k}(U_{0}(\mathcal{N}))$ be a \emph{Bianchi cuspidal eigenform} of (parallel) even weight $k$ and square-free level $\cN = \pri\cM \subseteq \cO_{F}$, where $\mathfrak{p} \nmid \cM$, such that $f$ is new at $\pri$; see \S\ref{sec:bianchi} below for more details on our conventions. Throughout we will let $L/\Qp$ denote a (sufficiently large) coefficient field.

Let $K/F$ be a number field which is unramified at $\mathfrak{p}$ and let $V_{p}(f)$ denote the two-dimensional representation of $G_{F}:=\Gal(\overline{\QQ}/F)$ attached to $f$, taking values in a finite extension $L$ of $\QQ_p$ that contains the Hecke eigenvalues of $f$ under $\iota_p$ (see \S\ref{subsec:fontainemazur} and the references mentioned therein). This representation is conjectured to be semistable, and assuming this we consider the (semistable) \emph{Bloch--Kato Selmer group} over $K$, denoted $\textup{Sel}_{\sst}(K, V_{p}(f))$, the definition of which we recall. For every place $\nu$ of $K$, let $\tupH^{1}_{\sst}(K_{\nu}, V_{p}(f))$ be the kernel of the map \begin{equation*}  \tupH^{1}(K_{\nu}, V_{p}(f)) \rightarrow 
\begin{cases} 
\tupH^{1}(K_{\nu}^{\textup{ur}}, V_{p}(f))                    , & \text{if } \nu \nmid \mathfrak{p} \\
\tupH^{1}(K_{\nu}, B_{\sst} \otimes V_{p}(f)), & \text{if } \nu \mid \mathfrak{p}
\end{cases}
\end{equation*} where $B_{\sst}$ is Fontaine's ring of periods. The (semistable) Bloch--Kato Selmer group is then defined as 
\[\textup{Sel}_{\sst}(K, V_{p}(f)) := \ker\Big( \tupH^{1}(K, V_{p}(f)) \xrightarrow{\Pi\res_{\nu}} \frac{\tupH^{1}(K_{\nu}, V_{p}(f))}{\tupH^{1}_{\sst}(K_{\nu}, V_{p}(f))} \Big).\]

Let $\mathcal{M}_{f}$ be the (conjectural) rank 2 \emph{Clozel motive} defined over $F$ attached to the Bianchi eigenform $f$ (cf. \cite{Clo90}) and let $\textup{CH}^{\frac{k+2}{2}}(\mathcal{M}_{f} \otimes K)_{0}$ denote the Chow group of codimension $\frac{k+2}{2}$ null-homologous cycles on $\mathcal{M}_{f}$ base-changed to $K$. For each finite $\nu$, the $\mathfrak{p}$-adic Abel--Jacobi map then induces the commutative diagram
\begin{equation} \label{eqn:p-adicabeljacobi}
\begin{tikzcd}
\textup{CH}^{\frac{k+2}{2}}(\mathcal{M}_{f} \otimes K)_{0} \arrow{d}{\res_{\nu}} \arrow{r}{\textup{cl}^{\frac{k+2}{2}}_{1,K}} & \textup{Sel}_{\sst}(K, V_{p}(f)(\frac{k+2}{2})) \arrow{d} \\
\textup{CH}^{\frac{k+2}{2}}(\mathcal{M}_{f} \otimes K_{\nu})_{0} \arrow{r}{\textup{cl}^{\frac{k+2}{2}}_{1,K_{\nu}}} & \tupH^{1}_{\sst}(K_{\nu}, V_{p}(f)(\frac{k+2}{2})).
\end{tikzcd}
\end{equation} 
The Beilinson--Bloch--Kato conjectures then predict that the morphism $\textup{cl}_{1,K}^{\frac{k+2}{2}}$ is an isomorphism and that
\[ 
\textup{dim}_{L}\left(\textup{Sel}_{\sst}\left(K, V_{p}(f)\left(\tfrac{k+2}{2}\right)\right)\right) = \text{ord}_{s=\frac{k+2}{2}}L(f/K,s), 
\]
where $L(f/K,s)$ is the $L$-function of the base-change of $f$ to $K$. 

For the rest of this article we will fix a quadratic extension $K/F$ of relative discriminant $\mathcal{D}_{K}$ prime to $\mathcal{N}$. In this situation, the sign of the functional equation of $L(f/K,s)$ is given by $(-1)^{\#S(\mathcal{N},K)}$ (see \cite[Eqn. (2)]{Vat04}), where 
\[ S(\mathcal{N},K) := \{\text{finite places }\nu : \nu\text{ is inert in }K \}. \]

We will further assume that $K$ satisfies the \emph{Stark--Heegner hypothesis}, i.e.
\begin{itemize}
\item $\mathfrak{p}$ is inert in $K$,
\item all primes $\mathfrak{l} \mid \cM$ split in $K$. 
\end{itemize}

In particular, the Stark--Heegner hypothesis forces $S(\mathcal{N},K) = \{\pri\}$, hence forces the sign of the functional equation to be $-1$, and hence forces the vanishing of the central critical $L$-value of the base-change $L$-function, i.e.\ $L(f/K, \frac{k+2}{2}) = 0$. 

This allows us to force higher orders of vanishing over ring class extensions. Let $\mathcal{C} \subseteq \mathcal{O}_{F}$ be any ideal relatively prime to $\mathcal{N}\mathcal{D}_{K}$ and let 
\[ \mathcal{O}_{\mathcal{C}} := \mathcal{O}_{F} + \mathcal{C}\mathcal{O}_{K} \]
be the \emph{$\mathcal{O}_{F}$-order of conductor} $\mathcal{C}$ in $K$. By global class field theory, we know that the Galois group $G_{\mathcal{C}} := \Gal(H_{\mathcal{C}}/K)$ is isomorphic to the Picard group $\textup{Pic}(\mathcal{O}_{\mathcal{C}})$, where $H_{\mathcal{C}}/K$ is the ring class field of conductor $\mathcal{C}$. For any character $\chi : G_{\mathcal{C}} \rightarrow \mathbb{C}^{\times}$, the sign of the functional equation of the twisted $L$-function $L(f/K, \chi, s)$ is again $-1$. Moreover, the $L$-function admits a factorisation
\[ L(f/H_{\mathcal{C}},s) = \prod\limits_{\chi \in G_{\mathcal{C}}^{\vee}} L(f/K,\chi,s) \]    
over the characters of $G_{\mathcal{C}}$, and it follows that
\[ \textup{ord}_{s=\frac{k+2}{2}}L(f/H_{\mathcal{C}},s) \geq h(\mathcal{O}_{\mathcal{C}}) := |G_{\mathcal{C}}|. \] 
The Bloch--Kato conjecture predicts the existence of a family of non-trivial cohomology classes
\[ 
\bigg\{ s_{\mathcal{C}} \in \textup{Sel}_{\sst}\bigg(H_{\mathcal{C}}, V_{p}(f)\left(\tfrac{k+2}{2}\right)\bigg) \bigg\} \]
over towers of class fields $H_{\mathcal{C}}/K$ for $\mathcal{C}$ relatively prime to $\mathcal{N}\mathcal{D}_{K}$. The main aim of this paper is a proposal of conjectural candidates for such a family of classes via \emph{Stark--Heegner cycles}.

\subsection{Stark--Heegner cycles}
To construct our proposed classes, we build a machine that takes as input certain homology classes, the Stark--Heegner cycles, and outputs classes in a local Selmer group. We first describe the input to the machine.

Let $\mathcal{R}$ be the Eichler $\mathcal{O}_{F}[1/\mathfrak{p}]$-order in $\textup{M}_{2}(\mathcal{O}_{F}[1/\mathfrak{p}])$ of matrices that are upper triangular modulo $\mathcal{M}$ and let $\Gamma := \mathcal{R}_{1}^{\times}$ be the set of invertible matrices of $\mathcal{R}$ of determinant $1$. The starting point for our constructions is the space 
\[
(\Delta_{0} \otimes \textup{Div}(\mathcal{H}_{\mathfrak{p}}^{\textup{ur}}) \otimes V_{k,k})_{\Gamma},
\]
 where $\Delta_{0} := \textup{Div}^{0}(\mathbb{P}^{1}(F))$, $\textup{Div}(\mathcal{H}_{\mathfrak{p}}^{\textup{ur}})$ denotes the subgroup of divisors supported on $\mathbb{P}^{1}(\mathbb{Q}_{p}^{\textup{ur}})\setminus\mathbb{P}^{1}(F_{\mathfrak{p}})$ that are fixed by the action of ${\textup{Gal}(\mathbb{Q}_{p}^{\textup{ur}}/L^{0})}$ (where $L^{0} := L \cap \mathbb{Q}_{p}^{\textup{ur}}$), and $V_{k,k} := V_{k} \otimes V_{k}$ where $V_{k}$ is the ring of homogenous polynomials of degree $k$ in two variables with coefficients in $L$. This space should be regarded as an explicit substitute for the local Chow group. 
 
The \emph{Stark--Heegner} (or \emph{Darmon}) cycles are a family of elements
\[ 
\mathrm{D}_{\Psi} \in (\Delta_{0} \otimes \textup{Div}(\mathcal{H}_{\mathfrak{p}}^{\textup{ur}}) \otimes V_{k,k})_{\Gamma} 
\]
attached to optimal embeddings $\Psi : \mathcal{O}_{\mathcal{C}} \hookrightarrow \mathcal{R}$, where $\mathcal{O}_{\mathcal{C}}$ is an $\mathcal{O}_{F}[1/\mathfrak{p}]$-order in $K$ of conductor $\mathcal{C}$ prime to $\mathcal{N}\mathcal{D}_{K}$. A precise definition is contained in \S\ref{sec:stark-heegnercycles}.

\subsection{Monodromy modules and $\pri$-adic Abel--Jacobi maps}
 We describe the machine, which broadly breaks into two pieces: an analytic half, via $\pri$-adic Abel--Jacobi maps from cycles to monodromy modules; and an algebraic half, where we use $p$-adic Hodge theory and the Bloch--Kato exponential map to pass from  monodromy modules to Selmer groups.

 	\subsubsection{$\pri$-adic integration}
 	In \S\ref{sec:integration}, we develop the analytic half of this machine. We use a theory of $\mathfrak{p}$-adic integration inspired by Teitelbaum's construction in \cite{Teitelbaum1990}. We remark that our approach, using results of \cite{BW19}, follows the ``modular symbol theoretic'' $\mathfrak{p}$-adic integration of Seveso (see \cite{Sev1}) rather than the more general, but slightly less explicit, ``cohomological theoretic'' $\mathfrak{p}$-adic integration of Rotger and Seveso pursued in \cite{RS12}. 
 
\paragraph*{} Using the fact that $f$ is $\pri$-new, the results of \cite{BW19} allow us to extend the overconvergent Bianchi modular symbols of \cite{Wil17} over the Bruhat--Tits tree $\mathcal{T}_{\pri}$ for $\GLt/F_{\pri}$, giving a space which we denote $\mathbf{MS}_\Gamma(L)$. Our integration theory yields pairings
\[\Phi^{\mathrm{log}_{p},\sigma}, \Phi^{\mathrm{ord}_{\pri}} :  (\Delta_{0} \otimes \textup{Div}^{0}(\mathcal{H}_{\mathfrak{p}}^{\textup{ur}}) \otimes V_{k,k})_{\Gamma} \ \otimes \ \mathbf{MS}_\Gamma(L) \longrightarrow L\]
associated to each embedding $\sigma:F_{\pri}\hookrightarrow L$. Inspired by this, in \S4 we construct a filtered Frobenius monodromy module $\mathbf{D}_f \in \textup{MF}(\varphi, N, F_{\pri}, L)$, the category of filtered Frobenius monodromy modules over $F_{\pri}$ with coefficients in $L$. Fixing an embedding $\sigma : F_{\pri}\hookrightarrow L$, our $\mathfrak{p}$-adic integration can then be interpreted as a map
\[ 
\Phi_{f}\defeq -\Phi_{f}^{\log_{p},\sigma}\oplus\Phi^{\ord_{\pri}}_{f} : (\Delta_{0} \otimes \textup{Div}^{0}(\mathcal{H}_{\mathfrak{p}}^{\textup{ur}}) \otimes V_{k,k})_{\Gamma} \rightarrow \mathbf{D}_{f,L},
\]
where $\mathbf{D}_{f,L} := \mathbf{D}_f \otimes_{F_{\pri} \otimes L} L$. Following \cite{Sev1} and \cite{Dar01},  in Theorem~\ref{thm:padicabeljacobiimageofdarmoncycles} we lift the above morphism to the \emph{$\pri$-adic Abel--Jacobi map}
\[ \Phi^{\textup{AJ}} : (\Delta_{0} \otimes \textup{Div}(\mathcal{H}_{\mathfrak{p}}^{\textup{ur}}) \otimes V_{k,k})_{\Gamma} \rightarrow \mathbf{D}_{f,L}/\textup{Fil}^{\frac{k+2}{2}}(\mathbf{D}_{f,L}), \]
 removing the degree zero condition on $\mathrm{Div}(\uhp_{\pri}^{\mathrm{ur}})$. 
 \begin{remark}
 	Throughout the construction, we work only at a particular prime $\pri$ above $p$. When $p$ is split, this means we use the ``partially overconvergent'' coefficients of \cite[\S6]{Wil17}: that is, we use coefficients that are overconvergent at $\pri$, but classical at $\pribar$. This means we do not require any conditions on $f$ at the conjugate $\pribar$.
 \end{remark}

\subsubsection{Fontaine--Mazur theory and Bloch--Kato exponential maps}
The second half of the machine is more algebraic in nature. Recalling the assumption that $V_{p}(f)$ is semistable, let $\mathbb{D}_{f} := \mathbb{D}_{\textup{st}}(V_{p}(f))$ be the admissible filtered $(\varphi,N)$ module over $F_{\mathfrak{p}}$ attached to the local Galois representation $V_{p}(f)|_{G_{F_{\mathfrak{p}}}}$. The bridge between the analytic and algebraic theory is (conjecturally) provided by the \textit{trivial zero conjecture} (Conjecture~\ref{conj:trivialzeroconjecture}), that predicts an equality of analytic and algebraic $\mathcal{L}$-invariants, which would give an isomorphism 
\[
	\frac{\mathbf{D}_{f,L}}{\mathrm{Fil}^{\frac{k+2}{2}}(\mathbf{D}_{f,L})} \cong \frac{\mathbb{D}_{f,L}}{\mathrm{Fil}^{\frac{k+2}{2}}(\mathbb{D}_{f,L})}
\]
as explained in Theorem~\ref{thm:identifcationofmonodromymodules} and (\ref{eqn:identifcationofmonodromymodules}). Here $\mathbb{D}_{f,L} := \mathbb{D}_{f} \otimes_{F_{\pri} \otimes L} L$. Further, we have an identification
\begin{equation} \label{eqn:blochkato}
	\frac{\mathbb{D}_{f,L}}{\textup{Fil}^{\tfrac{k+2}{2}}(\mathbb{D}_{f, L})} \cong \textup{H}^{1}_{\textup{st}}\left(L, V_{p}(f)\big(\tfrac{k+2}{2}\big)\right) 
\end{equation}
afforded by the \emph{Bloch--Kato exponential map}. In particular, assuming the conjectural equality of $\mathcal{L}$-invariants, the complete machine is given by the composition
\begin{align} \label{eqn:abel-jacobi-intro}
(\Delta_{0} \otimes \textup{Div}(\mathcal{H}_{\mathfrak{p}}^{\textup{ur}}) \otimes V_{k,k})_{\Gamma} \xrightarrow{\Phi^{\textup{AJ}}} &\frac{\mathbf{D}_{f,L}}{\textup{Fil}^{\tfrac{k+2}{2}}(\mathbf{D}_{f,L})}\notag \\
 \cong &\frac{\mathbb{D}_{f,L}}{\textup{Fil}^{\tfrac{k+2}{2}}(\mathbb{D}_{f,L})} \cong \textup{H}^{1}_{\textup{st}}\left(L, V_{p}(f)\big(\tfrac{k+2}{2}\big)\right). 
\end{align}

\begin{remark}
	Note that the Galois structure we conjecture on the analytic monodromy module, via the trivial zero conjecture, is a natural one to consider. In particular, the main result of \cite{BW19} related the analytic $\mathcal{L}$-invariant we consider in this paper to the arithmetic of the standard $L$-function of the Bianchi form $f$, that is, the $L$-function of its standard Galois representation $V_p(f)$, to which the algebraic $\mathcal{L}$-invariant should be attached. Moreover the conjecture is true for base-change forms. We comment further on this in \S\ref{sec:evidence} below.
\end{remark}

\subsection{Global conjectures for Stark--Heegner cycles}

Let $s_{\Psi} \in \textup{H}^{1}_{\textup{st}}(L, V_{p}(f)(\frac{k+2}{2}))$ be the image of the Stark--Heegner cycle $\mathrm{D}_{\Psi}$ under the map (\ref{eqn:abel-jacobi-intro}). Assuming $L \supseteq K_{\pri}$, we have an inclusion $\iota_p : H_{\mathcal{C}} \hookrightarrow L$ (since $\mathfrak{p}$ splits completely in $H_{\mathcal{C}}$, the ring class field for $\mathcal{O}_{\mathcal{C}}$) that induces 
\begin{equation} \label{eqn:restriction}
	\textup{res}_{\mathfrak{p}} : \textup{Sel}_{\sst}\bigg(H_{\mathcal{C}}, V_{p}\left(\tfrac{k+2}{2}\right)\bigg) \rightarrow \tupH^{1}_{\sst}\bigg(L, V_{p}\left(\tfrac{k+2}{2}\right)\bigg). 
\end{equation}
In \S\ref{sec:stark-heegnercycles}, we formulate conjectures (see Conjecture~\ref{conj:mainconjecture}) that predict:
\begin{itemize}
\item[(a)] the existence of global classes $\mathcal{S}_{\Psi} \in \textup{Sel}_{\sst}(H_{\mathcal{C}}, V_{p}(\frac{k+2}{2}))$ such that \[
	\textup{res}_{\mathfrak{p}}(\mathcal{S}_{\Psi}) = s_{\Psi},
\] 
\item[(b)] and a \emph{Shimura reciprocity law} describing the Galois action of $G_{\mathcal{C}}$ on these global classes.
\end{itemize}
 These are direct analogues of conjectures of Darmon on Stark--Heegner points on elliptic curves over the rationals \cite{Dar01}. Further, we formulate Gross--Zagier type formulae (Conjecture~\ref{conj:grosszagier}) relating these classes to the derivative of the base-change $L$-function $L(f/K,s)$ at $s=\frac{k+2}{2}$.   

\subsection{Evidence}\label{sec:evidence}
Our main conjecture itself relies on the validity of two other conjectures. We briefly comment on theoretical and computational evidence for each of these, which lead us to believe Conjecture \ref{conj:mainconjecture} is a reasonable one to make.

The first conjecture we require is the semistability of the standard Galois representation $V_p(f)$ attached to the Bianchi modular form $f$. This is a standard conjecture that is widely believed and which may be within reach in the near future given recent developments in the study of such Galois representations. Moreover it is true for base-change forms, which can be deduced from the validity of the analogous result for classical modular forms. 

The second is the trivial zero conjecture for Bianchi modular forms, or more precisely an equality of analytic and algebraic $\mathcal{L}$-invariants attached to $V_p(f)$. In the base-change case, we prove this in Lemma~\ref{lem:basechangetrivialzero}. In the general case, this conjecture directly generalises a conjecture of Trifkovi\'c in weight 2, which is backed by extensive computational evidence when $p$ is split \cite{Tri06,GMS15}. Further, we can consider the analogous statements over totally real fields, where semistability is known. Over $\Q$, the various $\mathcal{L}$-invariants attached to the usual Galois representation of a modular form are all known to be equal (see, for example, \cite{BDI10}). Similarly, in the case of Hilbert modular forms, the direct analogue of our analytic $\mathcal{L}$-invariant was constructed in \cite{Spi14,Ber17}, and is known to be equal to the algebraic one \cite{Mok09,BDJ17}.

Finally, we comment on Conjecture \ref{conj:mainconjecture}. It would be very interesting to obtain computational evidence towards this conjecture, though it appears difficult to make the machine outlined above, and the Selmer group in which our classes live, completely explicit. Despite this, there are special cases that have been studied previously. If, for example, $E/F$ is a modular elliptic curve corresponding to a weight 2 Bianchi modular form $f$, then the Selmer group can be described much more concretely via the group of rational points on $E$. In this case, our conjecture degenerates to that of Trifkovi\'c \cite{Tri06}, for which again there is significant computational evidence \cite{GMS15}.

\subsection{Remarks on higher class number}\label{sec:class number}
The assumption that $F$ has class number 1 is made only to simplify notation. Indeed, the main reason we assume it is that it was present throughout in the results of \cite{BW19}, which we use crucially in \S\ref{sec:integration}. As explained in \S11.1 \emph{op.\ cit}., this assumption was made because it significantly simplified the notation. The methods of that paper are classical (rather than adelic); thus in the case of class number $h$, one must consider $h$ copies of the Bruhat--Tits tree, and the direct sum of $h$ copies of modular symbols, and the Hecke operators at non-principal primes permute these summands. The higher class number case would be better treated adelically as in G\"{a}rtner \cite{Gar12}.

In the present paper, beyond the dependence on \cite{BW19}, the only places where we use the class number 1 assumption are in Harder's  Eichler--Shimura isomorphism \eqref{eqn:eichlershimuraharder} and in \S\ref{sec:construction of homology classes}, where we fix a generator of the discriminant. Both could be dealt with by instead considering a direct sum of modular symbol spaces over the class group, on which non-principal ideals act by permutation. This approach -- at the cost of notational strain -- was the one taken, for example, in \cite{Wil17}.

\subsection{Comparison to relevant literature}
Our conjectures should be viewed as adding to a significant body of work in this direction. We have already mentioned the work of Trifkovi\'c and Rotger--Seveso above, both themselves natural generalisations of work of Darmon \cite{Dar01}. In other settings, similar constructions of Stark--Heegner/Darmon points have been proposed by several authors, on Modular Jacobians, Shimura curves, and elliptic curves over arbitrary number fields; see for instance \cite{Das05,Gre09,GSS16,GM14,GMS15,LRV12}.

\subsection*{Acknowledgements} It will be evident to the reader that we are greatly indebted to the mathematics of Henri Darmon. We also thank him for his comments on an earlier version of this draft. This project got underway at the BIRS-CMO workshop \textit{Special values of automorphic $L$-functions and associated $p$-adic $L$-Functions}, and continued at \textit{Iwasawa 2019}, where the authors were participants. We would like to thank Marco Seveso for explaining in detail his constructions in \cite{RS12,Sev1} which served as motivation for this project, and Denis Benois, Mladen Dimitrov, Mahesh Kakde, David Loeffler, A. Raghuram and Giovanni Rosso for their invaluable comments. Finally, we heartily thank the anonymous referee for numerous remarks and suggestions on an earlier version.

\section{Preliminaries and Bianchi modular forms} \label{sec:bianchi}

\subsection{Notation}

For an even integer $k \geq 0$, and a ring $R$, we define $V_k(R)$ to be the space of polynomials of degree $\leq k$ over $R$. This space is naturally a left $\GLt(R)$-module via
\[
	\left(\smallmatrd{a}{b}{c}{d}\cdotspace P\right)(x) = \tfrac{(a+cx)^k}{(ad-bc)^{k/2}}P\left(\tfrac{b+dx}{a+cx}\right).
\]
The factor of determinant means the centre acts trivially, and the action descends to $\PGLt(R)$.
\begin{remark} \label{rem:PGL(2)action}
Note the factor of the determinant makes sense only when the weight $k$ is even (such as the Bianchi modular forms considered in this paper). This factor was not considered in \cite{Wil17} which allows all integral weights $k$. The effect of twisting by the $k/2$-th power of the determinant is seen only on the Hecke operators which are scalar multiples of those considered in \cite{Wil17}.
\end{remark}
%
%
\subsection{Bianchi modular forms and their $L$-functions}
\label{sec:bianchi modular forms}
Bianchi modular forms are adelic automorphic forms for $\GLt$ over the imaginary quadratic field $F$. 
Their study goes back decades, and we recall only the essentials; for a fuller account in our conventions, see \cite[\S1]{Wil17}. For any open compact subgroup $U$ of $\GLt(\A_F^f)$, and any $k \ge -1$, there is a finite-dimensional $\C$-vector space $S_{k,k}(U)$ of \emph{Bianchi cusp forms} of weight $(k, k)$ and level $U$, which are functions
\[ f : \GLt(F) \backslash \GLt(\A_F) / U \longrightarrow V_{2k+2}(\C) \]
transforming appropriately under the subgroup $\C^\times \cdotspace \ \SUt(\C)$ and that satisfy suitable harmonicity and growth conditions. We specialise to the case where $k \geq 0$ and 
\[
	U = U_0(\cN) = \big\{\smallmatrd{a}{b}{c}{d} \in \GLt(\widehat{\roi}_F) : c \equiv 0 \newmod{\cN}\big\},
\]
where $\cN = \pri\cM \subset \roi_F$ is square-free and $\pri \nmid \cM$.

These forms admit an analogue of $q$-expansions (cf.~\cite[\S1.2]{Wil17}), giving rise to a system of Fourier--Whittaker coefficients $c(I,f)$ indexed by ideals $I \subset \mathcal{D}^{-1}$ (where $\mathcal{D}$ is the different of $F/\Q$). These can be described as the eigenvalues of Hecke operators: indeed, there is a family of (commuting) Hecke operators indexed by ideals $\m \subset \roi_F$, defined by double coset operators, and if $f$ is a Hecke eigenform normalised so that $c(1,f) = 1$, then the eigenvalue $\lambda_{\m}$ of the $\m$-th Hecke operator on $f$ is equal to $c(\m\mathcal{D}^{-1},f)$ (see \cite[Cor.\ 6.2]{Hid94}).

If $M$ is a suitable module equipped with an action of the Hecke operators, and $f$ is a cuspidal Bianchi eigenform, we write $M_{(f)}$ for the $f$-isotypic part of $M$, that is, the generalised eigenspace where the Hecke operators act with the same eigenvalues as on $f$. 
\begin{definition}
Let $\Lambda(f,\varphi)$ denote the (completed) $L$-function of $f$, normalised so that if $\varphi$ is a Hecke character of $F$ of infinity type $(q,r)$, where $q,r \gg 0$, then
\[
\Lambda(f,\varphi) = \frac{\Gamma(q+1)\Gamma(r+1)}{(2\pi i)^{q+r+2}}\sum_{\m \subset \roi_F, I\neq 0} c(\m,f)\varphi(\m) N(\m)^{-1}.
\]
This admits an analytic continuation to all such characters.
\end{definition}
The `critical' values of this $L$-function can be controlled; in particular, by \cite[Thm.\ 8.1]{Hid94}, we see that there exists a period $\Omega_{f} \in \C^\times$ and a number field $E$ such that, if $\varphi$ is a Hecke character of infinity type $0 \leq (q,r) \leq (k,k)$, with $q,r \in \Z$, we have
\begin{equation}\label{period}
\frac{\Lambda(f,\varphi)}{\Omega_{f}} \in E(\varphi),
\end{equation}
where $E(\varphi)$ is the number field over $E$ generated by the values of $\varphi$. (Note that $\Omega_f$ is only well-defined up to $E^\times$. We will not be concerned with questions of integrality in this paper).

\subsection{Base-change}
Let $f$ be a Bianchi modular form as above. We will be fundamentally interested in the \emph{base-change} of $f$ to a quadratic extension $K/F$. This is not strictly an operation on the automorphic \emph{form} $f$, but rather on the automorphic \emph{representation} $\pi$ of $\GLt(\A_F)$ that it generates. Let $V_{p}(f)$ be the $p$-adic Galois representation of $G_F$ attached to $f$. 

\begin{definition}
For a field $E$, let $W_E$ denote the Weil group of $E$. An automorphic representation $\pi_K$ of $\GLt(\A_K)$ is said to be the \emph{base-change} of $\pi$ if for each place $v$ of $F$, and each $w|v$ of $K$, the Langlands parameter attached to $\pi_{K,w}$ is the restriction to $W_{K_w}$ of the Langlands parameter $\sigma_v : W_{F_v} \rightarrow \GLt(\C)$ of $\pi_v$. On Galois representations, this corresponds to restriction from $G_F$ to $G_K$ of $V_{p}(f)$.
\end{definition}

\begin{theorem}[R.P. Langlands, \cite{Langlands1}]
Every cuspidal automorphic representation $\pi$ of $\GLt(\A_F)$ has a unique base-change lift $\pi_K$ to $\GLt(\A_K)$. It is cuspidal unless $\pi$ has CM by $K$, that is, $\pi$ is dihedral, induced from a Hecke character of $K$.
\end{theorem}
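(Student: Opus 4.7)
The plan is to sketch Langlands' original argument via the twisted trace formula, which remains the standard route to this theorem; I would not attempt a self-contained proof, but rather outline the three main pillars.

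First, I would construct the base-change lift locally, place by place. For a place $v$ of $F$ that splits in $K$ as $w_1 w_2$, one has $K_{w_i} \cong F_v$, and the natural candidate at $(w_1,w_2)$ is simply $\pi_v \otimes \pi_v$ (i.e., the pair is determined by the identity Langlands parameter). For a place $v$ that is inert or ramified, with unique $w \mid v$, the Langlands parameter $\sigma_v : W_{F_v} \to \GLt(\C)$ is to be restricted along $W_{K_w} \hookrightarrow W_{F_v}$, and the resulting parameter must be shown to correspond to a genuine admissible representation $\pi_{K,w}$ of $\GLt(K_w)$. In the unramified case this is trivial via Satake parameters: if $\pi_v$ has parameters $(\alpha_v,\beta_v)$ then $\pi_{K,w}$ has parameters $(\alpha_v^{f_w},\beta_v^{f_w})$. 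In the ramified case, one must establish a local character identity matching stable $\sigma$-twisted orbital integrals on $\GLt(K_w)$ with orbital integrals on $\GLt(F_v)$; this is the local base-change fundamental lemma, proved by Saito--Shintani--Langlands.

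Second, I would globalise via the twisted trace formula. One forms the non-invariant twisted trace formula for $\GLt(\A_K)$ with respect to the automorphism $\sigma \in \Gal(K/F)$ and compares it, term by term, with the ordinary (Arthur--)Selberg trace formula for $\GLt(\A_F)$. Matching of geometric sides (which reduces via the local fundamental lemma to matching of regular semisimple orbital integrals, together with a careful treatment of the singular and unipotent contributions) forces an equality of spectral sides. Isolating the cuspidal contribution on the $F$-side then produces, for each cuspidal $\pi$, a unique $\sigma$-invariant automorphic representation $\pi_K$ of $\GLt(\A_K)$ whose local components agree with the local lifts above; uniqueness is an immediate consequence of strong multiplicity one for $\GL_2$ applied to $\pi_K$.

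Third, I would address cuspidality. If $\pi_K$ fails to be cuspidal, then as a $\sigma$-invariant isobaric automorphic representation on $\GLt(\A_K)$ it must be an isobaric sum $\chi \boxplus \chi'$ of Hecke characters with $\{\chi,\chi'\}$ stable under $\sigma$. The case $\chi = \chi'$ is excluded because it would force $\pi$ to be an Eisenstein series on $\GLt(\A_F)$ via descent, contradicting cuspidality of $\pi$. Hence $\chi' = \chi^\sigma \neq \chi$, and a standard argument with the Weil representation (or equivalently Mackey theory on $L$-parameters, since $\mathrm{Ind}_{W_K}^{W_F}\chi$ is then irreducible) shows that $\pi$ must coincide with the automorphic induction $\mathrm{AI}(\chi)$; in particular $\pi$ is dihedral with CM by $K$. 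Conversely, if $\pi = \mathrm{AI}(\chi)$, then by construction $\pi_K = \chi \boxplus \chi^\sigma$ is not cuspidal, giving the stated dichotomy.

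The genuine obstacle is the trace formula comparison in the second step: matching the geometric sides requires the local base-change fundamental lemma at ramified places together with global control of the elliptic, hyperbolic and unipotent terms, and it is precisely here that Langlands' monograph \cite{Langlands1} does the heavy lifting. Steps one and three are comparatively formal once that input is granted.
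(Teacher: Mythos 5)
The paper does not prove this theorem; it cites it directly from Langlands' monograph \cite{Langlands1} and takes it as a black box. Your proposal is therefore an outline of the argument in the cited reference, not a comparison against a proof in the paper. With that understood, your three-pillar sketch is faithful to the standard route: local lifts (split places are formal, unramified inert places via Satake, ramified places via the local character identities of Saito--Shintani--Langlands), global existence via the twisted trace formula with uniqueness from strong multiplicity one, and the cuspidality dichotomy.

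One point in your third step is incomplete. You write that $\{\chi,\chi'\}$ is stable under $\sigma$ and then treat only the two cases $\chi = \chi'$ and $\chi' = \chi^\sigma \neq \chi$. But stability of the unordered pair allows a third case: $\chi \neq \chi'$ with both $\chi^\sigma = \chi$ and $(\chi')^\sigma = \chi'$. In that case each character descends to a Hecke character of $\A_F^\times$ (two descents each, differing by the quadratic character $\eta_{K/F}$), and the argument you give for $\chi = \chi'$ applies verbatim: $\pi$ would have to be an Eisenstein series $\chi_0 \boxplus \chi_0'$ on $\GLt(\A_F)$, contradicting cuspidality. So your dichotomy conclusion is correct, but the case analysis as written is not exhaustive; adding this case makes the step complete. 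Everything else in the outline is in order.
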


\subsection{Rankin--Selberg $L$-functions} \label{subsec:rankin-selberg}
Let $\pi$ be the cuspidal automorphic representation of $\GL_{2}(\mathbb{A}_{F})$ generated by $f$ and let $\chi$ be a (unitary) Hecke character of $K$ trivial on $\mathbb{A}_{F}^{\times}$. Denote by $\pi_{\chi}$ the automorphic representation $\mathrm{Ind}_{K}^F\chi$ on $\GL_{2}(\mathbb{A}_{F})$ induced by $\chi$. Let $L(\pi,\chi,s)$ denote the \textit{Rankin-Selberg} $L$-function associated to the $\chi$-twist of $\pi$. Then it is known that this $L$-function has a meromorphic continuation to the entire complex plane and satisfies the functional equation
\[ L(\pi,\chi,s) = \epsilon(\pi,\chi,s)L(\pi^{\vee},\chi^{-1},1 - s), \]
where $\pi^{\vee}$ denotes the contragradient dual of $\pi$ and $\epsilon(\pi,\chi,s)$ is the $\epsilon$-factor. The condition that $\chi|_{\mathbb{A}_{F}^{\times}} = 1$ implies that $L(\pi^{\vee},\chi^{-1},1 - s) = L(\pi,\chi,1-s)$ (cf. \cite[\S1.1]{CV07}) and the functional equation thus yields
\[ L(\pi,\chi,s) = \epsilon(\pi,\chi,s)L(\pi,\chi,1 - s). \] 
The parity of the order of vanishing at the central value $s = 1/2$ is thus determined by the value of 
\[ \epsilon(\pi,\chi) := \epsilon(\pi,\chi,1/2) \in \{\pm 1\}. \]
In this paper, we will be primarily interested in collection of \textit{ring class characters} $\chi$ of varying conductor, the definition of which we recall.
\begin{definition}\label{defn:ringclasscharacters}
Let $\mathcal{C}$ be any $\mathcal{O}_{F}$-ideal. A ring class character $\chi$ of conductor $\mathcal{C}$ is a (unitary) Hecke character that factors through the finite group 
\[ K^{\times}\backslash\mathbb{A}_{K}^{\times}\slash K_{\infty}^{\times}\widehat{\mathcal{O}}_{\mathcal{C}}^{\times} \cong \textup{Pic}(\mathcal{O}_{\mathcal{C}}) \]
where $K_{\infty} := K \otimes_{\mathbb{Q}} \mathbb{R}$ are the infinite adeles, $\mathcal{O}_{\mathcal{C}} := \mathcal{O}_{F} + \mathcal{C}\mathcal{O}_{K}$, and $\widehat{\cO}_{\cC} := \cO_{\cC}\otimes \widehat{\Z}$.
\end{definition}
In this paper, we will only consider ring class characters of conductor $\mathcal{C}$ prime to $\mathcal{N}\mathcal{D}_{K}$.
Note that $\epsilon(\pi,\chi) := \prod\limits_\nu \epsilon(\pi_{\nu},\chi_{\nu}),$ where $\nu$ runs through the places of $F$ and $\epsilon(\pi_{\nu},\chi_{\nu})$ is the local sign attached to the local components of $\pi$ and $\pi_{\chi}$ at the place $\nu$. We let
\[ S(\chi) := \{ \nu | \epsilon(\pi_{\nu},\chi_{\nu}) = -1 \}. \]
Whilst the various formulas for the local $\epsilon$-factors are spread throughout \cite{JL70} and \cite{Jac72}, we can provide a much simpler description of the set of places in $S(\chi)$ in our setting, noting that the infinite place is split (see \cite{Vat04} for a detailed exposition). In fact, we have
\[ S(\chi) = S(\mathcal{N},K) := \{ \nu | \nu \textup{ is finite and inert in K and } \textup{ord}_{\nu}(\mathcal{N}) \text{ is odd} \}. \]
In particular, the sign of the Rankin--Selberg $L$-function depends only on $K/F$ and the level $\mathcal{N}$ of $f$, and not on the ring class character $\chi$. Further, the \emph{Stark--Heegner} hypothesis implies that $|S(\chi)| = 1$ is odd so that $\epsilon(\pi,\chi) = -1$.

\subsection{Arithmetic subgroups and modular symbols}
We now pass to a more algebraic description of the relevant automorphic forms.

\begin{definition}
Let $\Delta_0 \defeq \mathrm{Div}^0(\Proj(F))$ be the space of degree zero divisors on the cusps. This admits an action of $\PGLt(F)$ by M\"{o}bius transformations $x \mapsto (ax+b)/(cx+d)$. 
\end{definition}
For any ring $R$, let $V_{k,k}(R) \defeq V_{k}(R)\otimes_R V_k(R)$, which has an action of $\PGLt(R)^2$ via the action on each component. If $L$ is a field containing the image of both embeddings $F \hookrightarrow \overline{\Q}$, then this gives an action of $\PGLt(F)$ on $V_{k,k}(L)$, with the action on the first component via one embedding and on the second component via its conjugate.

Now let $\Gamma \subset \PGLt(F)$ be any subgroup, and $V$ a right $\Gamma$-module. Define
\[
	\Delta(V) \defeq \mathrm{Hom}(\Delta_0,V),
\]
which we equip with an action of $\Gamma$ by
\[
	(\gamma \cdotspace \phi)(D) \defeq \phi(\gamma D)|\gamma.
\]
We define the space of \emph{$V$--valued modular symbols for $\Gamma$} to be the $\Gamma$-invariant maps
\[
	\Symb_{\Gamma}(V) \defeq \h^0(\Gamma,\Delta(V)).
\]

\begin{remark}
For the congruence subgroup $\Gamma_0(\cN) := U_0(\cN) \cap \SLt(\cO_F) \subset \SLt(\cO_F)$, recalling that $\cN = \pri\cM$, the space of \emph{Bianchi modular symbols} is the space $\Symb_{\Gamma_0(\cN)}(V_{k,k}^\vee(\Cp))$, where we equip $V_{k,k}^\vee$ with the (right) dual action.
This space admits an action of the Hecke operators. Moreover, since $F$ has class number one, there is a Hecke-equivariant injection due to the Eichler--Shimura isomorphism of G{\"u}nter Harder (see \cite{Har87})
\begin{equation} \label{eqn:eichlershimuraharder} S_{k,k}(U_0(\cN)) \hookrightarrow \Symb_{\Gamma_0(\cN)}(V_{k,k}^\vee(\C))
\end{equation}
and the cokernel consists of Eisenstein eigenpackets. In particular, to each Bianchi modular eigenform $f$ there exists an attached eigensymbol $\phi_f$. This symbol was crucial to the constructions of \cite{BW19}. For more details, see \cite{Wil17}.
\end{remark}

\subsection{Overconvergent modular symbols}
We recap the overconvergent modular symbols of \cite{Wil17}, which are fundamental to our theory of $\pri$-adic integration. When $p$ is inert, we use the full module of overconvergent modular symbols. When $p$ splits as $\pri\overline{\pri}$, we instead use the partially overconvergent coefficients defined in \S6 \emph{op.\ cit}, and hence impose no conditions at $\pribar$. Everything in this section is explained in greater detail in \cite[\S3,\S7]{Wil17}.

\begin{definition}
Let $F_{\pri}$ be the completion of $F$ at $\pri$ and $\cO_{\pri}$ its ring of integers. For a complete field extension $L/\Qp$, let $\cA_k(\cO_{\pri},L)$ (resp. $\cA_k(\cO_{p},L)$) denote the ring of locally analytic functions $\cO_{\pri} \rightarrow L$ (resp. $\cO_{p} \defeq \cO_{F} \otimes_{\ZZ} \ZZ_{p} \rightarrow L)$. Then:
\begin{itemize}
\item[(i)] If $p$ is inert in $F$, let $\cA^{\pri}_{k,k}(\cO_p,L )= \cA_k(\cO_{\pri},L)$;
\item[(ii)] If $p\cO_F = \pri\pribar$ splits, let \[
	\cA_{k,k}^{\pri}(\cO_p,L) = \cA_k(\cO_\pri,L) \widehat{\otimes} V_k(\cO_{\pribar},L),
\]
where $V_k(\cO_{\pribar},L)$ is the ring of polynomial functions $\cO_{\pribar} \rightarrow L$ of degree $\leq k$.
\end{itemize}
\end{definition}
Note that $\cA^{\pri}_{k,k}(\cO_{\pri},L ) \subseteq \cA_k(\cO_{p},L)$. Moreover $\cA^{\pri}_{k,k}(\cO_{\pri},L)$ carries a natural weight $k$ left action of the semigroup
\[
	\Sigma_0 := \left\{\smallmatrd{a}{b}{c}{d} \in M_2(\cO_p) : v_{\pri}(c) > 0, v_{\pri}(a) = 0, ad-bc \neq 0\right\}
\]
given by
\begin{equation}\label{eq:sigma action}
	\smallmatrd{a}{b}{c}{d}\cdotspace f(z) = \tfrac{(a+cz)^k}{(ad-bc)^{k/2}}f\left(\tfrac{b+dz}{a+cz}\right).
\end{equation}
\begin{remark}
	We note that we ask only that $v_{\pri}(c) > 0$, and put no restriction on $v_{\pribar}(c)$. If we let $\Sigma_0' \subset \Sigma_0$ be the subgroup where also $v_{\pribar}(c) > 0$, then $\Sigma_0'$ acts on $\cA_{k}(\cO_p,L)$ by \eqref{eq:sigma action}, and this action preserves the subspace $\cA^{\pri}_{k,k}(\cO_p,L)$.
\end{remark}
We define $\cD_{k,k}^{\pri}(\cO_p,L)$ to be the continuous dual of $\cA_{k,k}^{\pri}(\cO_p,L)$ with the dual right (weight $k$) action. Note that the inclusion $V_{k,k} \subset \cA_{k,k}^{\pri}$ (compatible under the weight $k$ left actions defined above) induces dually a surjection $\cD_{k,k}^{\pri} \rightarrow V_{k,k}^\vee,$ which in turn induces a Hecke-equivariant map
\[
\rho : \Symb_{\Gamma_0(\cN)}(\cD_{k,k}^{\pri}(\cO_p,L)) \longrightarrow \Symb_{\Gamma_{0}(\cN)}(V_{k,k}^\vee(L)).
\]

\begin{theorem}\label{thm:control theorem}
We have
\[
	\rho|_{(f)} : \Symb_{\Gamma_0(\cN)}(\cD_{k,k}^{\pri})_{(f)} \cong \Symb_{\Gamma_{0}(\cN)}(V_{k,k}^\vee)_{(f)},
\]
that is, the restriction of $\rho$ to the $f$-isotypic subspaces of the Hecke operators is an isomorphism. In particular, there is a unique overconvergent lift $\psi_f$ of $\phi_f$ under $\rho$.
\end{theorem}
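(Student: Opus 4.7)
The plan is to reduce the statement to the control theorem for overconvergent Bianchi modular symbols established in \cite{Wil17}. Two ingredients are needed: first, that the specialization map $\rho$ restricts to an isomorphism on the slope-$< h$ part of the $U_\pri$-action for an appropriate bound $h$ depending on $k$; second, that the $f$-isotypic subspace lies in this slope range.

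For the first ingredient, I would start from the short exact sequence of $\Sigma_0$-modules
\[
0 \longrightarrow \ker\rho \longrightarrow \cD_{k,k}^{\pri}(\cO_p,L) \longrightarrow V_{k,k}^\vee(L) \longrightarrow 0,
\]
which upon applying $\Symb_{\Gamma_0(\cN)}(-)$ induces a long exact sequence in group cohomology. The standard approach, following Pollack--Stevens, is to filter $\ker\rho$ by order of vanishing of the dual functionals at a fixed point, and to show that $U_\pri$ acts on each graded piece with slope bounded below by an explicit linear function of the filtration degree. Taking $h \leq k+1$, the cohomology of $\ker\rho$ then vanishes on slopes $< h$, giving the desired isomorphism. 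The Bianchi adaptation of this argument is carried out in \cite{Wil17}: in the inert case using the full overconvergent coefficients, and in the split case using the partially overconvergent coefficients $\cA_{k,k}^{\pri}$ introduced in \S6 therein (which is why $\Sigma_0$ only requires $v_{\pri}(c) > 0$).

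For the second ingredient, I would use that the hypothesis $\cN = \pri\cM$ square-free with $\pri \nmid \cM$ together with $f$ being new at $\pri$ forces, via the Bianchi analogue of Atkin--Lehner theory, that the un-normalized $U_\pri$-eigenvalue of $f$ satisfies $\lambda_\pri^2 = p^k$, i.e.\ $\lambda_\pri = \pm p^{k/2}$. With the $(ad-bc)^{k/2}$ factor built into the weight-$k$ action (cf.~Remark~\ref{rem:PGL(2)action}), this normalises to $\pm 1$, a $p$-adic unit. Hence $\Symb_{\Gamma_0(\cN)}(V_{k,k}^\vee)_{(f)}$ lies in the slope-zero subspace of $U_\pri$, well within the range where the control theorem applies. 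Combining the two ingredients yields that $\rho|_{(f)}$ is an isomorphism; the uniqueness of the lift $\psi_f$ of $\phi_f$ is then immediate from injectivity.

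The main obstacle is the slope estimate on the cohomology of $\ker\rho$, which is both the technical heart of the argument and the content of the control theorems in \cite{Wil17}. Everything else — the long exact sequence formalism, the existence of slope decompositions for the compact operator $U_\pri$ acting on the Fr\'echet space $\Symb_{\Gamma_0(\cN)}(\cD_{k,k}^{\pri})$, and the verification that the classical $f$-isotypic part is a small-slope piece — is essentially formal once the Atkin--Lehner normalisation of $\lambda_\pri$ has been pinned down.
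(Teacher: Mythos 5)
Your overall strategy matches the paper's: reduce to the overconvergent control theorem of \cite{Wil17} (Cor.~5.9 for $p$ inert, \S6 for $p$ split) by verifying that the $\pri$-slope of $f$ is below the bound $k+1$. However, your slope computation is wrong, and in a way that would actually matter if the bound were tighter.

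You write $\lambda_\pri^2 = p^k$. For a Bianchi form new at $\pri$, Atkin--Lehner gives $c(\pri,f) = \pm N(\pri)^{k/2}$ up to a root of unity (this is exactly the shape used in Lemma~\ref{lem:rohrlich}), so the correct relation is $c(\pri,f)^2 = N(\pri)^k$, not $p^k$. Your formula coincides with the correct one only when $p$ splits, since then $N(\pri) = p$. When $p$ is \emph{inert}, $N(\pri) = p^2$, so $c(\pri,f) = \pm p^k$ and $v_\pri(c(\pri,f)) = k$ (with $v_\pri(p)=1$ since $F_\pri/\Qp$ is unramified). After dividing by the determinant factor $(\det)^{k/2} = p^{k/2}$ (the uniformiser of $F_\pri$ is still $p$ in the inert case), the normalised eigenvalue is $\pm p^{k/2}$, i.e.\ slope $k/2$ --- not slope zero. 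So your claim that $\Symb_{\Gamma_0(\cN)}(V_{k,k}^\vee)_{(f)}$ sits in the \emph{slope-zero} subspace is false when $p$ is inert and $k > 0$.

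The paper's own proof sidesteps normalisation issues by working with the un-normalised coefficient directly and just citing the numerics: $v_\pri(c(\pri,f)) = k < k+1$ (inert) and $v_\pri(c(\pri,f)) = k/2 < k+1$ (split), together with \cite[Cor.~4.8]{BW19}. Since $k$ and $k/2$ are both strictly less than $k+1$, the control theorem applies and the conclusion of Theorem~\ref{thm:control theorem} is still correct --- but for the reason you need to be \emph{below the small-slope bound $k+1$}, not that you are at slope zero. You should also be aware that \cite{Wil17} does not include the $(\det)^{k/2}$ twist in its Hecke action (cf.\ Remark~\ref{rem:PGL(2)action}), so when citing its control theorem the slope must be computed in that un-normalised convention; mixing the two conventions is precisely where your argument goes astray.
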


\begin{proof}
In the inert case, this is \cite[Cor. 5.9]{Wil17}, noting that as $f$ is new at $\pri$, we have $v_{\pri}(c(\pri,f)) = k < k+1$ (see e.g.\ \cite[Cor.\ 4.8]{BW19}). In the split case, this follows more or less immediately from the results of \cite[\S6]{Wil17} noting that $v_{\pri}(c(\pri,f)) = k/2 < k+1;$ in any case, this locally analytic, partially overconvergent formulation is \cite[Lem.\ 9.8]{BW16}.
\end{proof}

%
%
\section{$\p$-adic integration and an $\mathcal{L}$-invariant} \label{sec:integration}

In this section we recall some results on the Bruhat--Tits tree and $\pri$-adic integration from \cite{BW19}. One of the key technical results \emph{op.\ cit}.\ was the construction of a canonical family of distributions on the projective line attached to $f$, which were used to define double integrals in the style of Darmon. We use these double integrals in a slightly different context.

\subsection{The Bruhat--Tits tree}
One of the main themes of \cite{BW19} was the `spreading out' of the Bianchi modular symbol $\phi_f$, which is defined using invariance under $\Gamma_0(\cN)$, to a function that is invariant under the action of a larger group $\Gamma$, where we allow non-integral coefficients at $\pri$. This was done via the Bruhat--Tits tree.
\begin{definition}
Let $\pri$ be a prime of $F$ above $p$ and let $\pi_{\pri}$ be a uniformiser of $\cO_{\pri}$. We denote by $\treep$ the Bruhat--Tits tree for $\GLt(F_{\pri})$, that is, the connected tree whose vertices are homothety classes of lattices in a two-dimensional $F_{\pri}$-vector space $V$. Two vertices $v$ and $v'$ are joined by an edge if and only if there are representatives $L$ and $L'$ respectively such that
\[\pi_{\pri}L' \subset L \subset L'.\]
Each edge comes equipped with an orientation. Denote the set of (oriented) edges of $\treep$ by $\mathcal{E}(\treep)$ and the set of vertices by $\mathcal{V}(\treep).$
\end{definition}
Define the \emph{standard vertex} $v_*$ to be the vertex corresponding to the lattice $\roi_{\pri}\oplus\roi_{\pri}$, and the \emph{standard edge} $e_*$ to be the edge joining $v_*$ to the vertex corresponding to $\roi_{\pri}\oplus\pi_{\pri}\roi_{\pri}$. There is a natural notion of \emph{distance} between two vertices $v$ and $v'$, and we say a vertex $v$ is \emph{even} (resp.\ \emph{odd}) if the distance between $v$ and $v_*$ is even (resp.\ odd). Each (oriented) edge has a source and a target vertex, and we say such an edge is even if its source is. Write $\vertices^+(\treep)$ and $\edges^+(\treep)$ (resp.\ $\vertices^-(\treep)$ and $\edges^-(\treep)$) for the set of even (resp.\ odd) vertices and edges resp.

There is a natural transitive action of $\PGLt(F_{\pri})$ on $\treep$, which we extend to a larger group.
\begin{definition}\label{group omega}Recall $\cN = \pri\cM$ with $\pri\nmid \cM$.
\begin{itemize}
\item[(i)] For $v$ a finite place of $F$, define 
\[R_0(\cM)_v \defeq \left\{\matr\in\mathrm{M}_2(\roi_v): c \equiv 0 \newmod{\cM}\right\}.\]
\item[(ii)] Let $R = R_0(\cM) \defeq \left\{\gamma \in \mathrm{M}_2\left(\A_F^f\right): \gamma_{v} \in R_0(\cM)_{v}\text{ for }v\neq \pri, \gamma_{\pri}\in\mathrm{M}_2(F_{\pri}) \right\}.$
\item[(iii)]Let $\widetilde{U}$ denote the image of $R^\times$ in $\mathrm{PGL}_2\left(\A_F^f\right)$.
\item[(iv)] Let $U \defeq \mathrm{PGL}_2^+\left(\A_F^f\right)\cap\widetilde{U},$ where 
\[\PGLt^+\left(\A_F^f\right) \defeq \left\{\gamma \in \PGLt\left(\A_F^f\right):v_{\pri}(\det(\gamma_{\pri})) \equiv 0\newmod{2}\right\}.\]
\item[(v)] Finally, let 
\[
	\widetilde{\Gamma} =\widetilde{U}\cap\PGLt(F), \hspace{15pt} \Gamma = U\cap\PGLt(F).
\]
\end{itemize}
\end{definition}
These groups act on $\treep$ via projection to $\mathrm{PGL}_2(F_{\pri})$. 
\begin{proposition}\label{prop:transitive}
\begin{itemize}
\item[(i)]The group $\widetilde{U}$ acts transitively on $\vertices(\treep)$ and $\edges(\treep)$. 
\item[(ii)]
The group $U$ acts transitively on $\edges^\pm(\treep)$ and $\mathcal{V}^\pm(\treep)$.
\end{itemize}
\end{proposition}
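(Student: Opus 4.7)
The plan is to deduce both transitivity statements from two ingredients: the classical transitivity of $\PGLt(F_\pri)$ on $\treep$, and the observation that elements of $\widetilde U$ are essentially unrestricted at the prime $\pri$.

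First I would check that the projection $\widetilde U \to \PGLt(F_\pri)$ onto the $\pri$-component is surjective, and likewise that $U \twoheadrightarrow \PGLt^+(F_\pri)$. Given $g \in \GLt(F_\pri)$, we lift it to $\tilde\gamma \in R^\times$ by setting $\tilde\gamma_\pri = g$ and $\tilde\gamma_v = 1$ for every finite $v \neq \pri$; the identity matrix trivially satisfies the upper-triangular-mod-$\cM$ condition at each such $v$, so this is a valid element of $R^\times$ and hence represents an element of $\widetilde U$. If further $v_\pri(\det g) \in 2\ZZ$, then the same lift lies in $U$ by definition. Since $\widetilde U$ and $U$ act on $\treep$ through their $\pri$-components, transitivity of $\widetilde U$ (resp.\ $U$) on the relevant sets reduces to transitivity of $\PGLt(F_\pri)$ (resp.\ $\PGLt^+(F_\pri)$).

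Next I would verify the latter. Transitivity of $\PGLt(F_\pri)$ on $\vertices(\treep)$ and $\edges(\treep)$ is standard, with stabilizers of $v_*$ and $e_*$ being $\PGLt(\cO_\pri)$ and the standard Iwahori subgroup respectively. For the refined statement for $\PGLt^+$, apply the elementary divisor theorem to write any $g \in \GLt(F_\pri)$ as $k_1 \smallmatrd{\pi_\pri^m}{0}{0}{\pi_\pri^n} k_2$ with $k_i \in \GLt(\cO_\pri)$. Then the distance from $g\cdotspace v_*$ to $v_*$ equals $|n-m|$, which is congruent mod $2$ to $v_\pri(\det g) = m+n$. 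Hence $g$ preserves the parity of vertices if and only if $g \in \PGLt^+(F_\pri)$; combined with the fact that $\PGLt(\cO_\pri) \subset \PGLt^+(F_\pri)$ fixes $v_*$, this shows that $\PGLt^+(F_\pri)\cdotspace v_* = \vertices^+(\treep)$, which gives (ii) for vertices. The edge case is analogous, using that the Iwahori stabilizer of $e_*$ sits inside $\PGLt^+(F_\pri)$ and that the parity of an oriented edge is determined by the parity of its source vertex.

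The argument is a direct combination of these standard facts, the only mildly non-formal step being the elementary divisor/parity computation above; there is no serious obstacle. In particular, no form of strong approximation is required, precisely because $\widetilde U$ is defined adelically with no constraint imposed at $\pri$, which makes the lifting step immediate.
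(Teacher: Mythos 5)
Your proof is correct. The paper's own proof consists of a single citation to Serre's \emph{Trees} (Theorem 2 of Chapter II.1.4), which supplies exactly the local transitivity and parity facts that you establish directly via elementary divisors; your reduction from the adelic groups $\widetilde{U}$ and $U$ to the local groups $\PGLt(F_{\pri})$ and $\PGLt^+(F_{\pri})$, via the lift supported only at $\pri$, is the implicit step needed to invoke that citation, so in substance the two arguments coincide.
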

\begin{proof}
See \cite{Ser80}, Theorem 2 of Chapter II.1.4.
\end{proof}

The edges of $\treep$ are naturally in bijection with open sets in $\Proj(F_{\pri})$; for an edge $e$, write $e = \gamma_e e_*$ using (i) above. The corresponding open set is
\[
\cU(e) \defeq \gamma_e^{-1}(\roi_\pri) = \left\{x\in\Proj(F_{\pri}): \gamma_e x \in \roi_{\pri}\right\} \subset \Proj(F_{\pri}),
\]
where we let $\PGLt(F_{\pri})$ act on $\Proj(F_{\pri})$ by M\"{o}bius transformations.

\subsection{Bianchi forms on the tree}
Let $R$, $\widetilde{U}$ and $U$ be as above.
\begin{definition}
We say a function $f: \edges(\treep) \rightarrow V_{2k+2}(\C)$ is \emph{harmonic} if
\[f(\overline{e}) = -f(e) \hspace{12pt}\forall e\in\edges(\treep)\]
and if for all vertices $v\in\vertices(\treep)$, we have
\[\sum_{e: s(e) = v}f(e) = \sum_{e: t(e) = v}f(e) = 0.\]
\end{definition}
Denote by $\PGLt^f(\A_F) \defeq \GLt(\C)\times\PGLt(\A_F^f)$, and note that $U$ acts on $\PGLt^f(\A_F)$ by right multiplication. We define Bianchi cusp forms on $\treep$ as follows. 
\begin{definition}
A \emph{cusp form} on $\treep\times\PGLt^f(\A_F)$ of weight $(k,k)$ for $U$ is a function
\[\mathcal{F}:\edges(\treep)\times\PGLt^f(\A_F) \longrightarrow V_{2k+2}(\C)\]
such that:
\begin{itemize}
\item[(i)] $\mathcal{F}(\gamma e,g\gamma) = \mathcal{F}(e,g)$ for all $\gamma \in U$.
\item[(ii)] $\f$ is harmonic as a function on $\edges(\treep)$.
\item[(iii)] For each edge $e \in \edges(\treep)$, the function
\[\mathcal{F}_{e}(g) \defeq \mathcal{F}(e,g)\]
is a cusp form of weight $(k,k)$ and level $U_{e} \defeq \mathrm{Stab}_U(e).$
\end{itemize}
Denote the space of such forms by $\s_{k,k}(U,\treep)$.
\end{definition}

\begin{theorem}[\cite{BW19}, Thm.\ 2.7]\label{new forms tree}
Let $\cN = \pri\cM$. The association $\f \mapsto \f_{e_*}$ defines an isomorphism
\[\s_{k,k}(U,\treep) \cong S_{k,k}(U_0(\cN))^{\pri-\mathrm{new}}.\]
\end{theorem}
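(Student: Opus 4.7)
The plan is to exhibit $\f \mapsto \f_{e_*}$ as a bijection by exploiting the transitivity statements in Proposition~\ref{prop:transitive} to reduce the data of $\f$ to the single form $\f_{e_*}$, and then translating the vertex-harmonicity axiom for $\f$ into the vanishing conditions that characterise $\pri$-newness. For well-definedness, one checks that the stabiliser $U_{e_*}$ of the standard edge inside $U$ coincides with (the image of) $U_0(\cN)$, since $e_*$ encodes exactly an Iwahori level structure at $\pri$ on top of the level $\cM$ structure away from $\pri$; axiom (iii) then guarantees that $\f_{e_*}$ is a cusp form of weight $(k,k)$ and level $U_0(\cN)$. Injectivity follows at once from Proposition~\ref{prop:transitive}(ii): if $\f_{e_*}\equiv 0$, then the equivariance axiom (i) propagates this vanishing along the $U$-orbit of $e_*$, which exhausts the even edges, and the edge-reversal harmonicity $\f(\overline{e})=-\f(e)$ then handles the odd edges.

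For surjectivity, given $f \in S_{k,k}(U_0(\cN))^{\pri\text{-new}}$ we extend $f$ to a function $\f$ on $\edges(\treep)\times\GLt^f(\A_F)$ as follows. For any even edge $e = \gamma e_*$ with $\gamma \in U$, define $\f_e$ to be the cusp form at level $U_e = \gamma U_{e_*}\gamma^{-1}$ obtained by the natural $\gamma$-translation of $f$; well-definedness on the coset $\gamma U_{e_*}$ follows from the right $U_{e_*}$-invariance of $f$. Extend to odd edges by setting $\f(\overline{e}) \defeq -\f(e)$. Axioms (i), (iii) and the edge-reversal part of axiom (ii) then hold by construction, so the remaining non-trivial content is the vertex-harmonicity condition $\sum_{e: s(e)=v}\f(e,g) = 0$ for every $v \in \vertices(\treep)$. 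By $U$-equivariance and transitivity of $U$ on $\vertices^\pm(\treep)$, it suffices to verify this at the two endpoints of $e_*$.

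At the standard vertex $v_*$, the edges with source $v_*$ are parametrised by $\Proj(k_\pri)$, where $k_\pri \defeq \cO_\pri/\pri$; choosing explicit coset representatives for them in $\PGLt(F_\pri)$ and unwinding the definition of $\f$, the sum $\sum_{s(e)=v_*}\f(e,g)$ becomes, up to a normalising scalar, the image of $f$ at $g$ under the classical degeneracy trace $\mathrm{Tr}_1: S_{k,k}(U_0(\pri\cM)) \to S_{k,k}(U_0(\cM))$. An identical calculation at the other endpoint of $e_*$ produces the second degeneracy trace $\mathrm{Tr}_\pri$, namely the trace composed with the Atkin--Lehner element at $\pri$. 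Since the $\pri$-new subspace is by definition the common kernel of $\mathrm{Tr}_1$ and $\mathrm{Tr}_\pri$, vertex harmonicity at every vertex is simultaneously equivalent to $\pri$-newness of $f$. This both identifies the image of $\f\mapsto\f_{e_*}$ with $S_{k,k}(U_0(\cN))^{\pri\text{-new}}$ and completes the proof of surjectivity.

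The main obstacle is the explicit unwinding in the previous paragraph: one must select coset representatives in $\PGLt(F_\pri)$ for the edges emanating from each vertex (parametrised by $\Proj(k_\pri)$), keep careful track of the weight $(k,k)$ action conventions (including the determinant normalisation highlighted in Remark~\ref{rem:PGL(2)action}), and verify that the resulting sum matches the standard definition of the degeneracy traces at $\pri$. This local computation is the only genuinely substantive step; the rest of the argument is a clean application of tree transitivity and the harmonicity axioms.
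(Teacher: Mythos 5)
Your argument is correct and follows essentially the same route as the proof of this statement in [BW19, Thm.\ 2.7], which the present paper merely cites: reduce to the standard edge via the transitivity statements of Proposition~\ref{prop:transitive}, spread a $\pri$-new form over the tree by translation and edge reversal, and identify vertex-harmonicity at the two endpoints of $e_*$ with the vanishing of the two degeneracy maps $\phi_s$, $\phi_t$ that define $\pri$-newness (Definition~\ref{defn:p-new}). The only substantive step is the explicit coset computation matching the harmonicity sums with these traces (with the Atkin--Lehner twist at the odd vertex), which you correctly flag and which is exactly the content carried out in the reference.
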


\subsection{Distributions arising from the tree}

The tree allows us to extend distributions from $\cO_p$ to be `projective in $\pri$'.
\begin{definition}
Let $\cA_k(\PP^1(F_{\pri}),L)$ denote the space of $L$-valued functions on $\PP^1(F_{\pri})$ that are locally analytic except for a pole of order at most $k$ at $\infty$. Then:

\begin{itemize}
\item[(i)] If $p$ is inert in $F$, define $\PP^1_{\pri} = \PP^1(F_{\pri})$, and $\cA_k^{\pri}(\PP^1_{\pri},L) = \cA_k(\PP^1(F_{\pri}),L)$. 
\item[(ii)] If $p\cO_F = \pri\pribar$ in $F$, define $\PP^1_{\pri} = \PP^1(F_{\pri}) \times \roi_{\pribar}$, and let 
\[
\cA_{k}^{\pri}(\PP^1_{\pri},L) = \cA_k\big(\PP^1(F_{\pri}),L\big)\widehat{\otimes}V_k\big(\cO_{\pribar},L\big).
\]
\end{itemize}
\end{definition}

Let $\cD_k^{\pri}(\PP^1_{\pri},L) := \mathrm{Hom}_{\mathrm{cts}}(\cA_k^{\pri}(\PP^1_{\pri},L), L)$. This space of distributions is a right $\Gamma$-module, and there is a natural restriction map 
\[
	\cD_k^{\pri}(\PP^1_{\pri},L) \longrightarrow \cD_{k,k}^{\pri}(\cO_p,L),
\]
inducing
\[
	\rho_{\cT} : \Symb_{\Gamma}(\cD_{k}^{\pri}(\PP^1_{\pri},L)) \longrightarrow \Symb_{\Gamma_0(\cN)}(\cD_{k,k}^{\pri}(\cO_p,L)).
\]
The domain of this map should be considered as `overconvergent Bianchi modular symbols on $\cT_{\pri}$', and carries a natural action of the Hecke operators making the map $\rho_{\cT}$ Hecke equivariant. The Theorem below is an overconvergent analogue of Theorem \ref{new forms tree}.

\begin{theorem} \label{thm:overconvergentprojectivemodularsymbol}
Let $f \in S_{k,k}(U_0(\cN))$ be a cuspidal Bianchi eigenform that is new at $\pri$. 
Then 
\[
\rho_{\cT}|_{(f)} : \Symb_{\Gamma}(\cD_{k}^{\pri}(\PP^1_{\pri},L))_{(f)} \cong \Symb_{\Gamma_0(\cN)}(\cD_{k,k}^{\pri}(\cO_p,L))_{(f)},
\] 
that is, the restriction of $\rho_{\cT}$ to the $f$-isotypic Hecke-eigenspaces is an isomorphism.
\end{theorem}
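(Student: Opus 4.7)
\emph{Approach.} The plan is to lift the correspondence between $\pri$-new Bianchi forms and tree-valued forms of Theorem~\ref{new forms tree} to the level of overconvergent modular symbols. The map $\rho_\cT$ has two features built into it: on coefficients, it restricts a distribution on $\PP^1_\pri$ to its ``standard piece'' on $\cO_p$ (the compact open $\cU(e_*)$ in the inert case, with an additional factor on $\cO_{\pribar}$ in the split case); on group invariance, it restricts from the larger group $\Gamma$ to the Iwahori-type subgroup $\Gamma_0(\cN)$. Since $U$ acts transitively on $\edges^\pm(\treep)$ (Proposition~\ref{prop:transitive}) and since the collection $\{\cU(e)\}_{e\in\edges(\treep)}$ forms a basis of compact opens for $\Proj(F_\pri)$, both restrictions should become reversible on $f$-isotypic components once the $\pri$-new hypothesis is invoked.

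\emph{Step 1 (Injectivity).} Suppose $\Psi \in \Symb_\Gamma(\cD_k^\pri(\PP^1_\pri))_{(f)}$ with $\rho_\cT(\Psi) = 0$. Then for every $D \in \Delta_0$, the distribution $\Psi(D)$ vanishes on functions supported in $\cU(e_*)$. For any other edge $e = \gamma_e^{-1} e_*$ with $\gamma_e \in \widetilde{U}$, and for suitable $D'$, the $\Gamma$-invariance (combined with passing to $\Gamma$ via a representative of the right coset in $\widetilde{U}/\Gamma$) transports this vanishing to $\cU(e)$. Covering $\Proj(F_\pri)$ by such $\cU(e)$ forces $\Psi(D) = 0$ identically.

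\emph{Step 2 (Surjectivity via explicit inverse).} Given $\psi_f \in \Symb_{\Gamma_0(\cN)}(\cD_{k,k}^\pri)_{(f)}$, we would construct $\Psi_f \in \Symb_\Gamma(\cD_k^\pri(\PP^1_\pri))$ by spreading $\psi_f$ across the tree: for an edge $e = \gamma_e^{-1} e_*$ and a function $h \in \cA_k^\pri(\PP^1_\pri,L)$ supported on $\cU(e)$, set
\[
    \Psi_f(D)(h) \defeq \psi_f(\gamma_e D)\bigl(h \circ \gamma_e^{-1}\bigr),
\]
with the split-case analogue carrying through the additional $\cO_{\pribar}$-factor unchanged. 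Independence of the choice of $\gamma_e$ is exactly $\Gamma_0(\cN)=\Gamma\cap\mathrm{Stab}_U(e_*)$-invariance of $\psi_f$. Gluing across edges meeting a common vertex requires harmonicity of the resulting edge-system of distributions; this is the place where the $\pri$-new hypothesis enters, exactly as in Theorem~\ref{new forms tree}.

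\emph{Main obstacle.} The principal difficulty is verifying this harmonicity and hence the well-definedness of $\Psi_f$ on functions supported across several $\cU(e)$. The required identity at the vertex $v_*$ matches, up to the $\pri$-part of the Hecke algebra, the $U_\pri$-eigenvalue relation on $\psi_f$. Since $f$ is $\pri$-new with $v_\pri(c(\pri,f)) = k$ (inert) or $k/2$ (split), we are in the slope range where the arguments of \cite[Cor.~5.9 and \S6]{Wil17} -- already used to prove Theorem~\ref{thm:control theorem} -- apply; invoking the same distribution-theoretic extension arguments upgrades the classical harmonicity of Theorem~\ref{new forms tree} to the overconvergent setting. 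Finally, Hecke-equivariance of the construction $\psi_f \mapsto \Psi_f$ is formal from the definitions, and $\rho_\cT(\Psi_f) = \psi_f$ by evaluating at $e = e_*$ with $\gamma_{e_*} = 1$, completing the inverse and hence the isomorphism on $f$-isotypic components.
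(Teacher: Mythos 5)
Your proposal follows essentially the same route as the paper's proof (which itself is a summary of \cite[\S5]{BW19}): injectivity by covering $\PP^1_\pri$ by $\Gamma$-translates of $\cO_p$ and transporting the vanishing via $\Gamma$-invariance, and surjectivity by propagating $\psi_f$ over the tree, with the $\pri$-new hypothesis entering precisely to make the gluing across vertices consistent. One imprecision worth flagging: in Step 1 you take $\gamma_e \in \widetilde{U}$ and then speak of ``passing to $\Gamma$ via a representative of the right coset in $\widetilde{U}/\Gamma$,'' but the invariance you have available is only under $\Gamma = U\cap\PGLt(F)$, and $U$ (hence $\Gamma$) is transitive only on $\edges^{\pm}(\treep)$, not on all edges; the paper sidesteps this cleanly by covering $\PP^1_\pri$ using just the $N(\pri)+1$ edges with source $v_*$, all of which are even and so lie in a single $\Gamma$-orbit, giving $\gamma_i\in\Gamma$ with $\cU(e_i)=\gamma_i^{-1}\cO_p$ directly. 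The same remark applies to Step 2: your explicit formula $\Psi_f(D)(h) := \psi_f(\gamma_e D)(h\circ\gamma_e^{-1})$ is only directly meaningful for $\gamma_e\in\Gamma$, hence for even edges; the extension to odd edges is exactly where the harmonicity/$U_\pri$-eigenvalue relation (and hence $\pri$-newness) is used, which you correctly identify as the main obstacle but should be reflected in the formula's stated domain of validity.
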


In particular, combining with Theorem \ref{thm:control theorem}, attached to $f$ we obtain a canonical element $\mu_{f} \in \Symb_{\Gamma}(\cD_k^{\pri}(\PP^1_{\pri},L))$. Explicitly, we recall that this is a system of distributions
\[
	\mu_f\{r - s\} \in \cD_k^{\pri}(\PP^1_{\pri},L),
\]
where $r,s \in \Proj(F)$, with $\mu_f\{r-s\} + \mu_f\{s - t\} = \mu_f\{r-t\}$ and $\mu_f\{\gamma r - \gamma s\}|\gamma = \mu_f\{r - s\}$ for all $\gamma \in \Gamma = U \cap \PGLt(F)$.

\begin{proof} 
This is a reformulation of \S5 of \cite{BW19}, which we briefly sketch. First we show surjectivity; let $\psi \in \Symb_{\Gamma_0(\cN)}(\cD_{k,k}^{\pri}(\cO_p,L))_{(f)}$. Following Propositions 5.4 (when $p$ is inert) and 5.8 ($p$ split) \emph{op.\ cit}.\ we again `propagate' $\psi$ over the tree to construct a lift of $\psi$ under $\rho_{\cT}$. This uses the correspondence between the edges of the Bruhat--Tits tree and basic open sets in $\PP^1(F_{\pri})$, which allows the definition of a system of distributions $\mu\{r-s\}$ defined on $\Gamma$-translates of $\cO_p$. To extend to a distribution on all of $\PP^1_{\pri}$, we crucially use that $f$ is new at $\pri$ to glue these together over the whole tree (c.f. the proof of Proposition 5.4 \emph{op.\ cit.}).

To see injectivity, suppose $\mathrm{res}(\mu) = 0$, and $\mu \neq 0$. Then there exist $r,s\in \PP^1(F)$ and $g \in \cA_k^{\pri}(\PP_{\pri}^1,L)$ such that $\mu\{r-s\}(g) \neq 0$. Let $\{e_i : i \in I\}$ be the set of edges in $\cT_{\pri}$ with source $v_*$; then $\PP_{\pri}^1 = \bigsqcup_i \cU(e_i)$, and there exist $\gamma_i \in \Gamma$ such that $\cU(e_i) = \gamma_i^{-1}\cdot\cO_p.$ Then $\gamma_{i}\cdot g \in \cA(\cO_p,L)$ has support on $\cO_p$. In particular, we have
	\begin{align*}
		\mu\{r-s\}(g) = \sum_{i \in I}\mu\{r-s\}\big(g|_{\cU(e_i)}\big) &= \sum_{i \in I}\mu\{\gamma_{i}r - \gamma_{i}s\}(\gamma_{i}\cdotspace g)\\
			&= \sum_{i\in I}\mathrm{res}(\mu)\{\gamma_{i}r - \gamma_{i}s\}(\gamma_{i}\cdotspace g) = 0,
	\end{align*}
	which is a contradiction; hence the map is injective.
\end{proof}
For brevity, we set $\mathbf{MS}_{\Gamma}(L) \defeq \textup{Symb}_{\Gamma}(\cD_k^{\pri}(\PP^1_{\pri},L)).$

\begin{remark}\label{rem:multiplicityone}
There is an identification of modular symbols with degree one compactly supported cohomology (see \cite[Lemma 8.2]{BW19}). By multiplicity one for Bianchi forms (see \cite[Proposition 3.1]{Hid94}) and the Eichler--Shimura--Harder isomorphism, we know that the $f$-isotypic part of this cohomology group is one-dimensional, providing the coefficient field $L$ contains the Hecke field of $f$. From Theorems \ref{thm:control theorem} and \ref{thm:overconvergentprojectivemodularsymbol}, it follows that $\mathbf{MS}_{\Gamma}(L)_{(f)}$ is a one-dimensional $L$-vector space.
\end{remark}

\subsection{Double integrals}

We now define double integrals in the style of \cite{Dar01}.
We fix forever a choice of $p$-adic logarithm $\log_p : \Cp^\times \longrightarrow \Cp$ such that $\log_{p}(p) = 0$. Recall that $F_{\pri}$ is the completion of $F$ at the prime $\pri$ above $p$ induced by $\iota_p$. Let $\sigma \in \Gal(F_\pri/\Q_p)$ be an automorphism, corresponding to a choice of embedding of $F_{\pri} \hookrightarrow L$. We have:
	\begin{itemize}
		\item[(i)] If $p$ is split, then $F_{\pri} = \Qp$ and $\sigma = \sigma_{\mathrm{Id}}$ must be the identity;
		\item[(ii)] If $p$ is inert, then either $\sigma = \sigma_{\mathrm{Id}}$ or $\sigma = \sigma_{\mathrm{Fr}}$, the lift of the Witt vector Frobenius to $F_{\pri}$ (that is, the non-trivial Galois automorphism).
	\end{itemize}  
\begin{definition} \label{defn:doubleintegrals}
	Let $x,y \in \uhp_{\pri}^{\mathrm{ur}}$, let $P \in V_{k,k}(\Cp)$ and let $r,s \in \Proj(F)$.
\begin{itemize}
\item[(i)] Define the `log' double integral at $\sigma$ by
	\[
	\int_x^y\int_r^s (P)\omega_{\mu}^{\textup{log},\sigma} \defeq \int_{\Proj_{\pri}}\log_p\left(\frac{t_{\pri}-x}{t_{\pri}-y}\right)^{\sigma}P(t)d\mu\{r-s\}(t),
\]
	where $t_{\pri}$ is the projection of $t \in \Proj_{\pri}$ to $\Proj(F_{\pri})$. (Note that this is well-defined since $\uhp_{\pri}^{\mathrm{ur}} \defeq \Proj(\QQ_{p}^{\mathrm{ur}})\backslash\Proj(F_{\pri})$, so that $t_{\pri}-x$ and $t_{\pri}-y$ cannot vanish and their quotient gives an element of $(\QQ_{p}^{\mathrm{ur}})^\times$);
\item[(ii)] Define the `ord' double integral by
\begin{align*}
\int_x^y\int_r^s (P)\omega_{\mu}^{\textup{ord}} &\defeq \int_{\Proj_{\pri}}\mathrm{ord}_{\pri}\left(\frac{t_{\pri}-x}{t_{\pri}-y}\right)P(t)d\mu\{r-s\}(t)\\
&=\sum\limits_{e : \textup{red}_{\mathfrak{p}}(x) \rightarrow \textup{red}_{\mathfrak{p}}(y)} \int_{\cU(e)} P(t)d\mu\{r-s\}(t),
\end{align*} 
	where $\mathrm{red}_{\pri}:\uhp_{\pri} \rightarrow \mathcal{T}_{\pri} = \mathcal{E}(\mathcal{T}_{\pri}) \sqcup \mathcal{V}(\mathcal{T}_{\pri})$ is the reduction map and $\cU(e)$ is the corresponding open set of $\Proj_{\pri}$.
\end{itemize}
Here we normalise so that $\mathrm{ord}_{\pri}(p) = 1$, noting that $p$ is a uniformiser in $F_{\pri}$.
\end{definition}
\begin{remark} \label{rem:normedlog}
There is a subtle error in \cite{BW19} in the case $p$ inert: the `log' integral of Def.~6.1 \emph{op.\ cit}.\ would be written $\int\int(P)\omega_\mu^{\mathrm{log},\sigma_{\mathrm{Id}}}$ in the notation of Def.~\ref{defn:doubleintegrals}. However, to properly obtain the connection to $p$-adic $L$-values in \S7.3 \emph{op.\ cit}, it should instead have been the sum
		\[
	\int_x^y\int_r^s (P)\omega_{\mu}^{\textup{log},\sigma_{\mathrm{Id}}} + \int_x^y\int_r^s (P)\omega_{\mu}^{\textup{log},\sigma_{\mathrm{Fr}}} = \int_{\Proj_{\pri}}\log_p\circ\  \mathrm{Norm}_{F_{\pri}/\QQ_{p}}\left(\frac{t_{\pri}-x}{t_{\pri}-y}\right)P(t)d\mu\{r-s\}(t) \]
  (compare \cite[\S5]{Spi14}). Having made this change, the rest of the results in the paper go through without modification. This is also the conceptual reason why the scalar 2 appears in \cite[Prop.~10.2]{BW19}: there is one copy of $\mathcal{L}_p(\widetilde{f})$ for each element of $\mathrm{Gal}(F_{\pri}/\Q_p)$.
\end{remark}
\begin{remark} \label{rem:additiveintegrals}
In \cite{Dar01}, Darmon defines \emph{multiplicative} double integrals attached to classical weight 2 modular forms, and by taking log and ord of the values of this integral, one obtains formula close to the above, justifying the terminology and seemingly disparate definitions. In our setting, the multiplicative double integral itself does not exist, but we can still make sense of its additive counterparts.
\end{remark}
\begin{remark} \label{rem:unramifieduhp}
The second equality in Defn.~\ref{defn:doubleintegrals}(ii) follows from \cite[Lemma 2.5]{BDG04}. Recall that $\uhp_{\pri}^{\mathrm{ur}} \defeq \Proj(\QQ_{p}^{\mathrm{ur}})\backslash\Proj(F_{\pri})$. The reduction map $\mathrm{red}_{\pri}:\uhp_{\pri}^{\mathrm{ur}} \rightarrow \mathcal{T}_{\pri}$ takes values in $\mathcal{V}(\mathcal{T}_{\pri})$ and not in $\mathcal{E}(\mathcal{T}_{\pri})$ (See \cite[\S 2]{BDG04} and the remarks after \cite[Defn.~ 3.8]{RS12}).
\end{remark}
\begin{definition}\label{def:logp}
	For $?$ either $(\textup{log}_{p},\sigma)$ or $(\textup{ord}_{\pri})$, define maps
\begin{align*}
\Phi^{?} : &[\Delta_{0} \otimes \textup{Div}^{0}(\mathcal{H}_{\mathfrak{p}}^{\textup{ur}}) \otimes V_{k,k}] \otimes 	\textup{Hom}(\Delta_{0}, \cD_k(\PP^1_{\pri},L)) \longrightarrow L\\
	&\big[(r - s)\ \otimes \ (x - y) \ \otimes \ \ P \big]\ \otimes \ \mu \ \ \ \  \ \ \ \longmapsto \int_x^y\int_r^s (P)\omega_{\mu}^{?}.
\end{align*}
In light of Remark~\ref{rem:normedlog}, we also define
\[ \Phi^{\textup{log}_{p}} = \Phi^{\textup{log}_p\circ N_{F_\pri/\Qp}} \defeq \sum_{\sigma\in\mathrm{Gal}(F_{\pri}/\Qp)} \Phi^{\textup{log}_{p},\sigma}. \] 
\end{definition}

Since the double integrals $\Phi^{\log_{p},\sigma}$ and $\Phi^{\ord_{\pri}}$ are $\Gamma$-invariant, they
induce pairings
\[ \Phi^{\log_p,\sigma},\ \Phi^{\ord_{\pri}} : (\Delta_{0} \otimes \textup{Div}^{0}(\uhp_{\mathfrak{p}}^{\textup{ur}}) \otimes V_{k,k})_{\Gamma} \otimes \mathbf{MS}_{\Gamma}(L) \rightarrow L, \]
or equivalently as morphisms
\[ \Phi^{\log_p,\sigma},\ \Phi^{\ord_{\pri}} : (\Delta_{0} \otimes \textup{Div}^{0}(\uhp_{\mathfrak{p}}^{\textup{ur}}) \otimes V_{k,k})_{\Gamma} \rightarrow \mathbf{MS}_{\Gamma}(L)^{\vee}. \]
Taking the usual short exact sequence $0 \rightarrow \mathrm{Div}^0 \rightarrow \mathrm{Div} \rightarrow \Z \rightarrow 0$ and tensoring with the flat $\Z$-module $\Delta_0\otimes V_{k,k}$, we obtain an exact sequence
\begin{equation} \label{eqn:divisorexactsequence}
0 \rightarrow \Delta_0 \otimes \textup{Div}^{0}(\uhp_{\pri}^{\textup{ur}}) \otimes V_{k,k} \rightarrow \Delta_0 \otimes \textup{Div}(\uhp_{\pri}^{\textup{ur}}) \otimes V_{k,k} \rightarrow \Delta_0 \otimes V_{k,k} \rightarrow 0,
\end{equation}
and on taking $\Gamma$-homology, we get a connecting morphism
\begin{equation}\label{eqn:homologyconnecting}
\tupH_{1}(\Gamma, \Delta_{0} \otimes V_{k,k}) \xrightarrow{\delta} (\Delta_0 \otimes \textup{Div}^{0}(\uhp^{\textup{ur}}_{\pri}) \otimes V_{k,k})_{\Gamma}.
\end{equation}
Note that via the universal coefficient theorem, we have an identification
\[ 
\tupH_{1}(\Gamma, \Delta_{0} \otimes V_{k,k}) = \tupH^{1}(\Gamma, \Delta(V_{k,k}^\vee))^{\vee} ,
\]
recalling that $\Delta(V) = \mathrm{Hom}(\Delta_0,V)$.
There is a natural action of the Hecke operators on
$\tupH_{1}(\Gamma, \Delta_{0} \otimes V_{k,k})$ making this identification Hecke-equivariant, so that we have an identification
\[
	\tupH_{1}(\Gamma, \Delta_{0} \otimes V_{k,k})_{(f)}=\tupH^{1}(\Gamma,\Delta(V_{k,k}^\vee))_{(f)}^{\vee}.
\]
By \cite[Thm.\ 8.6]{BW19}, the right-hand side is one-dimensional; hence we deduce that $\tupH_{1}(\Gamma, \Delta_{0} \otimes V_{k,k})_{(f)}$ is a one-dimensional $L$-vector space.
\begin{theorem}\label{thm:H1H0} The morphism
\[
\textup{pr}_{f}\circ\Phi^{\ord_{\pri}}\circ\delta : \tupH_{1}(\Gamma, \Delta_{0} \otimes V_{k,k}) \rightarrow \mathbf{MS}_{\Gamma}(L)_{(f)}^{\vee} 
\]
is a surjective map of $L$-vector spaces, where $\textup{pr}_{f}$ is the projection onto the $f$-isotypic component. In particular, after extending scalars to $\Cp$, it induces a Hecke-equivariant isomorphism 
\[  
\tupH_{1}(\Gamma, \Delta_{0} \otimes V_{k,k}(\Cp))_{(f)} \cong \mathbf{MS}_{\Gamma}(\Cp)_{(f)}^{\vee}. 
\]
\end{theorem}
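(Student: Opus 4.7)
The approach exploits the fact that both the source $\tupH_1(\Gamma, \Delta_0 \otimes V_{k,k})_{(f)}$ and the target $\mathbf{MS}_\Gamma(L)_{(f)}^\vee$ are one-dimensional $L$-vector spaces, as established in the paragraph preceding the theorem and in Remark~\ref{rem:multiplicityone} respectively. Consequently, surjectivity of the composite map is equivalent to its non-vanishing on the $f$-isotypic component, and the isomorphism after extending scalars to $\Cp$ follows by dimension count together with Hecke-equivariance. The plan therefore has two substantive pieces: verification of Hecke-equivariance, and verification of non-vanishing.

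I would first verify Hecke-equivariance of $\textup{pr}_f \circ \Phi^{\ord_\pri} \circ \delta$. The connecting morphism $\delta$ arises from the short exact sequence \eqref{eqn:divisorexactsequence} of Hecke- and $\Gamma$-equivariant modules (with Hecke acting on the central and right-hand terms through their divisor/polynomial coefficients in the obvious way), hence is Hecke-equivariant on $\Gamma$-homology by naturality. The pairing $\Phi^{\ord_\pri}$ is Hecke-equivariant by construction: the double coset operators defining the Hecke action on $(\Delta_0 \otimes \textup{Div}^0(\uhp_\pri^{\mathrm{ur}}) \otimes V_{k,k})_\Gamma$ and on $\mathbf{MS}_\Gamma(L)$ are adjoint under the integration, as both are built from the same coset decomposition data. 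It follows that the composite $\textup{pr}_f\circ\Phi^{\ord_\pri}\circ\delta$ kills the non-$f$-isotypic part and restricts to an $L$-linear map between one-dimensional $L$-spaces on the $f$-isotypic components.

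The substantive step is the non-vanishing of this restricted map. The plan is to unwind the connecting homomorphism explicitly: given a representative cycle $\sum_i (r_i - s_i) \otimes P_i$ for a class in $\tupH_1(\Gamma, \Delta_0 \otimes V_{k,k})_{(f)}$, a lift to $\Delta_0 \otimes \textup{Div}(\uhp_\pri^{\mathrm{ur}}) \otimes V_{k,k}$ is obtained by choosing any base point $y \in \uhp_\pri^{\mathrm{ur}}$; the boundary then takes the $\Gamma$-coinvariant form $\sum_i (r_i - s_i)\otimes((y) - (\gamma_i y))\otimes P_i$ for appropriate $\gamma_i \in \Gamma$. Applying Definition~\ref{defn:doubleintegrals}(ii), the pairing with $\mu_f$ becomes a finite sum of integrals of polynomials against $\mu_f$ over compact open sets $\cU(e)$ attached to edges $e$ in the paths between $\mathrm{red}_\pri(y)$ and $\mathrm{red}_\pri(\gamma_i y)$ in $\treep$. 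By the tree-propagation description of $\mu_f$ (Theorem~\ref{thm:overconvergentprojectivemodularsymbol}) and the correspondence between edges and basic open sets, each such local integral is determined by the overconvergent symbol $\psi_f$ via a suitable $\Gamma$-translate, and hence, via the specialisation map $\rho$ of Theorem~\ref{thm:control theorem}, by values of the classical eigensymbol $\phi_f$.

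The principal obstacle lies in this last step: producing an explicit class $c \in \tupH_1(\Gamma, \Delta_0 \otimes V_{k,k})_{(f)}$ whose image pairs visibly non-trivially with $\mu_f$, while correctly tracking the tree-gluing relations defining the extension of $\psi_f$ to $\mu_f$. This parallels the analogous computations in the classical setting of \cite{RS12,Sev1}, and the essential conceptual point is that both $\mu_f$ and a generator of $\tupH_1(\Gamma,\Delta_0 \otimes V_{k,k})_{(f)}$ are ultimately determined by the single underlying eigensymbol $\phi_f$; hence their natural pairing cannot vanish without contradicting the one-dimensionality inputs at the classical level. Having established non-vanishing, surjectivity onto the one-dimensional target follows, and the $\Cp$-extension yields the Hecke-equivariant isomorphism claimed.
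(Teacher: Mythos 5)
Your reduction to non-vanishing is the same first move as the paper (both source and target of the $f$-isotypic restriction are one-dimensional, by the discussion before the theorem and Remark~\ref{rem:multiplicityone}), and the Hecke-equivariance discussion is fine. But the substantive step --- showing $\mathrm{pr}_f\circ\Phi^{\ord_{\pri}}\circ\delta$ is non-zero --- is exactly where your argument has a genuine gap. The claim that the pairing of a generator of $\tupH_1(\Gamma,\Delta_0\otimes V_{k,k})_{(f)}$ with $\mu_f$ ``cannot vanish without contradicting the one-dimensionality inputs at the classical level'' is not valid: a bilinear pairing between two one-dimensional spaces can perfectly well be identically zero, and the fact that both objects are ultimately built from the same eigensymbol $\phi_f$ gives no non-degeneracy for free. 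Indeed the companion map $\Phi^{\log_p,\sigma}\circ\delta$ differs from $\Phi^{\ord_{\pri}}\circ\delta$ only by the scalar $\mathcal{L}^\sigma_{\pri}$ (Corollary~\ref{cor:L-invariant}), so whether such a map vanishes is a genuinely delicate question; unwinding $\delta$ into finite sums of integrals of polynomials over the sets $\cU(e)$, as you propose, just re-expresses the pairing in terms of values of $\phi_f$ without explaining why some value is non-zero.

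The paper supplies this missing input analytically: Lemma~\ref{lem:rohrlich} (i.e.\ \cite[Cor.~7.4]{BW19}) evaluates $\Phi^{\ord_{\pri}}$ at an explicit element $S_\chi$, built from a twist by a finite-order Hecke character $\chi$ with $\chi(\pri)=\omega$, paired against $\mu_f$, and the answer is $C(\chi)\Lambda(f,\chi,k/2+1)$ with $C(\chi)\neq 0$; Rohrlich's non-vanishing theorem then guarantees a choice of $\chi$ making this central $L$-value non-zero. Two further points you do not address are also needed: one must check that $S_\chi$ actually lies in the image of the connecting map $\delta$ (done termwise in the paper, using that $(v/c-\infty)$ and $P_{c,v}$ are fixed by $\gamma_{c,v}$, so each term dies in the $\Gamma$-coinvariants of $\Delta_0\otimes\mathrm{Div}(\uhp_{\pri}^{\mathrm{ur}})\otimes V_{k,k}$), and that evaluation against $\mu_f$ factors through $\mathrm{pr}_f$. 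An alternative, purely algebraic route (interpreting the $\mathcal{L}$-invariant as an endomorphism of the new subspace, as in \cite[Thm.~3.11]{RS12}) is mentioned by the authors as plausible, but your proposal does not carry that out either; as written, the non-vanishing is asserted rather than proved.
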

\begin{proof}
Since the target is one-dimensional by Remark \ref{rem:multiplicityone}, it suffices to prove that the map is non-zero. For this we use the following; see \cite{BW19}, Corollary 7.4.
\begin{lemma} \label{lem:rohrlich}
Let $\chi$ be a finite order Hecke character of $F$ of prime-to-$p$ conductor such that $\chi(\pri) = \omega$, where $c(\pri,f) = -\omega N(\pri)^{k/2}$. Then for each $v \in (\cO_F/c)^\times$ there exist $\gamma_{c,v} \in \Gamma$ and $P_{c,v} \in V_{k,k}$ such that
\[
	\left[\sum_{v \in (\cO_F/c)^\times} \chi(v)\Phi^{\mathrm{ord}_{\pri}}\bigg((v/c - \infty) \otimes (\tau - \gamma_{c,v}\tau) \otimes P_{c,v}\bigg)\right] (\mu_f) = C(\chi)\Lambda(f,\chi,k/2+1),
\]
for some $\tau \in \uhp_\pri$ and where $C(\chi)$ is an explicit non-zero scalar.
\end{lemma}

By a theorem of Rohrlich (in the introduction of \cite{Roh91}), this central critical $L$-value is non-zero for all but finitely many $\chi$, and hence it is possible to choose $\chi$ satisfying the conditions such that the right-hand side is non-zero. The left-hand side is $\Phi^{\mathrm{ord}_{\pri}}$ evaluated at the element 
\[
	S_\chi := \sum_{v\in(\cO_F/c)^\times} \chi(v)\bigg[(v/c-\infty)\otimes (\tau - \gamma_{c,v}\tau) \otimes P_{c,v}\bigg].
\] 
Evaluating at $\mu_f$ factors through the projection to the $f$-isotypic part, so $\mathrm{pr}_f \circ \Phi^{\mathrm{ord}_{\pri}}$ is non-zero.

To complete the proof of surjectivity, it remains to prove that the element $S_\chi$ is in the image of $\delta$. Since this is the connecting map in the long exact sequence of homology, this is equivalent to proving that $S_\chi$ is in the kernel of
\[
	(\Delta_0\otimes \mathrm{Div}^0(\uhp^{\mathrm{ur}}_{\pri}) \otimes V_{k,k})_\Gamma \longrightarrow (\Delta_0\otimes \mathrm{Div}(\uhp^{\mathrm{ur}}_{\pri}) \otimes V_{k,k})_\Gamma.
\]
We do this termwise. From the construction of \cite[Defn.~ 7.1]{BW19}, both $(v/c - \infty)$ and $P_{c,v}$ are invariant\footnote{There is a typo here in \cite[Defn.~7.1]{BW19}, where it incorrectly states that $-v/c$ is fixed instead of $v/c$; but this is written correctly later in the paper.} under the action of $\gamma_{c,v} \in \Gamma$, and hence 
\[
	[(v/c-\infty)\otimes \tau \otimes P_{c,v}] = [(v/c-\infty)\otimes \gamma_{c,v}\tau \otimes P_{c,v}]
\]
in the $\Gamma$-coinvariants, and each term of the sum is in the kernel, as required.

For the second claim, note that both $\Phi^{\mathrm{ord}_{\pri}}$ and the maps in the long exact sequence of homology are Hecke-equivariant, and hence $\mathrm{pr}_f \circ \Phi^{\mathrm{ord}_{\pri}}\circ \delta$ is too. Hence it factors through the $f$-isotypic quotient of $\mathrm{H}_1$. We are left with a surjective map
\[  
\tupH_{1}(\Gamma, \Delta_{0} \otimes V_{k,k})_{(f)} \longrightarrow \mathbf{MS}_{\Gamma}(L)_{(f)}^{\vee} 
\]
of 1-dimensional vector spaces, hence the claim.
\end{proof}

Theorem \ref{thm:H1H0} above allows us to recover the Darmon--Orton style cohomological $\mathcal{L}$-invariant defined by Daniel Barrera Salazar and the second author in \cite{BW19}.
\begin{corollary}\label{cor:L-invariant} 
For each embedding $\sigma:F_{\pri}\hookrightarrow L$, there exists a unique $\mathcal{L}^{\sigma}_{\pri} \in \Cp$ such that
\[
	\Phi^{\log_p,\sigma}_{f}\circ\delta = \mathcal{L}^{\sigma}_{\pri}\circ\Phi^{\ord_{\pri}}_{f}\circ\delta : \tupH_{1}(\Gamma,\Delta_{0}\otimes V_{k,k}) \rightarrow \mathbf{MS}_{\Gamma}(L)_{(f)}^{\vee},
\]
where $\Phi^{*}_{f} := \textup{pr}_{f}\circ\Phi^{*}$ for $*\in\{(\log_p,\sigma);\ord_{\pri}\}$. For each prime $\pri$, we have an equality
\[
	\mathcal{L}_{\pri}^{\mathrm{BW}} = \sum_{\sigma} \mathcal{L}_{\pri}^\sigma,
\]
where $\sigma$ ranges over all embeddings and $\mathcal{L}_{\pri}^{\mathrm{BW}}$ is the $\mathcal{L}$-invariant of \cite{BW19}.
\end{corollary}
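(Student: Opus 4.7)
The plan is to exploit the one-dimensionality established in Theorem~\ref{thm:H1H0} to force the desired proportionality. First I would observe that both $\Phi^{\log_p,\sigma}_{f}\circ\delta$ and $\Phi^{\ord_{\pri}}_{f}\circ\delta$ are $L$-linear maps from $\tupH_{1}(\Gamma,\Delta_{0}\otimes V_{k,k})$ into the one-dimensional $L$-vector space $\mathbf{MS}_{\Gamma}(L)_{(f)}^{\vee}$. Both the connecting morphism $\delta$ from the long exact homology sequence attached to (\ref{eqn:divisorexactsequence}) and the integration pairings that define $\Phi^{\log_p,\sigma}$ and $\Phi^{\ord_{\pri}}$ are Hecke-equivariant (the latter by the same formalism as in Theorem~\ref{thm:H1H0}), and the projection $\mathrm{pr}_f$ applied to the target forces both compositions to factor through the $f$-isotypic quotient $\tupH_{1}(\Gamma,\Delta_{0}\otimes V_{k,k})_{(f)}$, which is one-dimensional by the discussion preceding Theorem~\ref{thm:H1H0}.

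Next, Theorem~\ref{thm:H1H0} asserts that $\Phi^{\ord_{\pri}}_f\circ \delta$ is surjective, hence induces an isomorphism between the one-dimensional $f$-isotypic quotient of $\tupH_1$ and $\mathbf{MS}_\Gamma(L)_{(f)}^\vee$. Post-composing $\Phi^{\log_p,\sigma}_{f}\circ\delta$ with the inverse of this isomorphism yields an $L$-linear endomorphism of a one-dimensional $L$-vector space, which is multiplication by a unique scalar; call this scalar $\mathcal{L}^{\sigma}_{\pri}\in L\subseteq\Cp$. This gives the first assertion, both existence and uniqueness.

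For the identification $\mathcal{L}^{\mathrm{BW}}_{\pri}=\sum_{\sigma}\mathcal{L}^\sigma_{\pri}$, I would appeal to Remark~\ref{rem:normedlog} together with Definition~\ref{def:logp}. By the remark, the correctly normalised log integral implicit in \cite{BW19} is
\[
\Phi^{\log_p} \ = \ \sum_{\sigma\in\mathrm{Gal}(F_{\pri}/\Qp)}\Phi^{\log_{p},\sigma},
\]
and $\mathcal{L}^{\mathrm{BW}}_{\pri}$ is by construction the unique scalar with $\Phi^{\log_p}_f\circ\delta=\mathcal{L}^{\mathrm{BW}}_{\pri}\cdotspace\Phi^{\ord_{\pri}}_f\circ\delta$. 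Summing the defining relation $\Phi^{\log_p,\sigma}_f\circ\delta=\mathcal{L}^\sigma_\pri\cdotspace\Phi^{\ord_\pri}_f\circ\delta$ over $\sigma$ and using the non-vanishing of $\Phi^{\ord_\pri}_f\circ\delta$ immediately yields the stated equality.

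The only real subtlety, and the step I would check most carefully, is the Hecke-equivariance required to factor $\Phi^{\log_p,\sigma}_{f}\circ\delta$ through the one-dimensional $f$-isotypic quotient of $\tupH_1(\Gamma,\Delta_0\otimes V_{k,k})$; this should follow by the same functorial argument used for $\Phi^{\ord_\pri}$ in the proof of Theorem~\ref{thm:H1H0}, since the Hecke action commutes with the long exact sequence induced by (\ref{eqn:divisorexactsequence}) and with the double integral pairing evaluated on the Hecke-stable input $\mu_f$.
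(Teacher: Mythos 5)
Your argument for the first assertion matches the paper's approach: both use Hecke-equivariance and the one-dimensionality of $\tupH_{1}(\Gamma,\Delta_{0}\otimes V_{k,k})_{(f)}$ (and of $\mathbf{MS}_\Gamma(L)_{(f)}^\vee$) together with the surjectivity established in Theorem~\ref{thm:H1H0} to force the proportionality, with uniqueness coming from the non-vanishing of $\Phi^{\ord_\pri}_f\circ\delta$. That part is fine.

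The gap is in your treatment of the identity $\mathcal{L}^{\mathrm{BW}}_\pri=\sum_\sigma \mathcal{L}^\sigma_\pri$. You assert that ``$\mathcal{L}^{\mathrm{BW}}_\pri$ is by construction the unique scalar with $\Phi^{\log_p}_f\circ\delta = \mathcal{L}^{\mathrm{BW}}_\pri\cdot\Phi^{\ord_\pri}_f\circ\delta$,'' but this is precisely the statement that requires an argument: the $\mathcal{L}$-invariant of \cite{BW19} is introduced there in a different setting, with the normalisation issue flagged in Remark~\ref{rem:normedlog}, and it is not \emph{a priori} the proportionality constant in the displayed relation. The paper therefore does not invoke the definition; instead it reduces to a single evaluation. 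Since a proportionality between two maps of one-dimensional spaces is determined by its value on any element with nonzero image, the paper picks a preimage of the explicit cycle $S_\chi$ under $\delta$ (the same $S_\chi$ that appears in the proof of Theorem~\ref{thm:H1H0} via Lemma~\ref{lem:rohrlich}), and then invokes the computations of \cite{BW19} together with Remark~\ref{rem:normedlog} to show $\Phi^{\log_p}_f(S_\chi) = \cL^{\mathrm{BW}}_\pri\Phi^{\ord_\pri}_f(S_\chi)$, whence the scalars agree. Your proposal skips this verification step; you should replace the ``by construction'' claim with this evaluation at $S_\chi$ and the citation to the relevant computations in \cite{BW19} (together with the normalisation correction), since that is where the actual content of the identification lies.
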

\begin{proof}
From Theorem~\ref{thm:H1H0}, we know these maps factor through the one-dimensional $f$-isotypic quotient of $\h_1$; and any two such maps must differ by some $\cL_{\pri}^\sigma \in \Cp.$ The uniqueness follows from surjectivity of $\Phi^{\mathrm{ord}_{\pri}}_f\circ\delta.$ To see the second part, note that taking the sum shows that $\Phi_f^{\mathrm{log}_p}\circ \delta = (\sum_\sigma \mathcal{L}_{\pri}^\sigma) \Phi_{f}^{\mathrm{ord}_p} \circ \delta$, for $\Phi_f^{\mathrm{log}_p}$ as in Def.~\ref{def:logp}. To show that this sum is indeed equal to the $\cL$-invariant of \cite{BW19}, it suffices to check this on any value; so we choose any element mapping to $S_\chi$ (as above) under $\delta$. The results of \cite{BW19} (and Remark \ref{rem:normedlog}) show that $\Phi^{\mathrm{log}_{p}}_f(S_\chi) = \cL_{\pri}^{\mathrm{BW}}\Phi^{\ord_{\pri}}_f(S_\chi)$, from which we deduce the result.
\end{proof}
\begin{remark}\label{rem:vectorvaluedL-invariants}
For $p$ inert in $F$, we have a vector-valued $\cL$-invariant $\vec{\cL}_{\pri}\defeq(\cL_{\pri}^{\sigma_{\mathrm{Id}}},\cL_{\pri}^{\sigma_{\mathrm{Fr}}}) \in (\mathbb{C}_{p})^{2}$. 
\end{remark}

\begin{remark}
	We note that it should be possible to formulate a purely algebraic proof of Theorem \ref{thm:H1H0}, without appealing to the deep non-vanishing results of Rohrlich, by generalising the approach of \cite[Thm.\ 3.11]{RS12}. This result -- which is equivalent to ours -- involves instead interpreting the $\cL$-invariant as a Hecke-equivariant endomorphism on the new subspace of modular forms. This in particular means they do not project to the $f$-isotypical space, but only to the new subspace, denoting this projection $\mathrm{pr}_{\mathrm{c}}$. The analogue of their result in our setting would be: there exists a unique endomorphism $\cL_{\Gamma}^\sigma \in \mathrm{End}_{\mathbb{T}_\Gamma}(\mathbf{MS}_{\Gamma}(L)_{\mathrm{new}}^\vee)$ such that 
	\[
			 \mathrm{pr}_{\mathrm{c}} \circ \Phi^{\mathrm{log}_{p},\sigma} \circ \delta = \cL_\Gamma^\sigma \circ \mathrm{pr}_{\mathrm{c}} \circ \Phi^{\mathrm{ord}_{\mathfrak{p}}} \circ \delta.
	\]
	Here $\mathbf{MS}_{\Gamma}(L)_{\mathrm{new}}$ is the new subspace of $\mathbf{MS}_{\Gamma}(L))$, $\mathrm{pr}_{\mathrm{c}}$ is projection on to this space, and $\mathbb{T}_\Gamma$ is an appropriate Hecke algebra acting on this space. If $L$ contains the Hecke field of $f$, then $\cL_\Gamma^\sigma$ preserves the (1-dimensional) $f$-isotypical subspace, upon which it acts by the scalar $\cL_{\pri}^\sigma$ of Corollary \ref{cor:L-invariant}.In this case, $\cL_{\pri}^\sigma \in L.$
\end{remark}

\section{Filtered $(\varphi, N)$-modules} \label{sec:monodromy}
\begin{definition}\label{defn:monodromymodules}
A filtered $(\varphi,N)$ module of rank $r$ over $F_{\pri}$ with coefficients in $L$ is a rank~$r$ $(F_{\pri} \otimes_{\QQ_p} L)$-module $D$ with the following additional structure:
\begin{itemize}
\item \emph{(Filtration)} An exhaustive decreasing filtration $(\mathrm{Fil}^{i}D)_{i \in \ZZ}$ on $D$ by $(F_{\pri} \otimes_{\QQ_p} L)$-submodules;
\item \emph{(Frobenius)} A $\sigma_{\mathrm{Fr}}$ semi-linear isomorphism $\varphi : D \rightarrow D$ (where $\sigma_{\mathrm{Fr}}$ acts trivially on $L$);
\item \emph{(Monodromy)} A $(F_{\pri} \otimes_{\QQ_p} L)$-linear nilpotent operator $N:D\rightarrow D$ such that $N\varphi = p\varphi N$.
\end{itemize}
\end{definition}
Since we take $L/\QQ_{p}$ to be large enough to contain the image of all possible embeddings $\sigma:F_{\pri} \hookrightarrow \overline{\QQ}_{p}$, we have
\begin{equation} \label{eqn:decomposition1}
 F_{\pri} \otimes_{\QQ_{p}} L \cong \prod\limits_{\sigma:F_{\pri}\hookrightarrow L} L,\qquad (a \otimes b) \mapsto (\sigma(a)b)_{\sigma:F_{\pri}\hookrightarrow L},
\end{equation}
which gives the decomposition
\begin{equation} \label{eqn:decomposition2}
 D \cong \prod\limits_{\sigma:F_{\pri}\hookrightarrow L}D_{\sigma},\qquad D_{\sigma} := D \otimes_{F_{\pri} \otimes L,\sigma} L,
 \end{equation}
where each $D_{\sigma}$ is an $L$-vector space of dimension $r$ and is equipped with an $L$-linear action of $\varphi^{d}$ (where $d = [F_{\pri}:\QQ_{p}])$ and $N.$ The operator $\varphi$ (on $D$) induces a bijection $D_{\sigma} \sim D_{\sigma\circ\varphi^{-1}}$. In particular, when $p$ splits in $F$ there is just the identity embedding $\sigma_{\mathrm{Id}}:F_{\pri} \hookrightarrow L$. When $p$ is inert in $F$, we have two embeddings viz. the identity $\sigma_{\mathrm{Id}}$ and the lift of the Frobenius $\sigma_{\mathrm{Fr}}$. In this case, $\varphi$ induces a bijection
\begin{equation} \label{eqn:decompositionisomorphism}
D_{\sigma_{\mathrm{Id}}} \sim D_{\sigma_{\mathrm{Fr}}}.
\end{equation} 


\subsection{Monodromy modules from $\pri$-adic integration} \label{subsec:monodromyfrobenius}
We first treat the case when $p$ is inert in $F$. The $p$ split case is similar. We set
\[ \mathbf{D}_{f,L} := \mathbf{MS}_{\Gamma}(L)_{(f)}^{\vee} \oplus \mathbf{MS}_{\Gamma}(L)_{(f)}^{\vee}, \]
which is a two dimensional $L$-vector space (See Remark~\ref{rem:multiplicityone}), and let 
\begin{equation} \label{eqn:monodromyeqn1}
\mathbf{D}_{f} := \mathbf{D}_{f,\sigma_{\mathrm{Id}}} \times \mathbf{D}_{f,\sigma_{\mathrm{Fr}}}
\end{equation}   
where $\mathbf{D}_{f,\sigma_{\mathrm{Id}}}$ and $\mathbf{D}_{f,\sigma_{\mathrm{Fr}}}$ are both $\mathbf{D}_{f,L}$ as $L$-vector spaces but with the scalar action of $F_{\p}$ (as a subfield of $L$) given by the $\sigma_{\mathrm{Id}}$-embedding and $\sigma_{\mathrm{Fr}}$-embedding respectively, giving $\mathbf{D}_{f}$ the structure of a rank two $(F_{\pri} \otimes_{\QQ_{p}} L)$-module. We also equip $\mathbf{D}_{f}$ with:
\begin{itemize}
\item A \emph{Filtration} on $\mathbf{D}_{f}$ by $(F_{\pri} \otimes_{\QQ_{p}} L)$-submodules given by
\[ \mathbf{D}_{f} = \mathrm{Fil}^{0} \supsetneq \mathrm{Fil}^{1} = \ldots = \mathrm{Fil}^{k+1} \supsetneq \mathrm{Fil}^{k+2} = 0, \]
where
\[
\mathrm{Fil}^{\frac{k+2}{2}}\mathbf{D}_{f} := \{ [(-\mathcal{L}^{\sigma_{\mathrm{Id}}}_{\pri}x_{\sigma_{\mathrm{Id}}},x_{\sigma_{\mathrm{Id}}}),(-\mathcal{L}^{\sigma_{\mathrm{Fr}}}_{\pri}x_{\sigma_{\mathrm{Fr}}},x_{\sigma_{\mathrm{Fr}}})] : (-\cL_{\pri}^{\sigma}x_{\sigma},x_{\sigma}) \in \mathbf{D}_{f,\sigma}\}.
\]
\item A \emph{Frobenius} $\varphi$ on $\mathbf{D}_{f}$ defined as follows; writing $\mathbf{D}_{f}$ as 
\[ \mathbf{D}_{f} = \{(x_{\sigma_{\mathrm{Id}}},y_{\sigma_{\mathrm{Id}}},x_{\sigma_{\mathrm{Fr}}},y_{\sigma_{\mathrm{Fr}}})|(x_{\sigma},y_{\sigma}) \in \mathbf{D}_{f,\sigma}\}, \]
we define $\varphi : \mathbf{D}_{f} \rightarrow \mathbf{D}_{f}$ as
\[ \varphi : \big(x_{\sigma_{\mathrm{Id}}},y_{\sigma_{\mathrm{Id}}},x_{\sigma_{\mathrm{Fr}}},y_{\sigma_{\mathrm{Fr}}}\big) \longmapsto \big(U_{\pri}(x_{\sigma_{\mathrm{Fr}}}),pU_{\pri}(y_{\sigma_{\mathrm{Fr}}}),U_{\pri}(x_{\sigma_{\mathrm{Id}}}),pU_{\pri}(y_{\sigma_{\mathrm{Id}}})\big). \]
Note that $\varphi$ interchanges $\mathbf{D}_{f,\sigma_{\mathrm{Id}}}$ and $\mathbf{D}_{f,\sigma_{\mathrm{Fr}}}$. 
\item A \emph{Monodromy} $N$ on $\mathbf{D}_{f}$ defined by $N(x, y) = (y, 0)$. 
\end{itemize}

This gives $\mathbf{D}_{f}$ the structure of a rank two filtered $(\varphi, N)$-module over $F_{\pri}$ with coefficients in $L$, which we also write as $\mathbf{D}_{f} \in \textup{MF}(\varphi, N, F_{\pri}, L)$. Further, $\mathbf{D}_{f,L}$ (viewed as $\mathbf{D}_{f,\sigma}$) attains the structure of a filtered $L$-vector space of dimension two, with the filtration given by 
\begin{equation} \label{eqn:filtartiononsigmacomponents}
\mathrm{Fil}^{j}(\mathbf{D}_{f,L}) = \mathrm{Fil}^{j}(\mathbf{D}_{f,\sigma}) \defeq \mathrm{Fil}^{j}(\mathbf{D}_{f}) \otimes_{F_{\pri} \otimes L, \sigma} L.
\end{equation} 
for $j \in [0, k+2]$.

When $p$ is split in $F$, we have only the identity $\sigma_{\mathrm{Id}}:F_{\pri} \hookrightarrow L$ to consider and as before, we set 
\[ \mathbf{D}_{f} := \mathbf{D}_{f,\sigma_{\mathrm{Id}}} = \mathbf{MS}_{\Gamma}(L)_{(f)}^{\vee} \oplus \mathbf{MS}_{\Gamma}(L)_{(f)}^{\vee} \] 
As above, $\mathbf{D}_{f}$ attains the structure of a filtered $(\varphi, N)$-module over $F_{\pri}\cong \QQ_{p}$ with coefficients in $L$.


\subsection{Fontaine--Mazur theory} \label{subsec:fontainemazur} Let $G_{F} := \mathrm{Gal}(\overline{\QQ}/F)$, and let 
\[ V_{p}(f) : G_{F} \rightarrow \mathrm{GL}_{2}(L) \]
be the two dimensional $p$-adic Galois representation associated to $f$ by the work of Harris--Soudry--Taylor, Taylor and Berger--Harcos (cf. \cite{Harris1993}, \cite{Taylor1994} and \cite{BH07}). We may assume that $V_{p}(f)$ takes values in our base field $L$. Since the Bianchi eigenform $f$ is new at $\pri$ and $\pri$ exactly divides the level $\cN$, we know that the cuspidal automorphic representation $\pi$ of $\GL_{2}(\mathbb{A}_{F})$ that $f$ generates is Steinberg at $\pri$. Conjecturally, the local Galois representation 
\[V_{p}(f)|_{G_{F_{\pri}}} : \mathrm{Gal}(\overline{\QQ}_{p}/F_{\pri}) \rightarrow \GL_{2}(\overline{\QQ}_p)\]
is semi-stable non-crystalline in the sense of $p$-adic Hodge theory. Assuming this, Fontaine's semi-stable Dieudonn\'{e} module  
\[ 
\mathbb{D}_{f} := \mathbb{D}_{\mathrm{st}}(V_{p}(f)) := (B_{\mathrm{st},F_{\pri}} \otimes_{\QQ_{p}} V_{p}(f)|_{G_{F_{\pri}}})^{\mathrm{Gal}(\overline{\QQ}_{p}/F_{\pri})} 
\]
is then an \textit{admissible} rank two filtered $(\varphi, N)$-module over $(F_{\pri} \otimes_{\QQ_{p}} L)$. We choose a pair of $\varphi$-eigenvectors $u_{1}, u_{2} \in \mathbb{D}_{f}$ such that $N(u_{1}) = u_{2}$. Then $\mathbb{D}_{f} \cong (F_{\pri} \otimes_{\QQ_{p}} L)u_{1} \oplus (F_{\pri} \otimes_{\QQ_{p}} L)u_{2}$ as an $(F_{\pri} \otimes_{\QQ_{p}} L)$-module, with filtration given by
\[\mathbb{D}_{f} = \mathrm{Fil}^{0}\supsetneq \mathrm{Fil}^{1} =\ldots= \mathrm{Fil}^{k+1}\supsetneq \mathrm{Fil}^{k+2}=0  \]
such that $\{\mathrm{Fil}^{i}\}_{1}^{k+1}$ is a rank one $(F_{\pri} \otimes_{\QQ_{p}} L)$-submodule of $\mathbb{D}_{f}$. The vector valued \textit{Fontaine--Mazur} $\mathcal{L}$-invariant, denoted $\mathcal{L}_{\mathrm{FM}} = \lbrace \mathcal{L}_{\mathrm{FM}}^{\sigma} \rbrace_{\sigma : F_{\pri}\hookrightarrow L}$, is defined to be the unique element such that
\[ \mathrm{Fil}^{\frac{k+2}{2}}(\mathbb{D}_{f}) = F_{\pri} \otimes_{\QQ_{p}} L\cdot(u_{1} - \mathcal{L}_{\mathrm{FM}}\cdot u_{2}).\] 

For each embedding $\sigma:F_{\pri} \hookrightarrow L$, $\mathbb{D}_{f,\sigma} \defeq \mathbb{D}_{f} \otimes_{F_{\pri}\otimes L, \sigma} L$ is a filtered $L$-vector space of dimension two with the filtration given by 
\begin{equation} \label{eqn:filtrationfontainemazur}
\mathrm{Fil}^{j}(\mathbb{D}_{f,\sigma}) = \mathrm{Fil}^{j}(\mathbb{D}_{f}) \otimes_{F_{\pri} \otimes L, \sigma} L
\end{equation}
for $j \in [0, k+2].$
\begin{conjecture}[Trivial zero conjecture] \label{conj:trivialzeroconjecture} 
For each $\sigma:F_{\pri}\hookrightarrow L$, the cohomological $\mathcal{L}$-invariant $\mathcal{L}^{\sigma}_{\pri}$ is the Fontaine--Mazur $\mathcal{L}$-invariant $\mathcal{L}_{\mathrm{FM}}^{\sigma}$ attached to $V_{p}(f)$.
\end{conjecture}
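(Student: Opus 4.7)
Since the base-change case is handled in Lemma~\ref{lem:basechangetrivialzero}, the substance of the conjecture is the non-base-change case, for which the natural strategy is a Greenberg--Stevens style argument: deform $f$ in a $p$-adic family and compute the derivative of the $U_\pri$-eigenvalue in two distinct ways so that it produces $\mathcal{L}_\pri^\sigma$ on the one hand and $\mathcal{L}_{\mathrm{FM}}^\sigma$ on the other.

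First I would place $f$ in a $p$-adic family on a Bianchi eigenvariety $\mathcal{X}$ (along the lines of the constructions of Hansen or Urban). The $U_\pri$-eigenvalue extends to an analytic function $a_\pri(\kappa)$ on a neighbourhood of the point $x_f \in \mathcal{X}$ corresponding to $f$, with $a_\pri(x_f) = \pm N(\pri)^{k/2}$ sitting at the boundary of the classical slope condition. I would then compute $\partial a_\pri/\partial\kappa|_{x_f}$ in two ways. \emph{Automorphically}, via the overconvergent modular symbols and double integrals of \S\ref{sec:integration}, the logarithmic derivative should recover the cohomological $\mathcal{L}$-invariant $\mathcal{L}_\pri^\sigma$; concretely, Corollary~\ref{cor:L-invariant} would be reinterpreted as a limit of classical interpolation formulae (this is the analogue of what Seveso does in \cite{Sev1}). \emph{Galois-theoretically}, one deforms $V_p(f)|_{G_{F_\pri}}$ in its universal semistable family and invokes Colmez's formula reading off the Fontaine--Mazur $\mathcal{L}$-invariant from the trianguline parameter of the associated $(\varphi,\Gamma)$-module. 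Matching the two computations through an interpolation formula for the Bianchi $p$-adic $L$-function in the family would yield $\mathcal{L}_\pri^\sigma = \mathcal{L}_{\mathrm{FM}}^\sigma$.

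The hard part is the geometry of the Bianchi eigenvariety. Unlike over a totally real field, the defect $\ell_0 = 1$ of the associated locally symmetric space means that a cuspidal Bianchi form need not deform into a positive-dimensional Hida family: the classical non-CM point $x_f$ may be isolated, or at best lie on a non-reduced component, in which case $\partial a_\pri/\partial\kappa$ is ill-defined in the naive sense. Overcoming this likely requires either a derived-deformation perspective, a direct cohomological comparison bypassing families altogether, or the introduction of an auxiliary family of (anticyclotomic) character twists providing extra variation. A further subtlety when $p$ is inert is that there are two independent $\mathcal{L}$-invariants $\mathcal{L}_\pri^{\sigma_{\mathrm{Id}}}$ and $\mathcal{L}_\pri^{\sigma_{\mathrm{Fr}}}$ to pin down but only a single parallel-weight direction in which to differentiate; separating the two would plausibly require combining weight variation with character-twist variation along two independent axes, in the spirit of Spiess's work for Hilbert modular forms. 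These obstacles are, in my view, precisely why the authors leave the general case as Conjecture~\ref{conj:trivialzeroconjecture} rather than a theorem.
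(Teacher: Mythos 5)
You have correctly recognized that the statement you were asked to prove is stated in the paper as a \emph{conjecture}, not a theorem, so there is no paper-side proof to compare against. The only thing the paper actually proves here is the base-change case, Lemma~\ref{lem:basechangetrivialzero}: when $f$ is the base change of a classical form $\widetilde f$, one combines the Galois-invariance $\mathcal{L}_\pri^{\sigma_{\mathrm{Id}}}=\mathcal{L}_\pri^{\sigma_{\mathrm{Fr}}}$ (from the cited result of Gehrmann), the relation $\mathcal{L}_\pri^{\sigma_{\mathrm{Id}}}+\mathcal{L}_\pri^{\sigma_{\mathrm{Fr}}}=2\mathcal{L}_p(\widetilde f)$ coming from Corollary~\ref{cor:L-invariant} together with \cite[Prop.~10.2]{BW19}, the classical equality $\mathcal{L}_p(\widetilde f)=\mathcal{L}_{\mathrm{FM}}(\widetilde f)$, and the stability of $\mathcal{L}_{\mathrm{FM}}$ under base change. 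You cite this lemma but do not reproduce its argument.

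What you supply for the general case is a Greenberg--Stevens-style research program, not a proof, and you are admirably honest about this. Your diagnosis of the obstructions is accurate and is essentially why the authors leave this as a conjecture: because the relevant locally symmetric space has defect $\ell_0=1$, a non-CM cuspidal Bianchi point need not lie on a positive-dimensional component of the eigenvariety, so the derivative $\partial a_\pri/\partial\kappa$ that the Greenberg--Stevens method differentiates need not exist in the naive sense; and for $p$ inert there are two $\mathcal{L}$-invariants $\mathcal{L}_\pri^{\sigma_{\mathrm{Id}}}$, $\mathcal{L}_\pri^{\sigma_{\mathrm{Fr}}}$ but only one parallel-weight direction. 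These are genuine open problems, not gaps you have overlooked. The one thing to flag is a matter of framing rather than mathematics: what you have written should not be labelled a proof proposal, since none of the three escape routes you suggest (derived deformations, a direct cohomological comparison, auxiliary anticyclotomic twists) is developed far enough to constitute even a conditional argument. As a summary of why the conjecture is hard and where one might look, it is a fair reflection of the state of the literature (paralleling the Hilbert case of Spiess, Mok, Bergdall--Dimitrov--Jorza); as a proof, it is not one, and the paper does not claim one either.
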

\begin{remark}
This is related to Conjectures 11.1 and 11.2 of \cite{BW19}. Analogous results are known for classical modular forms (initially \cite{GS93}, but intensively studied since; see the introduction to \cite{BDI10} or \cite[\S 4]{Colmez05} for a comprehensive survey of the literature) and Hilbert modular forms (see e.g.\ \cite{Spi14},\cite{Mok09} and \cite{BDJ17}). 
\end{remark}

\begin{lemma} \label{lem:basechangetrivialzero}
Suppose that $f$ is the base-change to $F$ of a classical modular form $\widetilde{f}$. Then Conjecture~\ref{conj:trivialzeroconjecture} is true.
\end{lemma}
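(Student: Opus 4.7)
The plan is to reduce both invariants attached to $f$ to the corresponding classical ones of $\widetilde f$, and then invoke the classical trivial zero theorem for classical modular forms (\cite{GS93}; see \cite{BDI10} for the equality of all relevant flavours of classical $\mathcal{L}$-invariant).

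For the Fontaine--Mazur side, base-change of Galois representations gives $V_p(f) = V_p(\widetilde f)|_{G_F}$, and since $f$ is Steinberg at $\pri$ the classical form $\widetilde f$ must itself be Steinberg at $p$. Applying $\mathbb{D}_{\mathrm{st}}$ then yields a canonical isomorphism of filtered $(\varphi, N)$-modules
\[
\mathbb{D}_f \;\cong\; F_\pri \otimes_{\QQ_p} \mathbb{D}_{\widetilde f},
\]
with Frobenius $\sigma_{\mathrm{Fr}} \otimes \varphi_{\widetilde f}$, monodromy $1 \otimes N_{\widetilde f}$, and filtration obtained by extension of scalars. Picking $u_i = 1 \otimes \widetilde u_i$ for the classical $\varphi$-eigenvectors, the classical defining relation $\mathrm{Fil}^{(k+2)/2}\mathbb{D}_{\widetilde f} = L\cdot(\widetilde u_1 - \mathcal{L}(\widetilde f)\widetilde u_2)$ propagates by scalar extension to $\mathrm{Fil}^{(k+2)/2}\mathbb{D}_f = (F_\pri \otimes L)\cdot\bigl(u_1 - (1 \otimes \mathcal{L}(\widetilde f))u_2\bigr)$. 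Under the decomposition (\ref{eqn:decomposition1}) the element $1 \otimes \mathcal{L}(\widetilde f)$ has every $\sigma$-component equal to $\mathcal{L}(\widetilde f)$, so $\mathcal{L}^\sigma_{\mathrm{FM}} = \mathcal{L}(\widetilde f)$ for every embedding $\sigma$.

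For the cohomological side, the base-change hypothesis allows one to construct $\mu_f$ explicitly from the overconvergent eigensymbol of $\widetilde f$, using the diagonal embedding $\mathrm{GL}_2/\QQ \hookrightarrow \mathrm{Res}_{F/\QQ}\mathrm{GL}_2$. This compatibility induces a factorisation of the relevant $p$-adic $L$-value of $f$ in terms of classical $p$-adic $L$-values of $\widetilde f$ and of its twist by the quadratic character $\chi_{F/\QQ}$. Substituting this factorisation into the exceptional zero formula of \cite{BW19}, which identifies $\mathcal{L}_{\pri}^{\mathrm{BW}} = \sum_\sigma \mathcal{L}^\sigma_\pri$ with the logarithmic derivative of $L_p(f)$ at the central trivial zero, and applying the Greenberg--Stevens exceptional zero formula for $L_p(\widetilde f)$ (and its twist), one obtains $\sum_\sigma \mathcal{L}^\sigma_\pri = [F_\pri : \QQ_p]\cdot \mathcal{L}(\widetilde f)$. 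In the inert case, the natural $\mathrm{Gal}(F_\pri/\QQ_p)$-symmetry visible on $\mathbb{D}_f = F_\pri \otimes \mathbb{D}_{\widetilde f}$ transports through the integration pairing to force $\mathcal{L}^{\sigma_{\mathrm{Id}}}_\pri = \mathcal{L}^{\sigma_{\mathrm{Fr}}}_\pri$; combined with the previous identity this gives $\mathcal{L}^\sigma_\pri = \mathcal{L}(\widetilde f)$ for each $\sigma$.

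Combining these two reductions with the classical equality $\mathcal{L}(\widetilde f) = \mathcal{L}_{\mathrm{FM}}(\widetilde f)$ completes the argument. The main technical obstacle is the analytic step: one must make the ``Asai-style'' relationship between $\mu_f$ and the classical overconvergent symbol of $\widetilde f$ precise enough to justify both the factorisation of $L_p(f)$ at the relevant character, and to track the $\mathrm{Gal}(F_\pri/\QQ_p)$-equivariance through the double-integral pairings of Definition \ref{defn:doubleintegrals}. This parallels the reductions worked out in the Hilbert modular setting \cite{Spi14,BDJ17}, and one expects the same strategy to transplant essentially unchanged.
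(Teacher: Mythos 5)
Your Fontaine--Mazur half is sound and is essentially the paper's argument spelled out: the paper simply cites the stability of $\cL_{\mathrm{FM}}$ under base-change (\cite[\S 4.1]{RS12}), which amounts to your observation that $\mathbb{D}_f \cong F_{\pri}\otimes_{\Qp}\mathbb{D}_{\widetilde f}$ with filtration extended by scalars, so every $\sigma$-component of $\cL_{\mathrm{FM}}$ equals $\cL_{\mathrm{FM}}(\widetilde f)$. The problems are in your analytic half. First, the ``Asai-style'' comparison of $\mu_f$ with the classical overconvergent symbol of $\widetilde f$, the resulting factorisation of the $p$-adic $L$-value, and the application of Greenberg--Stevens are exactly the hard content, and you leave them as an expectation (``one expects the same strategy to transplant essentially unchanged''), so as written this is a gap rather than a proof. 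It is also unnecessary: the identity you are after, $\cL_{\pri}^{\sigma_{\mathrm{Id}}} + \cL_{\pri}^{\sigma_{\mathrm{Fr}}} = 2\cL_p(\widetilde f)$ in the inert case, already follows by combining Corollary~\ref{cor:L-invariant} of this paper (which gives $\cL_{\pri}^{\mathrm{BW}} = \sum_\sigma \cL_{\pri}^{\sigma}$) with \cite[Prop.~10.2]{BW19}, where the comparison with $\cL_p(\widetilde f)$ for base-change forms is already proved; this is precisely the route the paper takes.

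Second, and more seriously, your argument for $\cL_{\pri}^{\sigma_{\mathrm{Id}}} = \cL_{\pri}^{\sigma_{\mathrm{Fr}}}$ is circular as stated. The invariants $\cL_{\pri}^{\sigma}$ are defined purely analytically, via the double integrals and the monodromy module $\mathbf{D}_f$; the $\mathrm{Gal}(F_{\pri}/\Qp)$-symmetry you invoke lives on the Galois-theoretic module $\mathbb{D}_f = F_{\pri}\otimes_{\Qp}\mathbb{D}_{\widetilde f}$, and transporting it ``through the integration pairing'' presupposes an identification of $\mathbf{D}_f$ with $\mathbb{D}_f$ compatible with all structures --- which is essentially the trivial zero conjecture you are trying to prove. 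The correct input is a symmetry on the automorphic side: since $f$ is base-change, its eigensymbol (hence $\mu_f$ and the double integrals) is invariant under the action induced by $\mathrm{Gal}(F/\Q)$, which swaps the two embeddings at an inert prime and hence forces equality of the two cohomological $\cL$-invariants; the paper cites exactly such a Galois-invariance result (\cite[Lemma 3.1]{Lennart2019}). With that substitution, and with the citation of \cite[Prop.~10.2]{BW19} replacing your unproven factorisation, your outline collapses to the paper's proof.
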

\begin{proof}
Let $\cL_{p}(\widetilde{f})$ be the Darmon--Orton analytic $\cL$-invariant associated to the classical form $\widetilde{f}$. Note we have an equality $\cL_{p}(\widetilde{f}) = \cL_{\mathrm{FM}}(\widetilde{f})$ (see e.g. \ \cite[\S 4]{Colmez05}). When $p$ is inert in $F$, we have the following equality of cohomological $\cL$-invariants
\[ \cL_{\pri}^{\sigma_{\mathrm{Id}}} = \cL_{\pri}^{\sigma_{\mathrm{Fr}}} = \cL_{p}(\widetilde{f})\]
where the first equality follows by the Galois invariance of \cite[Lemma 3.1]{Lennart2019} and the second equality follows since $\mathcal{L}_\pri^{\sigma_{\mathrm{Id}}} + \cL_\pri^{\sigma_{\mathrm{Fr}}} = 2\cL_p(\widetilde{f})$ by Corollary \ref{cor:L-invariant} and \cite[Prop.~10.2]{BW19}. Similarly, we have an analogous equality of the Fontaine--Mazur $\cL$-invariants
\[ \cL_{\mathrm{FM}}^{\sigma_{\mathrm{Id}}} = \cL_{\mathrm{FM}}^{\sigma_{\mathrm{Fr}}} = \cL_{\mathrm{FM}}(\widetilde{f}) \]
since the Fontaine--Mazur $\cL$-invariant is stable under base--change (see e.g.\ \cite[\S 4.1]{RS12}). Putting this together gives the required equality for $p$ inert. The split case is handled similarly.
\end{proof}
Note that the jumps in filtration for both the monodromy modules $\mathbf{D}_{f}$ and $\mathbb{D}_{f}$ occur at $(0,k+1)$. Further, the Frobenius operators $\varphi_{\mathbf{D}_{f}}$ and $\varphi_{\mathbb{D}_{f}}$ also coincide. In particular, if we assume Conjecture~\ref{conj:trivialzeroconjecture} (in addition to the semi-stability of $\rho_f$) then we have:
\begin{theorem} \label{thm:identifcationofmonodromymodules}
There is an isomorphism
\[ \mathbf{D}_{f} \cong \mathbb{D}_{f}, \]
of filtered $(\varphi, N)$-modules over $F_{\pri}$, with coefficients in $L$, which is stable under base--change.
\end{theorem}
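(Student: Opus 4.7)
The plan is to build an explicit $(F_\pri \otimes_{\Q_p} L)$-linear bijection $\Psi: \mathbf{D}_f \to \mathbb{D}_f$ by matching preferred bases, then check compatibility with the filtration, Frobenius, and monodromy structures. For the first step, I would fix a non-zero $v$ in the one-dimensional space $\mathbf{MS}_\Gamma(L)_{(f)}^\vee$ (Remark \ref{rem:multiplicityone}) and write $e_1 = (v,0)$, $e_2 = (0,v)$ inside each $\mathbf{D}_{f,\sigma}$; these give an $(F_\pri \otimes L)$-basis of $\mathbf{D}_f$ with $N(e_1) = e_2$ and $N(e_2) = 0$ by construction. On the Galois-theoretic side, use the basis $u_1, u_2$ of $\mathbb{D}_f$ fixed in \S\ref{subsec:fontainemazur}, where $u_1$ is a $\varphi$-eigenvector and $N(u_1) = u_2$. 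Define $\Psi(e_1) = u_1$ and $\Psi(e_2) = u_2$, extended $(F_\pri \otimes L)$-linearly.

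By construction $\Psi$ intertwines the monodromy operators, so the remaining content is $\varphi$-equivariance and filtration compatibility. For Frobenius, recall that $\varphi$ on $\mathbf{D}_f$ acts by $U_\pri = c(\pri,f)$ on the $e_1$-coordinate and by $p \cdot c(\pri,f)$ on the $e_2$-coordinate (in the inert case it additionally swaps the $\sigma_{\mathrm{Id}}$- and $\sigma_{\mathrm{Fr}}$-components, giving the prescribed $\sigma_{\mathrm{Fr}}$-semilinear structure). Since $f$ is $\pri$-new, $\pi_\pri$ is Steinberg, and the Frobenius eigenvalues on $u_1, u_2$ coming from the local Galois representation at $\pri$ are the very same scalars, so $\Psi$ is $\varphi$-equivariant. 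For the filtration, both modules have jumps only at $0$ and $k+1$ with $\mathrm{Fil}^{(k+2)/2}$ a rank one $(F_\pri \otimes L)$-submodule; on the $\mathbf{D}_f$ side the $\sigma$-component is spanned by $-\mathcal{L}_\pri^\sigma e_1 + e_2$ (see \eqref{eqn:filtartiononsigmacomponents}), while on the $\mathbb{D}_f$ side the $\sigma$-component is spanned by $u_1 - \mathcal{L}_{\mathrm{FM}}^\sigma u_2$. These lines coincide under $\Psi$ exactly when $\mathcal{L}_\pri^\sigma = \mathcal{L}_{\mathrm{FM}}^\sigma$ for every embedding $\sigma$, which is the content of Conjecture \ref{conj:trivialzeroconjecture}; absorbing the sign in $-\mathcal{L}_\pri^\sigma$ versus $-\mathcal{L}_{\mathrm{FM}}^\sigma$ into the convention for $u_2$ versus $-u_2$ causes no issue.

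For base-change stability, I would argue that when $f$ is the base-change of a classical eigenform $\widetilde{f}$, both modules are compatible with the corresponding objects attached to $\widetilde{f}$: on the Galois side this is the functoriality of $\mathbb{D}_{\mathrm{st}}$ under restriction of Galois groups, and on the cohomological side it follows from the compatibility of Hecke eigensystems under base-change together with the explicit recipe of \S\ref{subsec:monodromyfrobenius} (so that the scalars $c(\pri,f)$ and $\mathcal{L}_\pri^\sigma$ agree with their classical counterparts, using Lemma \ref{lem:basechangetrivialzero} for the $\mathcal{L}$-invariant). Choosing $v$, $u_1$ and $u_2$ so as to be pulled back from $\widetilde{f}$ under these restriction maps, the isomorphism $\Psi$ then commutes with base-change automatically. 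The main technical obstacle I anticipate is not conceptual but bookkeeping: reconciling the sign and normalisation conventions on the two sides (particularly the factor of $p$ in the second $\varphi$-coordinate versus $N\varphi = p\varphi N$, and the precise meaning of $\mathcal{L}_\pri^\sigma$ versus $\mathcal{L}_{\mathrm{FM}}^\sigma$ as elements attached to a specific splitting of the filtration) so that the equality of $\mathcal{L}$-invariants yields filtration compatibility on the nose rather than up to a scalar.
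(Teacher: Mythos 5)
Your overall strategy---fixing explicit bases, sending one to the other, and checking monodromy, Frobenius and filtration, with Conjecture~\ref{conj:trivialzeroconjecture} supplying the filtration step---is in substance the same comparison the paper invokes: its proof is a one-line citation to \cite[Prop.~4.6]{RS12}, which carries out precisely this kind of matching. However, as written your basis matching contradicts the paper's conventions. With $N(x,y)=(y,0)$ on $\mathbf{D}_f$ and $e_1=(v,0)$, $e_2=(0,v)$, one has $N(e_2)=e_1$ and $N(e_1)=0$, not $N(e_1)=e_2$. Consequently the map $e_1\mapsto u_1$, $e_2\mapsto u_2$ intertwines neither the monodromy operators (it carries $\ker N_{\mathbf{D}_f}$ onto the line through $u_1$, on which $N_{\mathbb{D}_f}$ is nonzero) nor the filtrations: the line spanned by $-\mathcal{L}^{\sigma}_{\pri}e_1+e_2$ is sent to the span of $-\mathcal{L}^{\sigma}_{\pri}u_1+u_2$, which agrees with the span of $u_1-\mathcal{L}^{\sigma}_{\mathrm{FM}}u_2$ exactly when $\mathcal{L}^{\sigma}_{\pri}\,\mathcal{L}^{\sigma}_{\mathrm{FM}}=1$, not when the two $\mathcal{L}$-invariants are equal. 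The repair is immediate---send $e_2\mapsto u_1$ and $e_1\mapsto u_2$ (equivalently, swap the labels of the two coordinates)---after which $N$, the filtration, and the relative $p$-scaling of the two Frobenius eigenvalues line up as you describe; also check the Frobenius in the inert case, where your map must be compatible with the swap of the $\sigma_{\mathrm{Id}}$- and $\sigma_{\mathrm{Fr}}$-components. So this is a fixable bookkeeping slip rather than a wrong approach, but the map you actually wrote down is not an isomorphism of filtered $(\varphi,N)$-modules.

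A second caveat: your assertion that the $\varphi$-eigenvalues of $u_1,u_2$ are ``the very same scalars'' $c(\pri,f)$ and $p\,c(\pri,f)$ is a form of local--global compatibility at $\pri$ that is nowhere proved in this setting (even semistability of $V_p(f)|_{G_{F_{\pri}}}$ is conjectural), and it does not follow merely from $\pi_{\pri}$ being Steinberg. The paper itself simply asserts that the two Frobenii coincide as part of its conjectural framework, so you are no worse off, but this should be flagged as an assumption rather than a deduction. The base-change stability argument is at the same (minimal) level of detail as the paper's.
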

\begin{proof} See \cite[Prop.~ 4.6]{RS12}, which treats the generic case.
\end{proof}
\begin{remark} \label{rem:choiceofp-adiclog}
The definition of both the monodromy modules $\mathbf{D}_{f}$ and $\mathbb{D}_{f}$ depends on the choice of a branch of the $p$-adic logarithm $\mathrm{log}_{p} : \mathbb{C}_{p}^{\times} \rightarrow \mathbb{C}_{p}$. We shall assume that the same choice of the branch has been made for both the monodromy modules, and that for this branch $\mathrm{log}_{p}(p) = 0$.
\end{remark}
Note that the isomorphism of Theorem~\ref{thm:identifcationofmonodromymodules} induces an identification of the tangent spaces
\begin{equation} \label{eqn:isomorphismoftangentspaces}
\frac{\mathbf{D}_{f}}{\mathrm{Fil}^{\frac{k+2}{2}}(\mathbf{D}_{f})} \cong \frac{\mathbb{D}_{f}}{\mathrm{Fil}^{\frac{k+2}{2}}(\mathbb{D}_{f})}.
\end{equation} 
Let $\sigma:F_{\pri}\hookrightarrow L$ be any embedding. Then $\mathbf{D}_{f,\sigma} := \mathbf{D}_{f} \otimes_{F_{\pri} \otimes L,\sigma} L \cong \mathbf{D}_{f,L}$ and $\mathbb{D}_{f,\sigma} := \mathbb{D}_{f} \otimes_{F_{\pri} \otimes L,\sigma} L$ (which we henceforth will simply denote as $\mathbb{D}_{f,L}$) are both two dimensional vector spaces over $L$ with filtrations given by (\ref{eqn:filtartiononsigmacomponents}) and (\ref{eqn:filtrationfontainemazur}) respectively. On tensoring both sides of (\ref{eqn:isomorphismoftangentspaces}) over $F_{\pri}\otimes_{\QQ_{p}} L$ with $L$ via the projection induced by the embedding $\sigma$, we obtain an identification
\begin{equation} \label{eqn:identifcationofmonodromymodules}
\frac{\mathbf{D}_{f,L}}{\mathrm{Fil}^{\frac{k+2}{2}}(\mathbf{D}_{f,L})} \cong \frac{\mathbb{D}_{f,L}}{\mathrm{Fil}^{\frac{k+2}{2}}(\mathbb{D}_{f,L})}.
\end{equation}


\section{$\pri$-adic Abel--Jacobi maps in the Stark--Heegner setting}
For the rest of the paper, we fix an embedding $\sigma:F_{\pri} \hookrightarrow \overline{\QQ}_{p}$ and define a map
\[ 
\Phi_{f} = \Phi_{f,\sigma} \defeq -\Phi^{\mathrm{log},\sigma}_{f} \oplus \Phi^{\mathrm{ord}}_{f} : (\Delta_0 \otimes \textup{Div}^{0}(\uhp_{\pri}^{\textup{ur}}) \otimes V_{k,k})_{\Gamma} \rightarrow \mathbf{D}_{f,L}.
 \]
By Corollary~\ref{cor:L-invariant}, we know that
\begin{align*}
\mathrm{Fil}^{\frac{k+2}{2}}(\mathbf{D}_{f,L}) &= \{ (-\mathcal{L}^{\sigma}_{\pri}x,x) : x \in \mathbf{MS}_{\Gamma}(L)_{(f)}^{\vee}\}\\
& = \mathrm{Im}(\Phi_{f}\circ\delta),
 \end{align*}
where $\delta :\tupH_{1}(\Gamma,\Delta_0 \otimes V_{k,k}) \rightarrow  (\Delta_0 \otimes \textup{Div}^{0}(\uhp_{\pri}^{\textup{ur}}) \otimes V_{k,k})_{\Gamma}$ is the connecting morphism of (\ref{eqn:homologyconnecting}). Recall the $\Gamma$-homology exact sequence
\begin{multline} \label{eqn:homologylongexactseq1}
\cdots \rightarrow \tupH_{i}(\Gamma, \Delta_0 \otimes \textup{Div}^{0}(\uhp_{\pri}^{\textup{ur}}) \otimes V_{k,k})  \rightarrow \tupH_{i}(\Gamma, \Delta_0 \otimes \textup{Div}(\uhp_{\pri}^{\textup{ur}}) \otimes V_{k,k}) \\
\rightarrow \tupH_{i}(\Gamma,\Delta_0 \otimes V_{k,k}) \rightarrow \cdots
\end{multline}
obtained from the short exact sequence (\ref{eqn:divisorexactsequence}). In particular, we have
\begin{equation} \label{eqn:homologylongexactseq2}
 \frac{(\Delta_0 \otimes \textup{Div}^{0}(\uhp_{\pri}^{\textup{ur}}) \otimes V_{k,k})_{\Gamma}}{\delta(\tupH_{1}(\Gamma,\Delta_0 \otimes V_{k,k}))} \xhookrightarrow{\partial_{1}} (\Delta_0 \otimes \textup{Div}(\uhp_{\pri}^{\textup{ur}}) \otimes V_{k,k})_{\Gamma} \xrightarrow{\partial_{2}} (\Delta_0 \otimes V_{k,k})_{\Gamma}.
 \end{equation}

\begin{definition} \label{defn:p-adicAbeliJacobi}
A \emph{$\pri$-adic Abel--Jacobi map} is a morphism
\[ \Phi^{\mathrm{AJ}} : (\Delta_0 \otimes \textup{Div}(\uhp_{\pri}^{\textup{ur}}) \otimes V_{k,k})_{\Gamma} \rightarrow \mathbf{D}_{f,L}/\mathrm{Fil}^{\frac{k+2}{2}}(\mathbf{D}_{f,L}) \]
such that the following diagram is commutative:
\begin{equation}
\xymatrix{
 \frac{(\Delta_0 \otimes \textup{Div}^{0}(\uhp_{\pri}^{\textup{ur}}) \otimes V_{k,k})_{\Gamma}}{\delta(\tupH_{1}(\Gamma,\Delta_0 \otimes V_{k,k}))} \ar@{^{(}->}[d]^{\partial_1} \ar[r]^<<<<<{\Phi_{f}} &     \mathbf{D}_{f,L}/\mathrm{Fil}^{\frac{k+2}{2}}(\mathbf{D}_{f,L}) \\
(\Delta_0 \otimes \textup{Div}(\uhp_{\pri}^{\textup{ur}}) \otimes V_{k,k})_{\Gamma} \ar@{-->}[ur]^{\Phi^{\mathrm{AJ}}} 
}
\end{equation}
In other words, a $\pri$-adic Abel--Jacobi map $\Phi^{\mathrm{AJ}}$ is a lift of the morphism $\Phi_{f}$.
\end{definition}
\begin{remark} \label{rem:abeljacobi}
Such commutative diagrams always exist, but they might not be unique. The chief obstruction comes from the fact that $(\Delta_0 \otimes V_{k,k})_{\Gamma}$ is non-trivial. In fact, let
\[ 
\Phi^{\mathrm{AJ}}_{i} : (\Delta_0 \otimes \textup{Div}(\uhp_{\pri}^{\textup{ur}}) \otimes V_{k,k})_{\Gamma} \rightarrow \mathbf{D}_{f,L}/\mathrm{Fil}^{\frac{k+2}{2}}(\mathbf{D}_{f,L}), 
\]
for $i \in \{1,2\}$, be any two $\pri$-adic Abel--Jacobi maps. Then, by Defn.~\ref{defn:p-adicAbeliJacobi}, we have
\[ 
\Phi^{\mathrm{AJ}}_{1}\big(\partial_1(-)\big) = \Phi_{f}(-) = \Phi^{\mathrm{AJ}}_{2}\big(\partial_1(-)\big). 
\]
In other words, 
\[ 
\mathrm{ker}(\Phi^{\mathrm{AJ}}_{1} - \Phi^{\mathrm{AJ}}_{2}) \supseteq \mathrm{Im}(\partial_1) = \mathrm{ker}(\partial_2),
\]
since $\partial_1$ and $\partial_2$ are connecting maps of the $\Gamma$-homology exact sequence (\ref{eqn:homologylongexactseq2}). In the next section, in Theorem~\ref{thm:padicabeljacobiimageofdarmoncycles}, we will show that the $\pri$-adic Abel--Jacobi image of our Stark--Heegner cycles is independent of the choice of a $\pri$-adic Abel--Jacobi map. 
\end{remark}

\section{Stark--Heegner cycles for Bianchi modular forms} \label{sec:stark-heegnercycles}
In this section, we define \emph{Stark--Heegner cycles}, which should be regarded as higher weight analogues of the Stark--Heegner points on elliptic curves over imaginary quadratic fields defined by Trifkovi\'c in \cite{Tri06}.

\subsection{Construction of homology classes}\label{sec:construction of homology classes}
As before, let $K/F$ be the quadratic extension of relative discriminant $\mathcal{D}_{K}$ prime to $\mathcal{N} = \pri\mathcal{M}$ that satisfies the Stark--Heegner hypothesis. In particular, the completion $K_{\pri}$ of $K$ at the prime $\pri$ is a quadratic unramified extension of $F_{\mathfrak{p}}$. Fix $\delta_{K} \in \mathcal{O}_{K}\backslash\mathcal{O}_{F}$ such that $\delta_{K}^{2} \in \mathcal{O}_{F}$ is a generator of the discriminant ideal (recall that $F$ has class number 1). We may regard $\delta_{K}$ as an element of $K_{\mathfrak{p}}$ via the fixed embedding $\iota_{p}$. Let
\[ 
	\mathcal{R} :=  R \cap M_2(F) = \left\{\smallmatrd{a}{b}{c}{d} \in M_{2}\left(\mathcal{O}_{F}\left[\tfrac{1}{\mathfrak{p}}\right]\right) \big\vert c \in \mathcal{M}\right\}. 
\]  
If we let $\mathcal{R}_{1}^{\times}$ denote the elements of $\mathcal{R}$ of determinant $1$, then $\Gamma$ is the image of $\cR_1^\times$ under projection to $\textup{PGL}_{2}(\mathcal{O}_{F}[1/\mathfrak{p}])$. Let $\mathcal{O}$ be an $\mathcal{O}_{F}[1/\mathfrak{p}]$-order of conductor $\mathcal{C}$ prime to $\mathcal{D}_{K}\mathcal{N}$. 
\begin{definition} \label{defn:optimalembedding}
We say an embedding $\Psi : K \hookrightarrow M_{2}(F)$ is \emph{optimal} if $\Psi(K) \cap \mathcal{R} = \Psi(\mathcal{O})$.   
\end{definition}
Denote the set of $\mathcal{O}_{F}[1/\mathfrak{p}]$-optimal embeddings by $\textup{Emb}(\mathcal{O}, \mathcal{R})$. To an embedding $\Psi \in \textup{Emb}(\mathcal{O}, \mathcal{R})$, we associate the following data.
\begin{itemize}
\item The two points $\tau_{\psi}$ and $\tau_{\Psi}^{\theta} \in \mathcal{H}_{\mathfrak{p}}^{\textup{ur}}(K) := \mathcal{H}_{\mathfrak{p}}^{\textup{ur}} \cap K$ that are fixed by the action of $\Psi(K^{\times})$. Here $\tau_{\Psi}^{\theta} = \theta(\tau_{\Psi})$ for $\theta \in \textup{Gal}(K/F)$ where $\theta \neq \textup{id}$.
\item The fixed vertex $v_{\Psi} \in \mathcal{V}$ in the Bruhat-Tits tree for the action of $\psi(K^{\times})$ on $\mathcal{V}$.
\item The polynomial $P_{\Psi}(X,\overline{X}) := (cX^2 + (a - d)X - b)(\overline{c}\overline{X}^2 + (\overline{a} - \overline{d})\overline{X} - \overline{b})\in V_{2,2}$,
where $\Psi(\delta_{K}) = \smallmatrd{a}{b}{c}{d}$. 
\item Let $u$ be a fixed generator of $\mathcal{O}_{1}^{\times}/\{\textup{torsion}\} \cong \ZZ$ (by Dirichlet's Unit theorem), where $\mathcal{O}_{1} := \{ x \in \mathcal{O}\mid N_{K/F}(x) = 1 \}$. Let $\gamma_{\Psi} := \Psi(u)$ and $\Gamma_{\Psi}$ be the cyclic subgroup of $\Gamma$ generated by $\gamma_{\Psi}$. In particular $\Gamma_{\Psi} = \textup{Stab}(\Psi) \subseteq \Gamma$ and $P_{\Psi} \in (V_{2,2})^{\Gamma_{\Psi}}$.  
\end{itemize}

\begin{remark}
Compare to the data used in \cite[\S7.1]{BW19}; there, objects analogous to those above -- for the \emph{split} embeddings $F\times F \hookrightarrow M_2(F)$ -- were defined directly, with no reference to optimal embeddings. The results \emph{op.\ cit}.\ demonstrate the close relationship between these invariants, the double integrals defined above, and special values of $L$-functions (and $p$-adic $L$-functions) of Bianchi modular forms. In particular, the invariants attached to split embeddings have already appeared in this paper, giving the classes $S_\chi$ used in the proof of Theorem~\ref{thm:H1H0}.
\end{remark}

We say that $\Psi$ has \textit{positive} (resp.\ \emph{negative}) \emph{orientation} if $v_{\Psi} \in \mathcal{V}^{+}$ (resp. $\mathcal{V}^{-}$). Then
\[ \textup{Emb}(\mathcal{O}, \mathcal{R}) = \textup{Emb}^{+}(\mathcal{O}, \mathcal{R}) \sqcup \textup{Emb}^{-}(\mathcal{O}, \mathcal{R}) \]
where $\textup{Emb}^{\pm}(\mathcal{O}, \mathcal{R})$ denotes the set of embeddings with positive/negative orientation. The group $\Gamma$ acts naturally on the set $ \textup{Emb}(\mathcal{O}, \mathcal{R})$ by conjugation, preserving the subsets $ \textup{Emb}^{\pm}(\mathcal{O}, \mathcal{R})$. In fact it can be shown that the association
\[ \Psi \mapsto (\tau_{\Psi}, P_{\psi}, \gamma_{\Psi}) \]
under conjugation by $\gamma \in \Gamma$ satisfies
\begin{equation} \label{eqn:conjugationaction}
(\tau_{\gamma\Psi\gamma^{-1}},P_{\gamma\Psi\gamma^{-1}},\gamma_{\gamma\Psi\gamma^{-1}}) = (\gamma\cdotspace\tau_{\Psi}, \gamma^{-1}\cdotspace P_{\Psi}, \gamma\gamma_{\Psi}\gamma^{-1}).
\end{equation}
Similarly, let $\Psi^{\theta} \in \textup{Emb}(\mathcal{O}, \mathcal{R})$ be the embedding given by $\Psi^{\theta}(-) := \Psi(\theta(-))$ for $\theta \in \textup{Gal}(K/F)$, $\theta \neq \textup{id}$. Then, we have 
\[ 
(\tau_{\Psi^{\theta}},P_{\Psi^{\theta}},\gamma_{\Psi^{\theta}}) = (\tau_{\Psi}^{\theta}, -P_{\Psi}, \gamma_{\Psi}^{-1}).
 \]
Once we fix a cusp $x \in \mathbb{P}^{1}(F)$, we define
\[ 
\mathrm{D} : \textup{Emb}(\mathcal{O},\mathcal{R}) \longrightarrow \Delta_{0} \otimes \textup{Div}(\mathcal{H}_{\mathfrak{p}}^{\textup{ur}}) \otimes V_{k,k},
 \]
\[ 
\mathrm{D}_{\Psi} := \mathrm{D}(\Psi) := (\gamma_{\Psi}\cdotspace x - x)\otimes \tau_{\Psi} \otimes \delta_{K}^{-k/2}P_{\Psi}^{k/2}. 
\]
\begin{lemma} \label{lem:darmoncycles1}
The image of $\mathrm{D}_{\Psi}$ in $(\Delta_{0} \otimes \textup{Div}(\mathcal{H}_{\mathfrak{p}}^{\textup{ur}}) \otimes V_{k,k})_{\Gamma}$, which we denote by $[\mathrm{D}_{\Psi}]$, remains the same if we replace $x$ with any $y \in {\Gamma}x$. Further, $[\mathrm{D}_\Psi]$ is invariant under the conjugation action of $\Gamma$ on $\textup{Emb}(\mathcal{O}, \mathcal{R})$. 
\end{lemma}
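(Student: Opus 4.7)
The plan is to work entirely in the $\Gamma$-coinvariants of $M := \Delta_{0} \otimes \textup{Div}(\mathcal{H}_{\pri}^{\textup{ur}}) \otimes V_{k,k}$, using two basic properties of the data attached to $\Psi$. First, $\gamma_\Psi$ fixes both $\tau_\Psi$ and $P_\Psi^{k/2}$: the former because $\tau_\Psi$ is one of the two fixed points of $\Psi(K^\times)$, and the latter because $P_\Psi \in (V_{2,2})^{\Gamma_\Psi}$ together with the compatibility $\gamma\cdotspace P^{k/2} = (\gamma\cdotspace P)^{k/2}$ between the weight-$k$ action on $V_{k,k}$ and the $(k/2)$-th power of the weight-$2$ action on $V_{2,2}$, which is immediate from the defining formula for the $\GLt$-action on $V_k$. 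Second, the transformation rule \eqref{eqn:conjugationaction} describes how $(\tau_\Psi, P_\Psi, \gamma_\Psi)$ behaves under conjugation by elements of $\Gamma$.

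For the cusp invariance, write $P := \delta_K^{-k/2}P_\Psi^{k/2}$ and set $y = \gamma x$ for some $\gamma \in \Gamma$. Expanding gives
\[
\mathrm{D}_\Psi^{(y)} - \mathrm{D}_\Psi^{(x)} = (\gamma_\Psi\gamma x - \gamma_\Psi x)\otimes \tau_\Psi \otimes P \ - \ (\gamma x - x)\otimes \tau_\Psi \otimes P.
\]
The first summand equals $\gamma_\Psi\cdotspace\bigl[(\gamma x - x)\otimes \tau_\Psi \otimes P\bigr]$, since $\gamma_\Psi$ fixes $\tau_\Psi$ and $P$; hence it is identified with $(\gamma x - x)\otimes \tau_\Psi \otimes P$ in $M_\Gamma$, and the difference vanishes there.

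For the conjugation invariance, write $\Psi' := \gamma\Psi\gamma^{-1}$ and apply $\gamma$ diagonally to $\mathrm{D}_\Psi^{(x)}$. Using $\gamma\gamma_\Psi x = (\gamma\gamma_\Psi\gamma^{-1})(\gamma x) = \gamma_{\Psi'}\cdotspace \gamma x$, $\gamma \tau_\Psi = \tau_{\Psi'}$, and the transformation of $P_\Psi$ in \eqref{eqn:conjugationaction}, one identifies $\gamma\cdotspace \mathrm{D}_\Psi^{(x)}$ with $\mathrm{D}_{\Psi'}^{(\gamma x)}$. Passing to $M_\Gamma$ yields $[\mathrm{D}_\Psi^{(x)}] = [\mathrm{D}_{\Psi'}^{(\gamma x)}]$, and the first part of the lemma (applied to $\Psi'$) lets us replace $\gamma x$ by $x$, giving $[\mathrm{D}_\Psi] = [\mathrm{D}_{\Psi'}]$.

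The one substantive verification is that the twist of $P_\Psi$ predicted by \eqref{eqn:conjugationaction}, once raised to the $(k/2)$-th power, agrees with the diagonal $\gamma$-action on $P_\Psi^{k/2}$ in the weight-$k$ representation $V_{k,k}$. This is a direct polynomial computation from the defining formula for the $\GLt$-action, and once it is in hand, both claims reduce to the standard identity $[m] = [\gamma\cdotspace m]$ in $\Gamma$-coinvariants.
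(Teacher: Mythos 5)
Your proof is correct and is exactly the content behind the paper's own two-line proof: the first claim follows from $\gamma_\Psi$ fixing both $\tau_\Psi$ and $\delta_K^{-k/2}P_\Psi^{k/2}$ (using that raising to the $(k/2)$-th power intertwines the weight-$2$ and weight-$k$ actions) together with the identity $[m]=[\gamma\cdot m]$ in $\Gamma$-coinvariants, and the second follows from the conjugation rule \eqref{eqn:conjugationaction}, just as the paper asserts. Concerning the one verification you defer: computing directly with the left action on $V_k$ defined in \S\ref{sec:bianchi} one finds $P_{\gamma\Psi\gamma^{-1}}=\gamma\cdot P_\Psi$ (the inverse written in \eqref{eqn:conjugationaction} reflects a right-action convention for the same operators), and it is this form of the transformation, combined with compatibility of the $(k/2)$-th power with the weight-$k$ action, that yields the identification $\gamma\cdot\mathrm{D}_\Psi^{(x)}=\mathrm{D}_{\gamma\Psi\gamma^{-1}}^{(\gamma x)}$ on the nose, so your argument goes through.
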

\begin{proof}
The first part of the lemma is easy to see. The second part follows from (\ref{eqn:conjugationaction}).
\end{proof}
In particular, we get a well defined map
\begin{equation} \label{eqn:darmoncycles2}
\mathrm{D} : \Gamma/\textup{Emb}(\mathcal{O},\mathcal{R}) \longrightarrow (\Delta_{0} \otimes \textup{Div}(\mathcal{H}_{\mathfrak{p}}^{\textup{ur}}) \otimes V_{k,k})_{\Gamma}.
\end{equation}
\begin{definition} \label{defn:stark-heegnercycle1}
We call $[\mathrm{D}_\Psi]$ the \emph{Stark--Heegner cycle} attached to the conjugacy class $[\Psi]$.
\end{definition}
\begin{theorem} \label{thm:padicabeljacobiimageofdarmoncycles}
The $\pri$-adic Abel--Jacobi image of the Stark--Heegner cycle $\mathrm{D}_{[\Psi]}$ is independent of the choice of a $\pri$-adic Abel--Jacobi map. That is, if  
\[ 
\Phi^{\mathrm{AJ}}_{i} : (\Delta_0 \otimes \textup{Div}(\uhp_{\pri}^{\textup{ur}}) \otimes V_{k,k})_{\Gamma} \longrightarrow \mathbf{D}_{f,L}/\mathrm{Fil}^{\frac{k+2}{2}}(\mathbf{D}_{f,L}) , \hspace{12pt} i = 1,2
\] 
 are any two $\pri$-adic Abel--Jacobi maps lifting $\Phi_{f}$ in the sense of Defn.~ \ref{defn:p-adicAbeliJacobi}, then 
\[ 
\Phi^{\mathrm{AJ}}_{1}\left(\left[(\gamma_{\Psi}\cdotspace x - x) \otimes \tau_{\Psi} \otimes \delta_{K}^{-k/2}P_{\Psi}^{k/2}\right]\right) = \Phi^{\mathrm{AJ}}_{2}\left(\left[(\gamma_{\Psi}\cdotspace x - x) \otimes \tau_{\Psi} \otimes \delta_{K}^{-k/2}P_{\Psi}^{k/2}\right]\right). 
\]
\end{theorem}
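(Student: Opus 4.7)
The plan is to invoke Remark~\ref{rem:abeljacobi}. Since any two $\mathfrak{p}$-adic Abel--Jacobi maps $\Phi^{\mathrm{AJ}}_1, \Phi^{\mathrm{AJ}}_2$ satisfy $\ker(\Phi^{\mathrm{AJ}}_1 - \Phi^{\mathrm{AJ}}_2) \supseteq \mathrm{Im}(\partial_1) = \ker(\partial_2)$, the theorem reduces to showing that $[\mathrm{D}_\Psi]$ lies in $\ker(\partial_2)$, i.e., that $\partial_2([\mathrm{D}_\Psi]) = 0$ in $(\Delta_0 \otimes V_{k,k})_\Gamma$.

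First I would compute $\partial_2([\mathrm{D}_\Psi])$ explicitly. Since $\partial_2$ arises from the short exact sequence (\ref{eqn:divisorexactsequence}) via the degree map on $\mathrm{Div}(\mathcal{H}_\pri^{\mathrm{ur}}) \to \mathbb{Z}$, and $\deg(\tau_\Psi) = 1$, one obtains
\[
\partial_2([\mathrm{D}_\Psi]) = \bigl[(\gamma_\Psi \cdot x - x)\otimes \delta_K^{-k/2}P_\Psi^{k/2}\bigr] \in (\Delta_0 \otimes V_{k,k})_\Gamma.
\]
The vanishing of this class should be forced by the key symmetries of the data $(\tau_\Psi, P_\Psi, \gamma_\Psi)$ built into Section~\ref{sec:construction of homology classes}, namely that $\gamma_\Psi \in \Gamma_\Psi = \mathrm{Stab}_\Gamma(\Psi)$ fixes $P_\Psi$ and hence acts trivially on $P_\Psi^{k/2}$. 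This invariance gives the formal identity
\[
(\gamma_\Psi x - x) \otimes P_\Psi^{k/2} = (\gamma_\Psi - 1)\cdot\bigl((x) \otimes P_\Psi^{k/2}\bigr),
\]
which shows the class vanishes in the $\Gamma$-coinvariants of the larger module $\mathbb{Z}[\mathbb{P}^1(F)] \otimes V_{k,k}$.

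The main obstacle is that $(x)\otimes P_\Psi^{k/2}$ does not itself lie in $\Delta_0 \otimes V_{k,k}$, so this observation does not immediately imply vanishing in $(\Delta_0 \otimes V_{k,k})_\Gamma$. Via the $\Gamma$-homology long exact sequence associated to $0 \to \Delta_0 \to \mathbb{Z}[\mathbb{P}^1(F)] \to \mathbb{Z} \to 0$ tensored with $V_{k,k}$, the class $\partial_2([\mathrm{D}_\Psi])$ is identified with the connecting-map image of $[\gamma_\Psi \otimes P_\Psi^{k/2}] \in H_1(\Gamma, V_{k,k})$; its vanishing is equivalent, via Shapiro's lemma, to this class arising from $\bigoplus_{[c]} H_1(\Gamma_c, V_{k,k})$ over cusp orbits. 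To make this rigorous---the crux of the proof---I would follow the strategy of \cite[Prop.~4.10]{RS12}: observe that the Abel--Jacobi maps can be taken Hecke-equivariant, so the difference $\Phi^{\mathrm{AJ}}_1 - \Phi^{\mathrm{AJ}}_2$ factors through a Hecke-equivariant map $\alpha \colon (\Delta_0 \otimes V_{k,k})_\Gamma \to \mathbf{D}_{f,L}/\mathrm{Fil}^{\frac{k+2}{2}}(\mathbf{D}_{f,L})$; since the target is $f$-isotypic, $\alpha$ factors through the $f$-isotypic component of $(\Delta_0 \otimes V_{k,k})_\Gamma$, which vanishes for the cuspidal eigenform $f$ by multiplicity one combined with Harder's Eichler--Shimura isomorphism~(\ref{eqn:eichlershimuraharder}), since the $f$-isotypic contribution to $\Gamma$-(co)homology is concentrated entirely in the degree-one piece. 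Thus $\alpha$ annihilates $\partial_2([\mathrm{D}_\Psi])$ and the theorem follows.
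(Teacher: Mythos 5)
Your reduction is exactly the first half of the paper's own proof: you invoke Remark~\ref{rem:abeljacobi}, compute $\partial_2([\mathrm{D}_\Psi]) = [(\gamma_\Psi\cdot x - x)\otimes \delta_K^{-k/2}P_\Psi^{k/2}]$, and use that $\gamma_\Psi$ fixes $P_\Psi^{k/2}$ to see this class dies in $(\mathrm{Div}(\mathbb{P}^1(F))\otimes V_{k,k})_\Gamma$, correctly isolating the crux that a priori it only lies in the image of the connecting map $\tupH_1(\Gamma,V_{k,k})\to(\Delta_0\otimes V_{k,k})_\Gamma$. At that point, however, your argument diverges and has a genuine gap. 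The theorem concerns \emph{arbitrary} lifts in the sense of Definition~\ref{defn:p-adicAbeliJacobi}, which imposes no Hecke-equivariance: a lift can be modified by an arbitrary $L$-linear map factoring through $\partial_2$, so the difference $\Phi^{\mathrm{AJ}}_1-\Phi^{\mathrm{AJ}}_2$ induces an essentially arbitrary map on $\mathrm{Im}(\partial_2)$, and your assertion that it factors through a Hecke-equivariant $\alpha$ is unjustified (that the Abel--Jacobi maps ``can be taken'' Hecke-equivariant says nothing about the two given ones). Your route could therefore at best prove well-definedness within a restricted class of lifts, not the statement as formulated; the only way to get independence for all lifts is to show $\partial_2(\mathrm{D}_{[\Psi]})=0$ outright, which is what you set out to do and then abandon.

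Moreover, the ingredient your alternative would need --- vanishing of the $f$-isotypic part of $(\Delta_0\otimes V_{k,k})_\Gamma$ --- is not delivered by ``multiplicity one plus Harder's Eichler--Shimura'': those results concern the arithmetic group $\Gamma_0(\cN)$, whereas $\Gamma$ here is the $S$-arithmetic group inside $\PGLt(\cO_F[1/\pri])$, and controlling its (co)homology is precisely the content of the paper's Appendix. The paper proves the stronger statement $\tupH_1(\Gamma,V_{k,k})=0$ (Corollary~\ref{cor:vanishingofH1}) via the Mayer--Vietoris sequence for $\Gamma$ acting on the Bruhat--Tits tree, Harder's theorem at levels $\cM$ and $\cN$, and an Ihara-type injectivity of $\mathrm{Res}+\mathrm{Res}'$ resting on the Jacquet--Shalika bound; this immediately forces $\partial_2(\mathrm{D}_{[\Psi]})=0$ and finishes the proof via Remark~\ref{rem:abeljacobi}. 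Any correct completion of your argument would have to supply this vanishing (or at least its $f$-part), so the Appendix-level work cannot be bypassed by the Hecke-equivariance device you propose.
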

\begin{proof}
Note that
\begin{equation} \label{eqn:connecting}
\partial_{2}\left(\left[(\gamma_{\Psi}\cdotspace x - x) \otimes \tau_{\Psi} \otimes \delta_{K}^{-k/2}P_{\Psi}^{k/2}\right]\right) = \left[(\gamma_{\Psi}\cdotspace x - x) \otimes \delta_{K}^{-k/2}P_{\Psi}^{k/2}\right] \in (\Delta^{0} \otimes V_{k,k})_{\Gamma}. 
\end{equation}
Consider the exact sequence
\begin{equation} \label{eqn:divisorexactseq2}
0 \rightarrow \Delta^{0} \otimes V_{k,k} \rightarrow \Delta \otimes V_{k,k} \xrightarrow{\mathrm{deg}} V_{k,k} \rightarrow 0;
\end{equation}
on taking $\Gamma$-homology, we get the connecting morphisms 
\begin{equation} \label{eqn:homologyexactseq3}
\cdots \rightarrow \tupH_{1}(\Gamma,V_{k,k}) \xrightarrow{d^1} (\Delta^{0} \otimes V_{k,k})_{\Gamma} \xrightarrow{d^2} (\Delta \otimes V_{k,k})_{\Gamma} \rightarrow \cdots.
\end{equation}
We show that $[(\gamma_{\Psi}\cdotspace x - x) \otimes \delta_{K}^{-k/2}P_{\Psi}^{k/2}] \in \mathrm{ker}(d^{2})$. By construction, $P_{\Psi}^{k/2}$ is fixed by $\gamma_{\Psi}$. Hence
\[ 
 \left[(\gamma_{\Psi}\cdotspace x) \otimes \delta_{K}^{-k/2}P_{\Psi}^{k/2}\right] = \left[(x) \otimes \delta_{K}^{-k/2}P_{\Psi}^{k/2}\right] 
 \]
as elements in the $\Gamma$-coinvariants $(\Delta \otimes V_{k,k})_{\Gamma}$. In other words,
\[
 \left[(\gamma_{\Psi}\cdotspace x - x) \otimes \delta_{K}^{-k/2}P_{\Psi}^{k/2}\right] \in \mathrm{ker}(d^{2}) = \mathrm{Im}(d^{1}).
 \]
In Appendix~\ref{appendix} (see Corollary~\ref{cor:vanishingofH1}), we show that $\tupH_{1}(\Gamma,V_{k,k}) = 0$. This then forces
\[
\left[(\gamma_{\Psi}\cdotspace x - x) \otimes \delta_{K}^{-k/2}P_{\Psi}^{k/2}\right] = 0 \in (\Delta^{0} \otimes V_{k,k})_{\Gamma}.
 \]
By (\ref{eqn:connecting}), the Stark--Heegner cycle 
\[\mathrm{D}_{[\Psi]} = \left[(\gamma_{\Psi}\cdotspace x - x) \otimes \tau_{\Psi} \otimes \delta_{K}^{-k/2}P_{\Psi}^{k/2}\right] \in \mathrm{ker}(\partial_2)\]
where $\partial_{2} : (\Delta_0 \otimes \textup{Div}(\uhp_{\pri}^{\textup{ur}}) \otimes V_{k,k})_{\Gamma} \rightarrow (\Delta_0 \otimes V_{k,k})_{\Gamma}$ is the connecting morphism in (\ref{eqn:homologylongexactseq2}). The required equality then follows by Remark~\ref{rem:abeljacobi}.
\end{proof}
\begin{definition} \label{defn:starkheegnercohomologyclass}
The \emph{Stark--Heegner cohomology class} associated to a conjugacy class of embeddings $[\Psi] \in \Gamma/\mathrm{Emb}(\cO,\mathcal{R})$ is defined as 
\[
	\mathfrak{s}_{[\Psi]} := \Phi^{\mathrm{AJ}}(\mathrm{D}_{[\Psi]}) \in \mathbf{D}_{f,L}/\mathrm{Fil}^{\frac{k+2}{2}}(\mathbf{D}_{f,L}).
\]
\end{definition}
As in \cite[Prop.\ 5.8]{Dar01} and \cite[Prop.\ 2]{Tri06}, there is a natural action of $\textup{Pic}(\mathcal{O})$ on the set $\Gamma/\textup{Emb}(\mathcal{O},\mathcal{R})$. Let $H_{\mathcal{C}}$ be the ring class field of conductor $\mathcal{C}$. By the reciprocity isomorphism
\begin{equation} \label{eqn:cftisomorphism}
\mathrm{rec} : \textup{Pic}(\mathcal{O}) \cong \textup{Gal}(H_{\mathcal{C}}/K)
\end{equation}
of class field theory, we get a transported action of $\textup{Gal}(H_{\mathcal{C}}/K)$ on $\Gamma/\textup{Emb}(\mathcal{O},\mathcal{R})$. 
\begin{definition} \label{defn:stark-heegnercycle2}
Let $\chi : \textup{Gal}(H_{\mathcal{C}}/K) \rightarrow \mathbb{C}^{\times}$ be any character. The \emph{$\chi$-twisted Stark--Heegner cycle} is then defined as 
\[ 
\mathrm{D}_{\chi} := \sum\limits_{\tau \in \textup{Gal}(H_{\mathcal{C}}/K)} \chi^{-1}(\tau)\mathrm{D}_{[\textup{rec}^{-1}(\tau)\cdotspace \Psi]} \in (\Delta_{0} \otimes \textup{Div}(\mathcal{H}_{\mathfrak{p}}^{\textup{ur}}) \otimes V_{k,k})_{\Gamma} \otimes \chi,
 \]
where $(-) \otimes \chi$ denotes suitable scalar extension by $\chi$. We denote $\mathfrak{s}_{\chi} := \Phi^{\mathrm{AJ}}(\mathrm{D}_{\chi})$. 
\end{definition}

\subsection{Conjectures on the global rationality}
Let $V_{p}(f)$ be the continuous two--dimensional representation of $G_{F} :=\mathrm{Gal}(\overline{\QQ}/F)$ as before. Recall that we are assuming the conjectural semi-stability of the local Galois representation $V_{p}(f)|_{G_{F_{\pri}}}$ (in the sense of $p$-adic Hodge theory). Under this assumption, we have an isomorphism under the Bloch--Kato exponential \cite{BK07} as in \cite[Lemma 2.1]{IS03}
\begin{equation} \label{eqn:blochkatoexp}
\mathrm{exp}_{\mathrm{BK}} : \frac{\mathbb{D}_{f,L}}{\textup{Fil}^{\frac{k+2}{2}}(\mathbb{D}_{f,L})} \cong \textup{H}^{1}_{\textup{st}}\big(L, V_{p}(f)(\tfrac{k+2}{2})\big). 
\end{equation}
where we now regard $V_{p}(f)$ as a semi-stable representation of $G_{L} \defeq \mathrm{Gal}(\overline{\QQ}_{p}/L)$ by restricting to the action of $G_{L} \subset G_{F_{\pri}}$ (afforded by the inclusion $F_{\pri} \xhookrightarrow{\sigma} L$). If we also assume Conjecture~\ref{conj:trivialzeroconjecture}, Theorem~\ref{thm:identifcationofmonodromymodules} and (\ref{eqn:identifcationofmonodromymodules}) provides us with an isomorphism
\[ 
\frac{\mathbf{D}_{f,L}}{\textup{Fil}^{\frac{k+2}{2}}(\mathbf{D}_{f,L})} \cong \frac{\mathbb{D}_{f,L}}{\textup{Fil}^{\frac{k+2}{2}}(\mathbb{D}_{f,L})}, 
\]
and hence by means of the Bloch--Kato exponential (\ref{eqn:blochkatoexp}), we may regard the Stark--Heegner cohomology classes as elements
\begin{equation} \label{eqn:localstarkheegener}
\mathfrak{s}_{\Psi} \in \textup{H}^{1}_{\textup{st}}(L, V_{p}(f)(\tfrac{k+2}{2}))\mbox{      and       }\mathfrak{s}_{\chi} \in \textup{H}^{1}_{\textup{st}}(L(\chi), V_{p}(f)(\tfrac{k+2}{2})), 
\end{equation}
where $L(\chi)$ is the field generated over $L$ by the values of $\chi$. Recall our running assumption that $L \supseteq K_{\pri}$. Since the prime $\pri$ splits completely in $H_{\mathcal{C}}$, we have an embedding $H_{\mathcal{C}} \xhookrightarrow{\iota_{p}} K_{\pri} \subseteq L$ that induces the restriction map 
\begin{equation} \label{eqn:restriction2}
\textup{res}_{\mathfrak{p}} : \textup{Sel}_{\sst}(H_{\mathcal{C}}, V_{p}(f)(\frac{k+2}{2})) \rightarrow \tupH^{1}_{\sst}(L, V_{p}(f)(\frac{k+2}{2})) \cong \frac{\mathbb{D}_{f,L}}{\textup{Fil}^{\frac{k+2}{2}}(\mathbb{D}_{f,L})} \cong \frac{\mathbf{D}_{f,L}}{\textup{Fil}^{\frac{k+2}{2}}(\mathbf{D}_{f,L})}.
\end{equation}

The following is our main conjecture about the global rationality of these Stark--Heegner cycles.
\begin{conjecture} \label{conj:mainconjecture}
\begin{itemize} \item[(i)] (Global rationality) For any optimal embedding $\Psi \in \mathrm{Emb}(\cO,\mathcal{R})$ of conductor $\mathcal{C}$ relatively prime to $\mathcal{ND}_{K}$, there exists a global Selmer class $\mathcal{S}_{\Psi} \in \textup{Sel}_{\sst}(H_{\mathcal{C}}, V_{p}(\frac{k+2}{2}))$ such that
\[ 
\mathfrak{s}_{\Psi} = \textup{res}_{\mathfrak{p}}(\mathcal{S}_{\Psi}) \in \tupH^{1}_{\sst}\bigg(L, V_{p}(\tfrac{k+2}{2})\bigg).
 \]
\item[(ii)](Shimura reciprocity) For any $\Psi \in \mathrm{Emb}(\cO,\mathcal{R})$ and $\mathfrak{a} \in \mathrm{Pic}(\cO)$,
\[ \mathrm{res}_{\pri}(\mathcal{S}_{\Psi}^{\tau}) = \mathfrak{s}_{\mathfrak{a}\cdotspace \Psi} \]
for $\tau = \mathrm{rec}(\mathfrak{a})$.
\item[(iii)] (Twisted rationality) Let $\chi : \mathrm{Gal}(H_{\mathcal{C}}/K) \rightarrow \mathbb{C}^{\times}$ be a character. Then there exists $\mathcal{S}_{\chi} \in \textup{Sel}_{\sst}(H_{\chi}, V_{p}(\frac{k+2}{2}))^{\chi}$ such that
\[ \mathrm{res}_{\pri}(\mathcal{S}_{\chi}) = \mathfrak{s}_{\chi} \]
where $H_{\chi}$ is the abelian sub-extension of $H_{\mathcal{C}}$ cut out by $\chi$ and $(-)^{\chi}$ denotes the $\chi$-isotypical subspace.
\end{itemize}
\end{conjecture}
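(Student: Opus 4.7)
The conjecture is genuinely open even in its classical ancestors \cite{Dar01,Tri06,RS12}, so any realistic proposal is aspirational. I would attack the three parts in decreasing order of tractability, first exploiting reductions and then confronting the essential local-to-global obstruction.

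The first step is to establish the conjecture in the base-change case, where $f = \mathrm{BC}(\widetilde{f})$ for a classical eigenform $\widetilde{f}$. Here Lemma~\ref{lem:basechangetrivialzero} already matches analytic and algebraic $\mathcal{L}$-invariants, so the identification $\mathbf{D}_{f,L}/\mathrm{Fil}^{(k+2)/2} \cong \tupH^1_{\mathrm{st}}(L, V_p(f)(\tfrac{k+2}{2}))$ is unconditional. I would show that, whenever $K$ contains a real quadratic subfield $K_0/\Q$ compatibly split/inert with the Stark--Heegner hypotheses, an optimal embedding $\Psi : \mathcal{O} \hookrightarrow \mathcal{R}$ restricts to an optimal embedding $\Psi_0 : \mathcal{O}_0 \hookrightarrow \mathcal{R}_0$ for $\widetilde{f}$, and that $\Phi^{\mathrm{AJ}}(\mathrm{D}_\Psi)$ is the image of the Rotger--Seveso class $\Phi^{\mathrm{AJ},\mathrm{RS}}(\mathrm{D}_{\Psi_0})$ under the restriction induced by $V_p(\widetilde{f}) \mapsto V_p(\widetilde{f})|_{G_F}$. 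Comparing the two $\pri$-adic double integrals term-by-term via the factorisation of the Bianchi modular symbol $\phi_f$ should reduce Bianchi rationality to classical Rotger--Seveso rationality, a setting in which partial results of Darmon--Rotger and Longo--Vigni apply.

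For a non-base-change $f$, the approach I would pursue is a $p$-adic Gross--Zagier philosophy. Following the strategy suggested by Conjecture~\ref{conj:grosszagier}, I would construct a two-variable anticyclotomic $p$-adic $L$-function $\mathcal{L}_p(f/K)$ interpolating the Rankin--Selberg values $\Lambda(f, \chi, \tfrac{k+2}{2})$ as $\chi$ ranges over characters of the tower $\{G_\mathcal{C}\}$. Since the sign of the functional equation is $-1$ throughout this family, $\mathcal{L}_p(f/K)$ vanishes identically on the critical line, and the first meaningful invariant is a derivative. The goal is then an explicit reciprocity law of the shape
\[
 \tfrac{d}{ds}\mathcal{L}_p(f/K,\chi,s)\big|_{s=(k+2)/2} \;=\; C(f,K,\chi)\cdot \log_{\mathrm{BK}}(\mathcal{S}_\chi),
\]
where $\log_{\mathrm{BK}}$ is the Bloch--Kato logarithm and $\mathcal{S}_\chi \in \mathrm{Sel}_{\mathrm{st}}(H_\mathcal{C}, V_p(f)(\tfrac{k+2}{2}))^\chi$ is, by construction, a global class whose restriction matches $\mathfrak{s}_\chi$. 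Given part~(i) for characters, part~(iii) follows, and part~(ii) reduces to checking that the $\mathrm{Pic}(\mathcal{O})$-action on $\Gamma/\mathrm{Emb}(\mathcal{O},\mathcal{R})$ -- which is explicitly adelic -- transports under the reciprocity isomorphism \eqref{eqn:cftisomorphism} to the Galois action on the constructed $\mathcal{S}_\Psi$.

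The main obstacle is the one that has blocked every preceding Stark--Heegner conjecture: the tangent-space map $\Phi^{\mathrm{AJ}}$ produces its output \emph{a priori} in a purely local cohomology group, with no geometric avatar (Stark--Heegner cycles are not Heegner in origin; they live on a $p$-arithmetic group attached to an indefinite rational quaternion algebra that does \emph{not} come from a Shimura variety). Consequently no classical Euler system argument applies directly. Any proof in the general case likely needs either (a) a new geometric construction of algebraic cycles on products of Bianchi three-folds whose local invariants realise $\Phi^{\mathrm{AJ}}(\mathrm{D}_\Psi)$, analogous to the diagonal cycle classes of Darmon--Rotger; or (b) the construction of an anticyclotomic Euler system for $V_p(f)$ sufficient to run a Kolyvagin-type argument annihilating the Selmer group of the complementary representation. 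Both routes look substantially harder over an imaginary quadratic base than in the classical setting, and I would expect the split of this problem into tractable pieces to be the principal source of technical difficulty, far exceeding any computation in the integration theory itself.
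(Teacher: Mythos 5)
The statement you were asked about is Conjecture~\ref{conj:mainconjecture}: it is the paper's \emph{main conjecture}, and the paper offers no proof of it --- only a construction making the statement precise, together with evidence (the degeneration to Trifkovi\'c's conjecture in weight $2$ \cite{Tri06,GMS15}, and the base-change discussion in the concluding remarks, where the comparison with the Heegner cycles of \cite{SevHeeg} is explicitly left to future work). So there is no ``paper proof'' to measure your text against, and your submission is, as you say yourself, a research programme rather than a proof. Judged as such, your first step is well aligned with what the authors themselves propose: in the base-change case Lemma~\ref{lem:basechangetrivialzero} makes Theorem~\ref{thm:identifcationofmonodromymodules} unconditional, and reducing the Bianchi Stark--Heegner classes to classical (Rotger--Seveso/Seveso) Heegner-type classes is precisely the strategy sketched in the paper's concluding remark (2). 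Note, however, two inaccuracies there: the comparison requires $K=K_0F$ with $K_0$ real quadratic and the right splitting behaviour (a genuinely restrictive condition, and the paper only expects it to work for genus characters $\chi$), and the relevant classical objects are attached to quaternion algebras over $\Q$ and Seveso's setting \cite{SevHeeg,Sev1}, not literally to a ``restriction'' of a Rotger--Seveso class along $V_p(\widetilde{f})\mapsto V_p(\widetilde{f})|_{G_F}$; making the two Abel--Jacobi images comparable is itself the open technical content of that programme.

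The genuine gap is in your second stage. The proposed ``explicit reciprocity law'' asserts the existence of a global class $\mathcal{S}_\chi\in \Sel_{\sst}(H_{\mathcal{C}},V_p(f)(\tfrac{k+2}{2}))^\chi$ ``by construction'' whose Bloch--Kato logarithm computes the derivative of an anticyclotomic $p$-adic $L$-function and whose restriction is $\mathfrak{s}_\chi$. But producing such a global class is exactly parts (i) and (iii) of Conjecture~\ref{conj:mainconjecture}; a derivative formula of Gross--Zagier type (essentially Conjecture~\ref{conj:grosszagier}) would relate $\mathfrak{s}_\chi$ to $L'(f/K,\chi,\tfrac{k+2}{2})$, but it does not by itself manufacture an element of the global Selmer group --- the class $\mathfrak{s}_\chi$ is defined purely locally via $\Phi^{\mathrm{AJ}}$ and the identification \eqref{eqn:blochkatoexp}, and nothing in the integration theory forces it to lie in the image of $\res_{\pri}$ from $\Sel_{\sst}(H_{\mathcal{C}},\cdot)$. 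Likewise your claim that part (ii) ``reduces to checking'' the compatibility of the $\mathrm{Pic}(\mathcal{O})$-action with \eqref{eqn:cftisomorphism} presupposes that the classes $\mathcal{S}_\Psi$ exist and vary functorially, which again is the conjecture. So the proposal, while a reasonable map of the landscape (and correctly identifying the absence of a geometric or Euler-system avatar as the fundamental obstruction), does not contain a step that closes this local-to-global gap, and it could not be accepted as a proof of the statement --- which, to be fair to you, is precisely why the authors state it as a conjecture.
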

Finally, we also formulate a Gross--Zagier type criterion for non-vanishing that relates the base-change $L$-function to our Stark--Heegner cycles.
\begin{conjecture} \label{conj:grosszagier}
Let $\chi : \mathrm{Gal}(H_{\mathcal{C}}/K) \rightarrow \mathbb{C}^{\times}$ be a character. Then
\[ \mathfrak{s}_{\chi} \neq 0 \hspace{12pt}\text{implies}\hspace{12pt} L'(f/K,\chi,\tfrac{k+2}{2}) := \frac{d}{ds}L(f/K, \chi, s)\big|_{s = \frac{k+2}{2}} \neq 0. \]
\end{conjecture}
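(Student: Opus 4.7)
The plan is to adapt the $p$-adic Gross--Zagier strategy of Bertolini--Darmon (in weight 2) and Rotger--Seveso (in higher weight) to the Bianchi setting. The key intermediary will be an anticyclotomic $p$-adic $L$-function that bridges the Stark--Heegner cohomology class $\mathfrak{s}_\chi$ and the complex derivative $L'(f/K, \chi, \tfrac{k+2}{2})$.

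\emph{Step 1: Construction of a $p$-adic $L$-function.} Using the canonical $\Gamma$-invariant distribution $\mu_f \in \mathbf{MS}_\Gamma(L)_{(f)}$ produced by Theorem~\ref{thm:overconvergentprojectivemodularsymbol}, I would construct an anticyclotomic $p$-adic $L$-function $L_\pri(f/K, -)$ attached to $f$ and $K$. This would be built by integrating $\mu_f$ against Stark--Heegner cycle data $(\gamma_\Psi\cdotspace x - x) \otimes \tau_\Psi \otimes P_\Psi^{k/2}$ summed with twists by characters of the ring class tower, using the double integrals of Definition~\ref{defn:doubleintegrals}. This mirrors the construction of \cite{BW19}, but now twisted along the ring class tower of $K$ rather than purely in the anticyclotomic direction over $F$; existing results of Seveso \cite{Sev1} in the classical setting provide a good template.

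\emph{Step 2: $p$-adic Gross--Zagier formula.} The Stark--Heegner hypothesis forces $\epsilon(\pi, \chi) = -1$, and I would first show that this translates into an exceptional vanishing $L_\pri(f/K, \chi, \tfrac{k+2}{2}) = 0$. Then I would establish a formula
\[
\tfrac{d}{ds} L_\pri(f/K, \chi, s)\big|_{s=\frac{k+2}{2}} \;=\; C_\chi \cdot \log_{p,f}\!\bigl(\mathfrak{s}_\chi\bigr),
\]
for an explicit non-zero constant $C_\chi$, where $\log_{p,f}$ is the $p$-adic logarithm on $\mathbf{D}_{f,L}/\mathrm{Fil}^{\frac{k+2}{2}}(\mathbf{D}_{f,L})$. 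The key inputs are the splitting $\Phi_f = -\Phi_f^{\log_p,\sigma} \oplus \Phi_f^{\mathrm{ord}_\pri}$ and the comparison with Lemma~\ref{lem:rohrlich}: the $\mathrm{ord}_\pri$-component matches the leading term of $L_\pri$ via an analogue of that lemma applied to ring class twists, while the $\log_p$-component contributes the derivative via the $\mathcal{L}$-invariant relation of Corollary~\ref{cor:L-invariant}. This would yield the implication: if $\mathfrak{s}_\chi \neq 0$, then $L_\pri'(f/K, \chi, \tfrac{k+2}{2}) \neq 0$.

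\emph{Step 3: Passage to the complex derivative.} Finally, I would invoke the interpolation property from Step 1 to transfer the non-vanishing from the $p$-adic to the complex $L$-function: the $p$-adic $L$-function $L_\pri(f/K, -)$ evaluated at auxiliary characters $\chi \cdot \psi$ should interpolate the complex central values $L(f/K, \chi\psi, \tfrac{k+2}{2})$ up to explicit non-zero periods and Euler factors. A Rohrlich-type density argument in the anticyclotomic direction, in the spirit of the proof of Theorem~\ref{thm:H1H0}, would then rule out the possibility that $L'_\pri$ is non-zero while $L'$ vanishes, completing the implication $\mathfrak{s}_\chi \neq 0 \Rightarrow L'(f/K, \chi, \tfrac{k+2}{2}) \neq 0$.

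I expect the principal obstacle to be Step 2: proving a clean $p$-adic Gross--Zagier formula requires substantial work even in the classical setting, and the Bianchi case introduces new difficulties arising from the product decomposition $F_\pri \otimes L \cong \prod_\sigma L$ and the two-component structure of $\mathbf{D}_f$ when $p$ is inert, as outlined in \S\ref{subsec:monodromyfrobenius}. One must also carefully match the choice of Abel--Jacobi lift (as in Theorem~\ref{thm:padicabeljacobiimageofdarmoncycles}) with the normalisation of $L_\pri$; the fact that $\mathfrak{s}_\chi$ is independent of the choice of lift is encouraging, but the constant $C_\chi$ will depend on this data, and matching against the complex $L$-function demands a precise interpolation law that goes beyond the constructions of \cite{BW19}.
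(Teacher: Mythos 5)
The statement you are trying to prove is labelled a \emph{conjecture} in the paper, and the paper does not supply a proof; indeed, the Concluding Remarks explicitly state that ``a proof of either [Conjecture~\ref{conj:mainconjecture} or Conjecture~\ref{conj:grosszagier}] seems to be a long way off; even their counterparts for classical modular forms (cf.\ \cite{RS12} and \cite{Dar01}) are widely open except in a few cases.'' So there is no paper argument to compare against, and your text should be read as a proposed research strategy rather than a proof. As such it cannot be judged ``correct''; what can be assessed is whether the strategy is coherent and where the genuine obstructions lie.

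The main gap is in Steps~2 and~3, and it is deeper than the obstacle you flag. For Step~2, a $p$-adic Gross--Zagier formula of the shape $L_\pri'(f/K,\chi,\tfrac{k+2}{2}) = C_\chi\cdot\log_{p,f}(\mathfrak{s}_\chi)$ is precisely the content of the Bertolini--Darmon exceptional-zero theorem (weight $2$) and its higher-weight extensions, and those proofs rely essentially on Waldspurger-type formulas on definite quaternion algebras and the Jacquet--Langlands correspondence. In the Bianchi setting there are no Shimura varieties attached to $\mathrm{GL}_2/F$, so none of this machinery transfers; the paper's double integrals are the analytic substitute, but they do not by themselves yield a Waldspurger formula. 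For Step~3, the logic is not sound even granting Step~2: an interpolation property for $L_\pri(f/K,-)$ relates $p$-adic \emph{values} at finite-order twists to complex central $L$-\emph{values}, not derivatives to derivatives. A Rohrlich density argument (as in the proof of Theorem~\ref{thm:H1H0}) controls non-vanishing of central values in a family of twists; it says nothing about $L'(f/K,\chi,\tfrac{k+2}{2})$ at the fixed character $\chi$ where the sign forces vanishing. Passing from non-vanishing of $L_\pri'$ to non-vanishing of the complex derivative would require a complex Gross--Zagier formula for Bianchi forms, itself completely open. There is also a gap in Step~1 that you do not mention: the distribution $\mu_f$ of Theorem~\ref{thm:overconvergentprojectivemodularsymbol} and the integrals of \cite{BW19} interpolate $L$-values of $f$ twisted by Hecke characters of $F$, and Lemma~\ref{lem:rohrlich} uses \emph{split} embeddings $F\times F\hookrightarrow M_2(F)$; reinterpreting these as a measure on the ring class tower of $K$, i.e.\ attached to the optimal embeddings $K\hookrightarrow M_2(F)$, is a separate construction that must be carried out and whose interpolation formula must be established. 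So the proposal is a reasonable sketch of how one \emph{might} hope to eventually approach Conjecture~\ref{conj:grosszagier}, consistent with the paper's framework, but each of its three steps asserts a theorem that is currently unavailable.
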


\begin{remark} \label{rem:choiceofisomorphism}
The image of the global Selmer group $\textup{Sel}_{\sst}(H_{\mathcal{C}}, V_{p}(f)(\frac{k+2}{2}))$ under (\ref{eqn:restriction2}) is a Hecke submodule of $\mathbf{D}_{f,L}/\textup{Fil}^{\frac{k+2}{2}}(\mathbf{D}_{f,L})$. It can be shown that every automorphism of $\mathbf{D}_{f,L}$ acts on the quotient space $\mathbf{D}_{f,L}/\textup{Fil}^{\frac{k+2}{2}}(\mathbf{D}_{f,L})$ via multiplication by an element in an appropriate Hecke algebra $\mathbb{T}_{\Gamma}$ (c.f. \cite[Lemma 4.4]{RS12}). Whilst the isomorphism of Theorem~\ref{thm:identifcationofmonodromymodules} depended on the choice of the branch of the $p$-adic logarithm, the image of the global Selmer group in $\mathbf{D}_{f,L}/\textup{Fil}^{\frac{k+2}{2}}(\mathbf{D}_{f,L})$ is independent of this choice.
\end{remark} 
\subsection{Concluding Remarks} 
\begin{itemize}
\item[(1)] It is possible to formulate Conjecture~\ref{conj:grosszagier} independent of the truth of Conjecture~\ref{conj:mainconjecture}, but a proof of either seems to be a long way off; even their counterparts for classical modular forms (cf. \cite{RS12} and \cite{Dar01}) are widely open except in a few cases. 
\item[(2)] A particular case where Conjecture~\ref{conj:mainconjecture} holds would be when the Bianchi eigenform $f$ is the base-change of a classical eigenform $\widetilde{f}$ of weight $k+2$ and level $N$ (such that $N\cO_{F} = \mathcal{N}$) which is new at the prime $p$, and the character $\chi : \mathrm{Gal}(H_{K}/K) \rightarrow \mathbb{C}^{\times}$ is an unramified quadratic (genus) character. Note that in this case, Conjecture~\ref{conj:trivialzeroconjecture} is known to hold by Lemma~\ref{lem:basechangetrivialzero}. In particular, one can show that the $\pri$-adic Abel--Jacobi image of the Stark--Heegner cycles considered here coincides with that of classical Heegner cycles considered in \cite{SevHeeg}, similar to the strategy pursued, for example, in \cite{10.2307/40345466} and \cite{Sev1}. This is the subject of an ongoing project of the first author.
\end{itemize}
\appendix
\section{Cohomology of $\Gamma$} \label{appendix}
The goal of this appendix is to show that $\tupH_{1}(\Gamma,V_{k,k})$ vanishes, as used in Theorem~\ref{thm:padicabeljacobiimageofdarmoncycles}. 
\subsection{Cuspidal and Eisenstein Cohomology}
Let $\cH_{3}$ be the hyperbolic $3$-space, fix a level group $U_{0} \subseteq \GL_{2}(\widehat{\cO_{F}})$ and set
\[ 
\Gamma_{0} \defeq \mathrm{SL}_{2}(F) \cap [\GL_{2}(\mathbb{C})U_{0}] \ \subseteq \mathrm{SL}_{2}(\cO_{F}). 
\]
The Borel--Serre compactification of $\Gamma_{0}\backslash\cH_{3}$ is $\Gamma_{0}\backslash\overline{\cH}_{3}$, where
\[ \overline{\cH}_{3} := \cH_{3} \sqcup \bigsqcup_{x \in \mathbb{P}^{1}(F)} \R^2.\]

We let $\mathcal{V}_{k,k}^\vee(\mathbb{C}_{p})$ denote the local system on $\Gamma_{0}\backslash\cH_{3}$ associated to $V_{k,k}^\vee(\mathbb{C}_{p}) = \mathrm{Hom}(V_{k,k}(\C_p),\C_p).$ By an abuse of notation, we also use $\mathcal{V}_{k,k}^\vee(\mathbb{C}_{p})$ to denote the associated local system on the compactification $\Gamma_{0}\backslash\overline{\cH}_{3}$ and on the boundary $\partial(\Gamma_{0}\backslash\overline{\cH}_{3})$. The cohomology of these spaces is related by the excision long exact sequence
	\begin{equation}\label{eqn:boundaryrestriction}
		\cdots \to \h^i_{\mathrm{c}}(\Gamma_0\backslash \cH_3, \cV_{k,k}^\vee(\Cp)) \to \h^{i}(\Gamma_0\backslash\cH_3,\cV_{k,k}^\vee(\Cp))  \xrightarrow{\ \mathrm{res}_\partial \ } \h^{i}(\partial(\Gamma_0\backslash\overline{\cH}_3),\cV_{k,k}^\vee(\Cp)) \to \cdots.
	\end{equation}

We recall from \cite[\S IV]{Har87} that $\mathrm{res}_{\partial}$ admits a section $i$ on its image; we define the \emph{Eisenstein subspace}
	\[
		\h^i_{\mathrm{Eis}}(\Gamma_0\backslash\cH_3, \cV_{k,k}^\vee(\Cp)) := \mathrm{Im}(i\circ\mathrm{res}_\partial) \subset \h^i(\Gamma_0\backslash\cH_3, \cV_{k,k}^\vee(\Cp)).
	\]
We then have a Hecke-stable decomposition into cuspidal and Eisenstein cohomology
\begin{equation}\label{eq:cusp + eis}
\tupH^{i}(\Gamma_{0}\backslash\cH_{3}, \mathcal{V}_{k,k}^\vee(\mathbb{C}_{p})) = \tupH^{i}_{\mathrm{cusp}}(\Gamma_{0}\backslash\cH_{3}, \mathcal{V}_{k,k}^\vee(\mathbb{C}_{p})) \oplus \tupH^{i}_{\mathrm{Eis}}(\Gamma_{0}\backslash\cH_{3}, \mathcal{V}_{k,k}^\vee(\mathbb{C}_{p})).
\end{equation}
We can describe $\h^i_{\mathrm{cusp}}$ using the Eichler--Shimura--Harder isomorphism (again, see \cite{Har87}):
\begin{theorem}[Harder] \label{thm:eichlershimuraharder}
\begin{itemize}
\item[(i)] $\tupH^{i}_{\mathrm{cusp}}(\Gamma_{0}\backslash\cH_{3}, \mathcal{V}_{k,k}^\vee(\mathbb{C}_{p})) = 0$ unless $i = 1,2$.
\item[(ii)] $\tupH^{1}_{\mathrm{cusp}}(\Gamma_{0}\backslash\cH_{3}, \mathcal{V}_{k,k}^\vee(\mathbb{C}_{p})) \cong \tupH^{2}_{\mathrm{cusp}}(\Gamma_{0}\backslash\cH_{3}, \mathcal{V}_{k,k}^\vee(\mathbb{C}_{p})) \cong S_{k,k}(U_{0}, \mathbb{C}_{p})$.
\item[(iii)] $\tupH^{0}_{\mathrm{Eis}}(\Gamma_{0}\backslash\cH_{3}, \mathcal{V}_{k,k}^\vee(\mathbb{C}_{p})) = 0$ unless $k = 0$, in which case it is $\mathbb{C}_{p}$.
\end{itemize}
\end{theorem}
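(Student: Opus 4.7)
This theorem is the Eichler--Shimura--Harder isomorphism, originally due to Harder; the plan I would pursue follows the strategy of \cite{Har87}, combining Matsushima's formula for cuspidal cohomology with an explicit analysis of Eisenstein cohomology via the Borel--Serre boundary.

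For part (i), the plan is to invoke Matsushima's formula to identify
\[
\tupH^{i}_{\mathrm{cusp}}(\Gamma_{0}\backslash\cH_{3}, \mathcal{V}_{k,k}^\vee(\C)) \cong \bigoplus_{\pi} m(\pi)\,\tupH^{i}(\mathfrak{g}_{\infty},K_{\infty};\pi_{\infty}\otimes V_{k,k}^\vee(\C)),
\]
where the sum ranges over cuspidal automorphic representations of $\mathrm{GL}_{2}(\mathbb{A}_{F})$ of the appropriate level, and then compute the right-hand side. Since $F$ is imaginary quadratic, the archimedean group is $\GL_{2}(\C)$, a real rank-one Lie group whose symmetric space $\cH_{3}$ has dimension three. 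A Vogan--Zuckerman style calculation (or a direct computation using the principal series at $\infty$) shows that the unitary cohomological representations $\pi_{\infty}$ with coefficients in $V_{k,k}^\vee$ contribute precisely a one-dimensional space in each of degrees $1$ and $2$ and nothing outside this range. This simultaneously yields (i) and the symmetry $\tupH^{1}_{\mathrm{cusp}} \cong \tupH^{2}_{\mathrm{cusp}}$ appearing in (ii), which can also be deduced from Poincar\'{e} duality on the 3-manifold $\Gamma_{0}\backslash\cH_{3}$ after identifying $\cV_{k,k}^\vee$ with its dual up to a harmless twist.

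For the identification of $\tupH^{1}_{\mathrm{cusp}}$ with $S_{k,k}(U_{0},\C)$ in (ii), I would construct an explicit Eichler--Shimura map: given a Bianchi cusp form $f$ of weight $(k,k)$, assemble a vector-valued harmonic differential form on $\Gamma_{0}\backslash\cH_{3}$ with coefficients in $\cV_{k,k}^\vee$, using the transformation law defining Bianchi cusp forms and the explicit model of $V_{k,k}^\vee$ as polynomials, and check that this intertwines Hecke operators. Multiplicity one for cuspidal automorphic representations on $\mathrm{GL}_{2}(\mathbb{A}_{F})$ (cf.\ \cite[Prop.~3.1]{Hid94}) then forces this map to be an isomorphism of Hecke modules after comparing dimensions via the Matsushima decomposition.

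For (iii), the plan is to exploit the explicit description of the boundary: $\partial(\Gamma_{0}\backslash\overline{\cH}_{3})$ is a disjoint union of 2-tori indexed by cusps, and its cohomology is computed from $(\Gamma_{0}\cap P)$-invariants of $V_{k,k}^\vee$ for the parabolic stabilizers $P$ of cusps. In degree zero, these invariants pick out the elements of $V_{k,k}^\vee$ fixed by a unipotent subgroup of $\PGLt$, and a direct check on the polynomial representation shows this subspace vanishes unless $k=0$, in which case it is a line. The Eisenstein subspace $\tupH^{0}_{\mathrm{Eis}}$ is then read off from the section $i$ of $\mathrm{res}_{\partial}$ in \eqref{eqn:boundaryrestriction}. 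The main obstacle in the whole program is part (iii): establishing that $i$ actually splits the restriction map in the required Hecke-equivariant way is the most delicate part of Harder's argument, requiring the theory of Eisenstein series and their constant terms, whereas (i) and (ii) reduce in a relatively formal way to standard $(\mathfrak{g},K)$-cohomology computations.
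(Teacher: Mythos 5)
The paper does not prove this theorem; it is stated verbatim and attributed to Harder, with a pointer to \cite{Har87}. So there is no ``paper's own proof'' to compare against, and your sketch is an attempt to reconstruct Harder's argument. For parts (i) and (ii), your outline -- Matsushima's formula plus the $(\mathfrak{g},K)$-cohomology computation for $\GL_2(\C)$ to locate cuspidal cohomology in degrees $1$ and $2$ with multiplicity one, then an explicit Eichler--Shimura map identified as an isomorphism via multiplicity one and a dimension count, with Poincar\'e duality relating degrees $1$ and $2$ -- is exactly the standard route and matches Harder's treatment.

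Part (iii) contains a genuine error. You argue that the \emph{boundary} $\tupH^0$ is computed from unipotent invariants of $V_{k,k}^\vee$ and that ``a direct check on the polynomial representation shows this subspace vanishes unless $k=0$.'' That is false: with the weight-$k$ action $\smallmatrd{1}{b}{0}{1}\cdot P(x)=P(x+b)$, the unipotent-fixed subspace of $V_k$ is the line of constants, and dually the unipotent-fixed subspace of $V_k^\vee$ is the line spanned by the top-degree coordinate functional; both are one-dimensional for every $k\geq 0$, not zero. Consequently the boundary degree-$0$ cohomology does \emph{not} vanish for $k>0$, and your route through it cannot establish (iii). The correct (and shorter) argument is global: by definition $\tupH^{0}_{\mathrm{Eis}}\subseteq\tupH^{0}(\Gamma_{0}\backslash\cH_{3},\cV_{k,k}^\vee)\cong (V_{k,k}^\vee)^{\Gamma_0}$, and for $k>0$ this vanishes because $\Gamma_0\subset\SL_2(\cO_F)$ is Zariski dense (Borel density) and $V_{k,k}^\vee$ is a nontrivial irreducible algebraic representation; hence $\tupH^0_{\mathrm{Eis}}=0$ for $k>0$ regardless of the boundary. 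For $k=0$ one has $\tupH^{0}=\C_p$, and since $\tupH^0_{\mathrm{cusp}}=0$ by part (i), the decomposition \eqref{eq:cusp + eis} gives $\tupH^0_{\mathrm{Eis}}=\C_p$. Replace the unipotent-invariants computation with this Borel-density argument and your sketch is sound.
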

Since $\Gamma_0$ is the fundamental group of $\Gamma_0\backslash\cH_3$, we have the standard identification of singular and group cohomology
\begin{equation}\label{eq:singular group 1}
	\h^i(\Gamma_0\backslash \cH_3, \cV_{k,k}^\vee(\C_p)) \cong \h^i(\Gamma_0, V_{k,k}^\vee(\C_p)),
\end{equation}
and via \eqref{eq:cusp + eis}, $\h^i(\Gamma_0, V_{k,k}^\vee(\C_p))$ decomposes into cuspidal and Eisenstein parts. We can similarly describe the boundary cohomology in group-theoretic language; let $\{s_{1},\ldots,s_{t}\}$ be a set of representatives of the cusps of $\Gamma_{0}$. We set
\[ \tupH^{i}_{\partial}(\Gamma_{0}, V_{k,k}^{\vee}) := \bigoplus\limits_{i = 1}^{t} \tupH^{i}(\Gamma_{0,s_{i}}, V_{k,k}^{\vee}), \]
where $\Gamma_{0,s_{i}}$ is the stabiliser of the cusp $s_{i}$ in $\Gamma_{0}$. Then we have an isomorphism
	\begin{equation}\label{eq:singular group 2}
		\h^i_\partial(\Gamma_0, V_{k,k}^\vee) \cong \h^i(\partial(\Gamma_0\backslash\overline{\cH}_3,\cV_{k,k}^\vee),
	\end{equation}
	which fits into a commutative diagram
\begin{equation} \label{eqn:identifyingcohomology}
\xymatrix@C=15mm{
\tupH^{i}(\Gamma_{0}, V_{k,k}^{\vee}) \ar[d]^{\cong}_{\eqref{eq:singular group 1}} \ar[r]^{\oplus_{i = 1}^{t}\mathrm{res}} & \tupH^{i}_{\partial}(\Gamma_{0}, V_{k,k}^{\vee}) \ar[d]^{\cong}_{\eqref{eq:singular group 2}}\\
\tupH^{i}(\Gamma_{0}\backslash\cH_{3}, \mathcal{V}_{k,k}^{\vee})  \ar[r]^-{\mathrm{res}_{\partial}} & \tupH^{i}(\partial(\Gamma_{0}\backslash\overline{\cH}_{3}), \mathcal{V}_{k,k}^{\vee}),}
\end{equation}
where the horizontal arrows are the restriction maps.
\subsection{$\pri$-new and $\pri$-old} Let $\alpha \in \widetilde{\Gamma}/\Gamma$ be any element normalising $\Gamma_{0}(\cN)$ and set $\Gamma_{0}'(\cM) := \alpha\Gamma_{0}(\cM)\alpha^{-1}$. We have the obvious restriction homomorphisms
\begin{align*} \mathrm{Res} : \tupH^{1}(\Gamma_{0}(\cM), V_{k,k}^{\vee}) &\longrightarrow \tupH^{1}(\Gamma_{0}(\cN), V_{k,k}^{\vee}) \\
 \mathrm{Res'} : \tupH^{1}(\Gamma_{0}'(\cM), V_{k,k}^{\vee}) &\longrightarrow \tupH^{1}(\Gamma_{0}(\cN), V_{k,k}^{\vee})
\end{align*}
induced by the inclusions $\Gamma_{0}(\cN) \subset \Gamma_{0}(\cM)$ and $\Gamma_{0}(\cN) \subset \Gamma_{0}'(\cM)$. (Note these are different restriction maps from $\mathrm{res}_\partial$ above). In fact, the restriction homomorphisms respect the Eisenstein/cuspidal decomposition, i.e. 
\[(\mathrm{Res}+\mathrm{Res'})(\tupH^{1}_{?}(\Gamma_{0}(\cM), V_{k,k}^{\vee}) \oplus \tupH^{1}_{?}(\Gamma_{0}'(\cM), V_{k,k}^{\vee})) \in \tupH^{1}_{?}(\Gamma_{0}(\cN), V_{k,k}^{\vee}) \]
for $? \in \{\mathrm{cusp},\mathrm{Eis}\}$ (compare \cite[\S4]{Gre09}). On the cuspidal part, by Theorem~\ref{thm:eichlershimuraharder}, we have
\begin{equation} \label{eqn:restrictionofcuspforms1}
\xymatrix@C=18mm{
\tupH^{1}_{\mathrm{cusp}}(\Gamma_{0}(\cM), V_{k,k}^{\vee}) \oplus \tupH^{1}_{\mathrm{cusp}}(\Gamma_{0}'(\cM), V_{k,k}^{\vee}) \ar[d]^{\cong} \ar[r]^-{(\mathrm{Res}+\mathrm{Res'})} & \tupH^{1}_{\mathrm{cusp}}(\Gamma_{0}(\cN), V_{k,k}^{\vee}) \ar[d]^{\cong}\\
 S_{k,k}(U_{0}(\cM)) \oplus S_{k,k}(U_{0}'(\cM)) \ar[r]^-{(\mathrm{Res}+\mathrm{Res'})} & S_{k,k}(U_{0}(\cN))}
\end{equation}
where $U_{0}'(\cM) := \alpha U_{0}(\cN)\alpha^{-1}$ and the bottom maps are the obvious inclusions. Note that conjugation by $\alpha$ induces a canonical isomorphism
\[ \tupH^{1}_{\mathrm{cusp}}(\Gamma_{0}(\cM), V_{k,k}^{\vee}) \cong \tupH^{1}_{\mathrm{cusp}}(\Gamma_{0}'(\cM), V_{k,k}^{\vee})\]
and correspondingly under Theorem~\ref{thm:eichlershimuraharder}, translation by $\alpha^{-1}$ induces an isomorphism
\begin{equation} \label{eqn:identificationofcuspforms}
S_{k,k}(U_{0}(\cM)) \cong S_{k,k}(U_{0}'(\cM)) 
\end{equation}
which we use to identify these two spaces. 
\begin{definition} \label{defn:p-old} The subspace of $S_{k,k}(U_{0}(\cN))$ of $\pri$-old forms is the image of the map 
\begin{align*}
S_{k,k}(U_{0}(\cM)) \oplus S_{k,k}(U_{0}(\cM)) & \xrightarrow{\mathrm{Res}+\mathrm{Res'}} S_{k,k}(U_{0}(\cN))\\
(f_{1}(g),f_{2}(g)) & \mapsto f_{1}(g) + f_{2}(g\alpha^{-1}).
\end{align*}
\end{definition} 
 Let $U_{0}(\cM) = \bigsqcup_{i \in I_{\pri}}U_{0}(\cN)\gamma_{i}$ be a system of coset representatives with $\gamma_{i} \in U_{0}(\cM)$.
\begin{definition}{\cite[Defn.~ 2.16]{BW19}} \label{defn:p-new}
The subspace of $S_{k,k}(U_{0}(\cN))$ of $\pri$-new cusp forms is the kernel of the map
\begin{equation} \label{eqn:p-new}
 S_{k,k}(U_{0}(\cN)) \xrightarrow{\phi_{s}\oplus\phi_{t}} S_{k,k}(U_{0}(\cM)) \oplus S_{k,k}(U_{0}(\cM))
\end{equation}
where $\phi_{s}$ and $\phi_{t}$ are the degeneracy maps 
\[
\phi_{s}(f)(g) := \sum\limits_{i \in I_{\pri}}f(g\gamma_{i}), \hspace{12pt}
\phi_{t}(f)(g) := \sum\limits_{i \in I_{\pri}}f(g\gamma_{i}\alpha).
\]
\end{definition}
\begin{lemma} \label{lem:IharaLemma}
The sum of the restriction maps
\[ \tupH^{1}_{\mathrm{cusp}}(\Gamma_{0}(\cM), V_{k,k}^{\vee}) \oplus \tupH^{1}_{\mathrm{cusp}}(\Gamma_{0}'(\cM), V_{k,k}^{\vee}) \xrightarrow{\mathrm{Res}+\mathrm{Res}'} \tupH^{1}_{\mathrm{cusp}}(\Gamma_{0}(\cN), V_{k,k}^{\vee})\]
is injective.
\end{lemma}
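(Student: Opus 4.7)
The plan is to translate the cohomological statement into a statement about cusp forms via the Eichler--Shimura--Harder isomorphism, and then to analyze the resulting map locally at $\pri$ using multiplicity one and the theory of unramified principal series.

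\textbf{Step 1 (reduction to cusp forms).} By the commutative diagram \eqref{eqn:restrictionofcuspforms1} combined with the identification \eqref{eqn:identificationofcuspforms}, injectivity of $\mathrm{Res}+\mathrm{Res}'$ on cuspidal cohomology is equivalent to injectivity of
\[
S_{k,k}(U_0(\cM)) \oplus S_{k,k}(U_0(\cM)) \longrightarrow S_{k,k}(U_0(\cN)), \qquad (f_1,f_2) \longmapsto \big(g \mapsto f_1(g) + f_2(g\alpha^{-1})\big).
\]

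\textbf{Step 2 (spectral decomposition).} By multiplicity one for cuspidal Bianchi automorphic representations (\cite[Prop.~3.1]{Hid94}, cited in Remark~\ref{rem:multiplicityone}), we have a Hecke-stable decomposition
\[
S_{k,k}(U_0(\cM)) = \bigoplus_{\pi} S_{k,k}(U_0(\cM))[\pi],
\]
where $\pi$ ranges over the (finitely many) cuspidal automorphic representations of $\GL_2(\A_F)$ of the correct archimedean type and with finite conductor dividing $\cM$. Moreover, $S_{k,k}(U_0(\cM))[\pi] \cong \pi_f^{U_0(\cM)}$ as vector spaces. The map above is Hecke-equivariant and so decomposes into a direct sum indexed by $\pi$; it therefore suffices to check injectivity on each $\pi$-isotypical component separately.

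\textbf{Step 3 (local reduction to $\pri$).} Since the levels $U_0(\cM)$ and $U_0(\cN)$ agree at every finite place $v \neq \pri$, and since the element $\alpha$ may be taken to lie in $\GL_2(F_\pri)$ (or to have trivial components away from $\pri$), the $\pi$-component of the map is, up to the identity at other places, the local map
\[
\pi_\pri^{K_\pri} \oplus \pi_\pri^{K_\pri} \longrightarrow \pi_\pri^{U_0(\pri)_\pri}, \qquad (v,w) \longmapsto v + \alpha_\pri^{-1}\cdot w,
\]
where $K_\pri = \GL_2(\cO_\pri)$ and $U_0(\pri)_\pri \subset K_\pri$ is the standard Iwahori.

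\textbf{Step 4 (unramified principal series).} Since $\pri \nmid \cM$ and $\pi$ has conductor dividing $\cM$, the local representation $\pi_\pri$ is unramified; and since $\pi$ is cuspidal, $\pi_\pri$ is generic, hence an irreducible unramified principal series. Standard theory then gives $\dim_{\C} \pi_\pri^{K_\pri} = 1$, spanned by a spherical vector $v_0$, and $\dim_{\C} \pi_\pri^{U_0(\pri)_\pri} = 2$. The vectors $v_0$ and $\alpha_\pri^{-1}\cdotspace v_0$ are linearly independent: a linear dependence would force $v_0$ to be fixed by both $K_\pri$ and the conjugate maximal compact $\alpha_\pri K_\pri \alpha_\pri^{-1}$, which together generate $\GL_2(F_\pri)$ modulo the centre, contradicting the infinite-dimensionality of $\pi_\pri$. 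Thus $\{v_0, \alpha_\pri^{-1} v_0\}$ is a basis of $\pi_\pri^{U_0(\pri)_\pri}$, the local map is an isomorphism, and in particular injective on each $\pi$-component.

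The only delicate step is the local analysis at $\pri$ in Step~4, which amounts to the Atkin--Lehner dichotomy for $\pri$-old vectors in an unramified principal series; this is classical and goes through unchanged in the Bianchi setting since everything takes place in the local representation $\pi_\pri$ of $\GL_2(F_\pri)$.
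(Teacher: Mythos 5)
Your argument is correct in substance, but it takes a genuinely different route from the paper. After the common reduction to cusp forms via \eqref{eqn:restrictionofcuspforms1} and \eqref{eqn:identificationofcuspforms}, the paper stays global and elementary: it composes $\mathrm{Res}+\mathrm{Res}'$ with the degeneracy maps $\phi_s\oplus\phi_t$ of \eqref{eqn:p-new} and observes that the composite endomorphism of $S_{k,k}(U_0(\cM))^{\oplus 2}$ is the matrix \eqref{eqn:matrix} with entries $N(\pri)+1$ and $N(\pri)^{-k/2}T_{\pri}$, which is invertible because the Jacquet--Shalika bound towards Ramanujan--Petersson bounds the eigenvalues of $T_\pri$ and hence keeps the determinant away from zero; invertibility of the composite gives injectivity of the first map. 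You instead decompose the cuspidal spectrum (strong multiplicity one), reduce to the local map $\pi_\pri^{K_\pri}\oplus\pi_\pri^{K_\pri}\to\pi_\pri^{I_\pri}$, and prove linear independence of the spherical vector and its translate by the tree/amalgam argument, using only genericity (infinite-dimensionality) of the local component of a cuspidal representation. What each buys: the paper's proof avoids the adelic/local representation-theoretic machinery and is a one-line Hecke computation, at the cost of invoking a nontrivial analytic input (Jacquet--Shalika); your proof is the classical ``Ihara's lemma'' argument, needs no bound towards Ramanujan at all, and in fact shows the local map is an isomorphism onto the Iwahori-fixed vectors, but it requires the dictionary between the paper's classical element $\alpha$ and a local element $\alpha_\pri$ with $v_\pri(\det\alpha_\pri)$ odd, which is fine under the class number one hypothesis.

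One small inaccuracy in Step 4: $K_\pri$ and $\alpha_\pri K_\pri\alpha_\pri^{-1}$ do \emph{not} generate $\GL_2(F_\pri)$ modulo the centre (all their elements have unit determinant); by Ihara/Serre they generate a group containing $\mathrm{SL}_2(F_\pri)$, which is the index-two subgroup of even determinant valuation modulo centre. The contradiction still goes through with one extra line: a nonzero vector fixed by $\mathrm{SL}_2(F_\pri)$ spans, by normality of $\mathrm{SL}_2$ and irreducibility of $\pi_\pri$, a space on which $\mathrm{SL}_2(F_\pri)$ acts trivially and which is all of $\pi_\pri$, so $\pi_\pri$ factors through the determinant, contradicting genericity. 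With that correction your proof is complete.
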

\begin{proof}
By (\ref{eqn:restrictionofcuspforms1}) and (\ref{eqn:identificationofcuspforms}), it suffices to show that 
\[ S_{k,k}(U_{0}(\cM)) \oplus S_{k,k}(U_{0}(\cM)) \xrightarrow{\mathrm{Res}+\mathrm{Res'}} S_{k,k}(U_{0}(\cN)) \]
is injective. The endomorphism
\begin{align*}
[S_{k,k}(U_{0}(\cM)) \oplus S_{k,k}(U_{0}(\cM))] & \xrightarrow{\mathrm{Res}+\mathrm{Res'}} S_{k,k}(U_{0}(\cN)) \xrightarrow{\phi_{s}\oplus\phi_{t}}  [S_{k,k}(U_{0}(\cM)) \oplus S_{k,k}(U_{0}(\cM))]   
\end{align*}
is given by the matrix
\begin{equation} \label{eqn:matrix} \begin{pmatrix} N(\pri)+1& N(\pri)^{-k/2}T_{\pri}\\N(\pri)^{-k/2}T_{\pri} & N(\pri)+1 \end{pmatrix} 
\end{equation}
(see \cite[Lemma 3.4.8]{CV19} for $k = 0$ and \cite[\S 5]{KK13} for $k > 0$). It is invertible since the determinant is bounded below by $2N(\pri)+1>0$ using the trivial bound known towards the Generalised Ramanujan--Petersson conjecture, obtained by Jacquet and Shalika in \cite{10.2307/2374103, 10.2307/2374050} (see also \cite{LRS99}). This shows that $(\mathrm{Res}+\mathrm{Res}')$ is injective.    
\end{proof}
We now look at the maps $\mathrm{Res}$ and $\mathrm{Res'}$ on the Eisenstein parts. Let $\{s_{1},\ldots,s_{m}\}$ be representatives of the cusps of $\Gamma_{0}(\cM)$. It can be checked that $\{s_{1}',\ldots,s_{m}'\}$, for $s_{i}' = \alpha\cdotspace s_{i}$, form a set of representatives for the cusps of $\Gamma_0'(\cM)$, and that moreover the union $\{s_{1},\ldots,s_{m},s'_{1},\ldots,s'_{m}\}$ is a set of representatives for the cusps of $\Gamma_{0}(\cN)$. By definition, we thus see that the map
\[
\h^1_\partial(\Gamma_0(\cM),V_{k,k}^\vee) \oplus \h^1_\partial(\Gamma_0'(\cM),V_{k,k}^\vee)  \xrightarrow{\mathrm{Res}+\mathrm{Res'}} \h^1_\partial(\Gamma_0(\cN),V_{k,k}^\vee)
\]
is an isomorphism; and then from the definition of Eisenstein cohomology, and \eqref{eqn:identifyingcohomology}, we deduce
\[ 
\tupH^{1}_{\mathrm{Eis}}(\Gamma_{0}(\cM),V_{k,k}^{\vee}) \oplus \tupH^{1}_{\mathrm{Eis}}(\Gamma_{0}'(\cM),V_{k,k}^{\vee}) \xrightarrow{\mathrm{Res}+\mathrm{Res'}} \tupH^{1}_{\mathrm{Eis}}(\Gamma_{0}(\cN),V_{k,k}^{\vee}) 
\]
is injective (compare \cite[Lem.\ 5]{Gre09}). We summarise as follows.
\begin{corollary} \label{cor:injectivitylevelchange} The map
\[ \tupH^{1}(\Gamma_{0}(\cM),V_{k,k}^{\vee}) \oplus \tupH^{1}(\Gamma_{0}'(\cM),V_{k,k}^{\vee}) \xrightarrow{\mathrm{Res}+\mathrm{Res'}} \tupH^{1}(\Gamma_{0}(\cN),V_{k,k}^{\vee}) \]
is injective.
\end{corollary}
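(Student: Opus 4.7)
The plan is to combine the two injectivity statements already established separately for the cuspidal and Eisenstein parts, using the Hecke-stable decomposition \eqref{eq:cusp + eis} of $\h^1$.

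First, I would invoke the fact, noted at the start of the subsection on $\pri$-new and $\pri$-old forms, that the restriction homomorphisms $\mathrm{Res}$ and $\mathrm{Res}'$ each respect the splitting $\h^1 = \h^1_{\mathrm{cusp}} \oplus \h^1_{\mathrm{Eis}}$. Consequently the map $\mathrm{Res} + \mathrm{Res}'$ of the corollary is the direct sum of its restrictions to the cuspidal and Eisenstein components, so it suffices to prove injectivity on each summand separately.

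Second, I would cite the two inputs: on the cuspidal part, Lemma \ref{lem:IharaLemma} gives injectivity of
\[
\tupH^{1}_{\mathrm{cusp}}(\Gamma_{0}(\cM),V_{k,k}^{\vee}) \oplus \tupH^{1}_{\mathrm{cusp}}(\Gamma_{0}'(\cM),V_{k,k}^{\vee}) \xrightarrow{\mathrm{Res}+\mathrm{Res'}} \tupH^{1}_{\mathrm{cusp}}(\Gamma_{0}(\cN),V_{k,k}^{\vee}),
\]
proved via the matrix \eqref{eqn:matrix} of degeneracy maps and the Jacquet--Shalika bound. On the Eisenstein part, the paragraph immediately preceding the corollary observes that $\{s_1,\dots,s_m,s_1',\dots,s_m'\}$ is a complete set of cusp representatives for $\Gamma_0(\cN)$, so the corresponding boundary cohomology map is an isomorphism; combining this with the commutative diagram \eqref{eqn:identifyingcohomology} and the definition of Eisenstein cohomology as the image of a section of $\mathrm{res}_\partial$ yields injectivity of the Eisenstein analogue of $\mathrm{Res}+\mathrm{Res}'$.

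Since a direct sum of injective maps is injective, this gives the corollary. The only delicate point — really the only thing that goes beyond the routine — is the Hecke-compatibility of the Eisenstein/cuspidal decomposition with both $\mathrm{Res}$ and $\mathrm{Res}'$, but this follows from the canonical nature of Harder's section $i$ of $\mathrm{res}_\partial$ in \eqref{eqn:boundaryrestriction}, which is compatible with pullback along finite-index inclusions of arithmetic subgroups. No further computation is needed.
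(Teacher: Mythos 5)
Your proposal is correct and follows essentially the same route as the paper: the corollary there is literally a summary of the preceding material, combining the injectivity on the cuspidal part from Lemma~\ref{lem:IharaLemma} with the injectivity on the Eisenstein part deduced from the cusp-counting argument and \eqref{eqn:identifyingcohomology}, using that $\mathrm{Res}$ and $\mathrm{Res}'$ respect the decomposition \eqref{eq:cusp + eis}. No gaps; your closing remark on the compatibility of the splitting with restriction is exactly the point the paper handles by citing the analogous argument in the weight-$2$ literature.
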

\begin{proposition} \label{prop:appendixmayervietoris}
There is a Mayer--Vietoris exact sequence
\begin{multline} \label{eqn:mayervietors} 0 \rightarrow \tupH^{0}(\Gamma,V_{k,k}^{\vee}) \rightarrow [\tupH^{0}(\Gamma_{0}(\cM),V_{k,k}^{\vee}) \oplus \tupH^{0}(\Gamma_{0}'(\cM),V_{k,k}^{\vee})] \rightarrow \tupH^{0}(\Gamma_{0}(\cN),V_{k,k}^{\vee}) \xrightarrow{\epsilon_1}  \\ \tupH^{1}(\Gamma,V_{k,k}^{\vee}) \xrightarrow{\epsilon_2} [\tupH^{1}(\Gamma_{0}(\cM),V_{k,k}^{\vee})  \oplus  \tupH^{1}(\Gamma_{0}'(\cM),V_{k,k}^{\vee})] \xrightarrow{\mathrm{Res}+\mathrm{Res'}} \tupH^{1}(\Gamma_{0}(\cN),V_{k,k}^{\vee}) \rightarrow \cdots
\end{multline}
\end{proposition}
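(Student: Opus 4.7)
The plan is to realise $\Gamma$ as an amalgamated free product using Bass--Serre theory, from which the Mayer--Vietoris sequence follows by standard arguments in group cohomology. The action of $\Gamma$ on the Bruhat--Tits tree $\cT_\pri$ is the right geometric object: by Proposition~\ref{prop:transitive}, $U$ acts transitively on each of $\vertices^\pm(\cT_\pri)$ and $\edges^\pm(\cT_\pri)$, and the analogous transitivity should pass to $\Gamma = U \cap \PGLt(F)$, so that the quotient graph $\Gamma\backslash\cT_\pri$ is a single edge between two distinct vertices.

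With this picture in hand, I would identify the stabilizers of a fundamental domain. The stabilizer of the standard vertex $v_*$ in $\PGLt(F_\pri)$ is $\PGLt(\cO_\pri)$, and intersecting with $\Gamma$ (whose global entries lie in $\cO_F[1/\pri]$ and satisfy the congruence modulo $\cM$) yields precisely $\Gamma_0(\cM)$. Choosing the element $\alpha\in\widetilde{\Gamma}/\Gamma$ of Def.~\ref{defn:p-old} so that $\alpha v_*$ is the other endpoint of $e_*$, the stabilizer of $\alpha v_*$ is $\alpha\Gamma_0(\cM)\alpha^{-1}=\Gamma_0'(\cM)$. The stabilizer of the edge $e_*$ is the intersection $\Gamma_0(\cM)\cap\Gamma_0'(\cM)=\Gamma_0(\cN)$, which is exactly the level group appearing in the statement.

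By the fundamental theorem of Bass--Serre theory (Serre, \emph{Trees}, Ch.~I, Thm.~6), the above orbit and stabilizer data give an isomorphism
\[
\Gamma \;\cong\; \Gamma_0(\cM) *_{\Gamma_0(\cN)} \Gamma_0'(\cM).
\]
The long exact sequence~\eqref{eqn:mayervietors} is then the standard Mayer--Vietoris sequence in group cohomology associated to an amalgamated free product (see Serre, \emph{Trees}, Ch.~II \S2.8, or Brown, \emph{Cohomology of Groups}, VII.9), applied to the $\Gamma$-module $V_{k,k}^\vee$. The maps $\epsilon_2$ and $\mathrm{Res}+\mathrm{Res}'$ are the restriction maps induced by the inclusions $\Gamma_0(\cN)\subset\Gamma_0(\cM),\Gamma_0'(\cM)\subset\Gamma$ up to sign conventions.

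The main obstacle is ensuring that $\Gamma$ itself (as opposed to its adelic overgroup $U$) acts transitively on $\vertices^\pm$ and $\edges^\pm$, so that the quotient graph really consists of one edge. This is a strong approximation argument: the $\pri$-arithmetic subgroup $\Gamma$ surjects onto each $U$-orbit because the congruence conditions defining $\Gamma_0(\cM)$ involve only primes away from $\pri$. Once this orbit structure is verified, the remaining content is purely formal Bass--Serre/homological algebra.
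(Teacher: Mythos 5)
Your argument is correct and is essentially the paper's proof: the paper simply cites Serre's \emph{Trees} (Prop.~13, Ch.~II.2.8) applied to $M=V_{k,k}^{\vee}$, which packages exactly the content you spell out, namely the action of $\Gamma$ on $\cT_{\pri}$ with quotient a segment, the identification of the vertex and edge stabilisers with $\Gamma_0(\cM)$, $\Gamma_0'(\cM)$ and $\Gamma_0(\cN)$, and the resulting amalgam Mayer--Vietoris sequence. Your added remark that one must check transitivity of $\Gamma$ itself (not just $U$) on $\vertices^{\pm}$ and $\edges^{\pm}$ via strong approximation is the right point to verify and is handled by the same standard Serre/Ihara-type results.
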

\begin{proof}
This is \cite[Prop.~ 13, Chapter II.2.8]{Ser80} applied to the $\Gamma$-module $M = V_{k,k}^{\vee}$.
\end{proof}
\begin{corollary} \label{cor:vanishingofH1} $\tupH_{1}(\Gamma, V_{k,k}) = 0$.
\end{corollary}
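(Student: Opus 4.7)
The plan is to dualize and then apply the Mayer--Vietoris sequence of Proposition~\ref{prop:appendixmayervietoris}. Since $V_{k,k}$ is a finite-dimensional $L$-vector space, a standard duality argument gives a natural isomorphism $\tupH_{1}(\Gamma,V_{k,k}) \cong \tupH^{1}(\Gamma,V_{k,k}^{\vee})^{\vee}$: if $P_{\bullet} \to L$ is any projective $L[\Gamma]$-resolution of the trivial module, then adjunction identifies $\mathrm{Hom}_{L[\Gamma]}(P_{\bullet},V_{k,k}^{\vee})$ with $\mathrm{Hom}_{L}(V_{k,k}\otimes_{L[\Gamma]} P_{\bullet},L)$, and exactness of $\mathrm{Hom}_{L}(-,L)$ over the field $L$ lets us commute cohomology with dualization. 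Hence it suffices to prove $\tupH^{1}(\Gamma,V_{k,k}^{\vee}) = 0$.

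For this, feed $V_{k,k}^{\vee}$ into the long exact sequence (\ref{eqn:mayervietors}). Corollary~\ref{cor:injectivitylevelchange} says that $\mathrm{Res}+\mathrm{Res}'$ is injective on $\tupH^{1}$'s, so exactness forces $\epsilon_{2} = 0$. Consequently $\tupH^{1}(\Gamma,V_{k,k}^{\vee})$ equals the image of $\epsilon_{1}$, i.e.\ the cokernel of
\[
\tupH^{0}(\Gamma_{0}(\cM),V_{k,k}^{\vee})\,\oplus\, \tupH^{0}(\Gamma_{0}'(\cM),V_{k,k}^{\vee}) \;\longrightarrow\; \tupH^{0}(\Gamma_{0}(\cN),V_{k,k}^{\vee}).
\]

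Finally, split into two cases. If $k > 0$, the Hecke-stable decomposition $\tupH^{0} = \tupH^{0}_{\mathrm{cusp}} \oplus \tupH^{0}_{\mathrm{Eis}}$ of \eqref{eq:cusp + eis} together with Theorem~\ref{thm:eichlershimuraharder} kills all three $\tupH^{0}$'s: part (i) gives $\tupH^{0}_{\mathrm{cusp}} = 0$, while part (iii) gives $\tupH^{0}_{\mathrm{Eis}} = 0$ for $k > 0$. The cokernel is then trivially zero. If $k = 0$, then $V_{0,0} = L$ is the trivial $\Gamma$-module and $\tupH^{0}(\Gamma_{0}(\ast),L) = L$ for each of our congruence subgroups; the Mayer--Vietoris connecting map $L \oplus L \to L$ is the signed sum $(a,b)\mapsto a-b$, which is surjective, so the cokernel again vanishes. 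In both cases $\tupH^{1}(\Gamma,V_{k,k}^{\vee}) = 0$, yielding the corollary. The only substantive ingredient is Corollary~\ref{cor:injectivitylevelchange} (which rests on the Jacquet--Shalika bound in Lemma~\ref{lem:IharaLemma}); everything else is Mayer--Vietoris bookkeeping, so no serious obstacle is expected.
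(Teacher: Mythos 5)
Your proof is correct and follows essentially the same route as the paper's: dualize via the universal coefficient theorem, feed into the Mayer--Vietoris sequence of Proposition~\ref{prop:appendixmayervietoris}, and use Corollary~\ref{cor:injectivitylevelchange} together with the vanishing (for $k>0$) or surjectivity (for $k=0$) at the $\tupH^0$ level. If anything, your write-up of the $k=0$ case is slightly more explicit than the paper's, which cites only ``exactness'' there although it implicitly also needs Corollary~\ref{cor:injectivitylevelchange}; one small terminological nit is that the map $L\oplus L\to L$ you invoke is the restriction map in the Mayer--Vietoris sequence, not a connecting map, but this does not affect the argument.
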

\begin{proof}
By the universal coefficient theorem, we have an identification
	\begin{equation} \label{eqn:uct2}
	\tupH_{1}(\Gamma,V_{k,k}) \cong \tupH^{1}(\Gamma,V_{k,k}^{\vee})^{\vee}, 
	\end{equation}
so it suffices to prove $\tupH^1(\Gamma,V_{k,k}^\vee) = 0$.
For $k=0$ we have that $V_{k,k}^{\vee}=L$ is trivial as a $\Gamma$-module. Thus $\tupH^{0}(\Gamma,V_{k,k}^{\vee}) = \tupH^{0}(\Gamma_{0}(\cM),V_{k,k}^{\vee}) = \tupH^{0}(\Gamma_{0}'(\cM),V_{k,k}^{\vee}) = \tupH^{0}(\Gamma_{0}(\cN),V_{k,k}^{\vee}) = L$. The vanishing of $\tupH^{1}(\Gamma,V_{k,k}^{\vee})$ now follows from the exactness of (\ref{eqn:mayervietors}). For $k > 0$, by Theorem~\ref{thm:eichlershimuraharder} above, $\tupH^{0}(\Gamma_{0}(\cN), V_{k,k}^{\vee}) = 0$. By the exactness of (\ref{eqn:mayervietors}) we have $\mathrm{ker}(\epsilon_2) = 0$. By Corollary~\ref{cor:injectivitylevelchange}, we know that $\mathrm{ker}(\mathrm{Res}\oplus\mathrm{Res'}) = 0$. But once again by the exactness of (\ref{eqn:mayervietors}), we have $\mathrm{Im}(\epsilon_2) = 0$ which implies the vanishing of $\tupH^{1}(\Gamma,V_{k,k}^{\vee})$. 
 This completes the proof of Theorem~\ref{thm:padicabeljacobiimageofdarmoncycles}.
\end{proof}

\bibliographystyle{alpha}
\bibliography{references}
\end{document}